\documentclass{amsart}          % Nicer than default article style:  less
                                % flashy headings, etc.

\usepackage{amsmath,amsthm}     % Handy math stuff, theorem environments.
\usepackage{amssymb,bbm}            % Fancy math symbols.
\usepackage{euscript}           % Nice script font.
\usepackage{array,enumerate,calc,graphicx}
\usepackage[matrix,arrow,curve,frame]{xy}    % XY-pic diagram pac
\usepackage{tikz-cd}
\usepackage{bbm}
%\usepackage{tikz}
%\usetikzlibrary{arrows}

\xymatrixcolsep{1.9pc}                          % Adjust size of diagrams.
\xymatrixrowsep{1.9pc}
\newdir{ >}{{}*!/-5pt/\dir{>}}                  % Make better tailed arrows

% Don't force the bottoms of the pages to be at the same spot:
\raggedbottom

% Allow worse line breaks.  We'll get fewer ``underfull hbox'' messages.
\tolerance=1500

% Put sections, but not subsections, into the table of contents:
\setcounter{tocdepth}{1}
% Number sections, but not subsections.
\setcounter{secnumdepth}{2}

% For temporary questions.  For example, \margnote{This is something
% I'm confused about.} puts that message in the margin.

% Some theorem-like environments, all numbered together starting at 1
% in each section.

% The default \theoremstyle is bold headings and italic body text.
\newtheorem{thm}[subsection]{Theorem}
\newtheorem{defn}[subsection]{Definition}
\newtheorem{prop}[subsection]{Proposition}

\newtheorem{cor}[subsection]{Corollary}
\newtheorem{lemma}[subsection]{Lemma}

\theoremstyle{definition}  % Bold headings and Roman body text.
\newtheorem{example}[subsection]{Example}

\newtheorem{remark}[subsection]{Remark}
\newtheorem{notation}[subsection]{Notation}

  {\end{list}}

\newcommand{\dfn}{\textbf} % Make defined words bold.

\newcommand{\mdfn}[1]{\dfn{\mathversion{bold}#1}} % Even make math bold

% Various commands that are useful.  Please add your own.

% Topologists smash /\ and wedge \/.
\newcommand{\Smash}             {\wedge}

\newcommand{\tens}              {\otimes}               %tensor

\newcommand{\iso}               {\cong}  
          %`half-smash over Sigma_n'

\newcommand{\cat}{\EuScript}    % Use \EuScript to name a category.
\newcommand{\cA}{{\cat A}}      % Only seems to work for caps, and only gets
      % first letter.
\newcommand{\cC}{{\cat C}}
\newcommand{\cD}{{\cat D}}

\newcommand{\cF}{{\cat F}}

\newcommand{\cH}{{\cat H}}

\newcommand{\cO}{{\cat O}}
\newcommand{\cP}{{\cat P}}

\newcommand{\GTop}{G{\cat Top}}
\newcommand{\GSpectra}{G{\cat Spectra}}
\newcommand{\Set}{{\cat Set}}
\newcommand{\GSet}{G\Set}
\newcommand{\fGSet}{G\Set_{\text{\rm fin}}}

\renewcommand{\cA}{{\mathcal A}}
\renewcommand{\cC}{{\mathcal C}}

   % Use \EuScript to name a universe.

\newcommand{\field}[1]  {\mathbb #1} % Use blackboard bold for these sets
\newcommand{\A}         {\field A}
\newcommand{\F}         {\field F}
\newcommand{\R}         {\field R}

\newcommand{\Z}         {\field Z}
\newcommand{\C}         {\field C}
\newcommand{\Q}         {\field Q}

\DeclareMathOperator*{\im}{Im}

\DeclareMathOperator{\Hom}{Hom}

\DeclareMathOperator{\End}{End}

\DeclareMathOperator{\chara}{char}

\DeclareMathOperator{\ob}{ob}
\DeclareMathOperator{\Spec}{Spec}

\newcommand{\ra}{\rightarrow}                   % right arrow
\newcommand{\lra}{\longrightarrow}              % long right arrow
\newcommand{\la}{\leftarrow}                    % left arrow
\newcommand{\lla}{\longleftarrow}               % long left arrow
\newcommand{\llra}[1]{\stackrel{#1}{\lra}}      % labeled long right
					        % arrow
\newcommand{\llla}[1]{\stackrel{#1}{\lla}}      % labeled long right
					        % arrow

                   % weak equivalence

              % cofibration
           % fibration

\newcommand{\inc}{\hookrightarrow}              % inclusion
           % double arrow for eqlizer

                 % equivalence/isomorphism

\newcommand{\blank}{-}                          % A hyphen, as in
					        % (-)xV
                          % A hyphen, as in (-)xV
                            % The identity functor
\newcommand{\id}{id}                            % The identity functor

             %\norm{x} gives |x|

% These commands are for the period and comma in the lower right entry of
% a diagram.  They put the punctuation 2 pts to the right, but make
% TeX (and hence the diagram package) unaware of the extra width
% of that entry.

%\newcommand{\re}{Re}

\newcommand{\adjoint}{\rightleftarrows}

\numberwithin{equation}{section}

\newcommand{\Ab}{\cat Ab}

\newcommand{\CRing}{\cat{C}omm\cat{R}ing}

\newcommand{\ev}{ev}
\newcommand{\cev}{cev}

\newcommand{\pres}[1]{\!\!\phantom{|}_{*\,}\!{#1}}

% The following code corrects the problem with equation numbering.

\newenvironment{myequation}
  {\addtocounter{subsection}{1}\begin{eqnarray}}
  {\end{eqnarray}$\!\!$}

\DeclareMathOperator{\GWC}{GWC}
\DeclareMathOperator{\Aut}{Aut}
\DeclareMathOperator{\Gal}{Gal}
\DeclareMathOperator{\tr}{tr}
\DeclareMathOperator{\GW}{GW}
\DeclareMathOperator{\Mackey}{{\mathcal B}{\mathit u}{\mathit
    r}{\mathit n}}
\DeclareMathOperator{\Aff}{{\cat Aff}}
\newcommand{\Or}{Or}

\newcommand{\piS}{\pi_\Theta}

\DeclareMathOperator{\fEt}{fEt}

\newcommand{\Aet}{\cA_{\fEt}}
\newcommand{\fad}{(ad)}
\DeclareMathOperator{\oalg}{alg}
\newcommand{\alg}[1]{{#1}\!-\!\oalg}
\newcommand{\fGset}{{\text{fin}G\Set}}
\newcommand{\qf}[1]{\langle{#1}\rangle}

\begin{document}

\title{Gysin functors and the Grothendieck-Witt category,
part I}

\author{Daniel Dugger}
\address{Department of Mathematics\\ University of Oregon\\ Eugene, OR
97403} 

\email{ddugger@math.uoregon.edu}

\begin{abstract}
Fix a field $k$.  Consider the motivic stable homotopy category over
$k$, and restrict to the full subcategory whose objects are the
suspension spectra of separable field extensions of $k$.  We give
an algebraic description of this category, identifying it with a
construction we call the Grothendieck-Witt category.  In this first of
two papers  we develop the general categorical machinery that
describes this situation: that of Gysin
functors and their associated categories of correspondences.  We prove
a ``recognition theorem'' for these
correspondence categories, and develop results concerning their
structure.
\end{abstract}

\maketitle

\tableofcontents

\section{Introduction}
\label{se:intro}
Fix a ground field $k$.  In this paper we describe a category
$\GWC(k)$, called the \dfn{Grothendieck-Witt category} of $k$, whose
objects are the finite separable field extensions of $k$.  The
morphisms are a Grothendieck group of certain kinds of
``correspondences'' built up from bilinear forms, and there is an
intrinsic notion of composition.  We then generalize this situation
into the theory of what we call Gysin functors and their associated categories of
correspondences.  We prove several results about the general structure of such
categories.

\medskip

To further explain the ideas and motivation of this paper we take a
brief detour into equivariant homotopy theory.  Let $G$ be a finite
group, and let $\GTop$ be the category of $G$-spaces and equivariant
maps.  We regard $\GSet$, the category of $G$-sets, as the full
subcategory of $\GTop$ consisting of the discrete $G$-spaces.  
The \dfn{orbit category} $\Or(G)$ of $G$ is the full subcategory of
$\GSet$ consisting of the $G$-sets on 
which $G$ acts transitively.  Every object in $\Or(G)$ is isomorphic
to a quotient $G/H$, for some subgroup $H$.

Next consider the stabilization functor $\Sigma^\infty\colon \GTop\ra
\GSpectra$
from $G$-spaces to genuine $G$-spectra (the version of $G$-spectra
where representation spheres are invertible).  When restricted to $\GSet$ this
map is an embedding, but it is not full.  The full subcategory of
$\GSpectra$ whose objects are $\Sigma^\infty \cO_+$ for $\cO$ a $G$-set
is called the \dfn{stable category} of $G$-sets, and
denoted $\GSet^{st}$.  We will actually focus on $\fGSet^{st}$, where
we restrict $\cO$ to be a finite $G$-set.
The full subcategory of $\fGSet^{st}$ consisting
of the objects $\Sigma^\infty(G/H)_+$ is called the \dfn{stable orbit category}.

There are two common ways of describing $\fGSet^{st}$:

\begin{enumerate}[(1)]
\item Given two finite $G$-sets $\cO_1$ and $\cO_2$, define a \dfn{span} from
$\cO_1$ to $\cO_2$ to be a diagram
\[ \xymatrix{
&P \ar[dl]\ar[dr]\\
\cO_2 && \cO_1
}
\]
in the category of finite $G$-sets.  A map between spans is a map of diagrams
that is the identity on $\cO_1$ and $\cO_2$.  This category has a monoidal
structure given by disjoint union in the ``$P$''-variable.  Define
\mdfn{$\Mackey(\cO_1,\cO_2)$} to be the Grothendieck group of isomorphisms
classes of spans from $\cO_1$ to $\cO_2$, with respect to this disjoint union
operation.

Note that we will sometimes refer to spans as ``correspondences'', as
that terminology is often used in geometric settings.  

If we have three finite $G$-sets $\cO_1$, $\cO_2$, and $\cO_3$ then we can
define a
composition of spans via the pullback operation  shown in the following diagram:
\[ \xymatrix{
&& Q\times_{\cO_2} P\ar@{.>}[dr]\ar@{.>}[dl]\\
  &Q \ar[dl]\ar[dr] && P\ar[dr]\ar[dl]\\
\cO_3 && \cO_2 && \cO_1.
}
\]
This operation induces a map
\[ \Mackey(\cO_2,\cO_3)\times \Mackey(\cO_1,\cO_2) \ra
\Mackey(\cO_1,\cO_3)
\]
which is readily checked to be unital and associative.  So we have
defined a category $\Mackey$ whose objects are the $G$-orbits.  
This is usually called the \dfn{Burnside category} of $G$-sets.

Here are some things to take note of:

\begin{enumerate}[(a)]
\item There is a functor $R\colon \fGSet\ra \Mackey$ that is the
identity on objects and sends a map $f\colon \cO_1\ra \cO_2$ to the
span $[\cO_2 \llla{f}\ \cO_1\llra{\id} \cO_1]$.

\item The category $\Mackey$ has a duality anti-automorphism $(\blank)^*$
which is the identity on objects, and on morphisms sends a span $[\cO_2\la P \ra
\cO_1]$ to the similar span $[\cO_1\la P\ra \cO_2]$ obtained by
reversing the order of the maps.  The duality functor is an
isomorphism
\[ (\blank)^*\colon \Mackey^{op}\ra \Mackey.
\]

\item In particular, setting $I=(\blank)^*\circ R$ gives a functor
$I\colon \fGSet^{op}\ra \Mackey$.  If $f\colon \cO_1\ra \cO_2$ then
$I(f)$ is the span $[\cO_1 \llla{\id} \cO_1 \llra{f} \cO_2]$.  
\end{enumerate}

\medskip

If $\cA$ is an additive category then additive functors $\Mackey^{op}\ra \cA$
are the same as what are usually called {\it Mackey functors}.  (One could also
identify Mackey functors with additive functors $\Mackey\ra \cA$, since
$\Mackey$ is self-dual; however, our notation for the $R$ and $I$ maps fits
better with the contravariant option).

It is a classical theorem (perhaps a folk theorem) 
that $\Mackey$ is isomorphic to the stable category of
finite $G$-sets.

\item The stable orbit category $\Or(G)^{st}$ can also be described in terms of
generators and relations.
 This is the free additive category whose
objects are the transitive $G$-sets and whose morphisms are generated
by the maps $R_f\colon \cO_1\ra \cO_2$ and $I_f\colon \cO_2\ra \cO_1$
for every map of  $G$-sets $f\colon \cO_1\ra \cO_2$.  The morphisms
satisfy the relations:
\begin{enumerate}[(i)]
\item $R_{gf}=R_g\circ R_f$;
\item $I_{gf}=I_f\circ I_g$;
\item Given a pullback diagram of $G$-sets
\[ \xymatrix{
 P \ar[d]_{p}\ar[r]^{f} & \cO_3 \ar[d]^{q}\\
\cO_1 \ar[r]^{g} & \cO_2
}
\]
where the actions on $\cO_1$, $\cO_2$, and $\cO_3$ are transitive,
write $P=\coprod_i X_i$ where each $X_i$ is a transitive $G$-set.
Then
\[ I_g \circ R_q = \sum_i R_{p_i}\circ I_{f_i}\]
where $f_i$ and $p_i$ are the restrictions of $f$ and $p$ to $X_i$.
\end{enumerate}
It is again a classical theorem that this category, defined in terms of generators and
relations, is isomorphic to the stable orbit category.
\end{enumerate}

Now let us return to our original setting, where
$k$ is a fixed ground field.  
Keeping the above discussion in mind, the point of this series of papers is to
examine the full subcategory of the motivic stable homotopy category
over $k$ whose objects are the suspension spectra of fields.  This is vaguely
analogous to the stable orbit category (although in the case of
$G$-spectra the orbits generate the category, whereas field spectra do
not generate the category in the motivic setting).  
Our goal is
to give descriptions of this category that are analogs of
(1) and (2).  To give a sense of this in the first case, the
Grothendieck-Witt category of $k$ is defined to be the category
$\GWC(k)$ whose objects are $\Spec E$ for $E$ a finite, separable
field extension of $k$.  The morphisms from $\Spec E$ to $\Spec F$ are
the Grothendieck group $GW(F\tens_k E)$ of quadratic spaces over
$F\tens_k E$ (see Section~\ref{se:GWcat} for details).  The definition of
composition is a little too cumbersome to be included in this
introduction, but it mimics the composition we saw in (1) above.

\medskip

Morel \cite{Mo}  proved that if $k$ is perfect and 
$F/k$ is a separable field extension then
\[ [\Sigma^\infty (\Spec F)_+,S]\iso \GW(F)
\]
where $S$ is the motivic sphere spectrum and $[\blank,\blank]$ denotes
maps in the motivic stable homotopy category of smooth $k$-schemes.  
If $J/k$ is another
separable extension one can then argue that
\begin{align*}
 [\Sigma^\infty (\Spec F)_+,\Sigma^\infty(\Spec J)_+]&\iso
[\Sigma^\infty (\Spec F)_+\Smash \Sigma^\infty(\Spec J)_+,S]\\
& \iso
[\Sigma^\infty (\Spec (F\tens_k J))_+,S] \\
&\iso \GW(F\tens_k J)
\end{align*}
where the first isomorphism uses a self-duality $\Sigma^\infty(\Spec
J)_+\iso \cF(\Sigma^\infty(\Spec J)_+,S)$ and the last isomorphism is
the aforementioned one of Morel (using that $F\tens_k J$ decomposes as
a product of separable field extensions of $k$).  The self-duality is
dealt with in the appendix to \cite{H}, and in the equivariant context
it is in modern times usually couched in the machinery of the Wirthm\"uller
isomorphism (cf. \cite{Ma2}, for example).

Accepting the above computation, it remains to compute the composition
in the motivic stable homotopy category and relate it to the
appropriate pairing of Grothendieck-Witt groups.  The present paper
exists partly because attempting to do this by ad hoc methods
proved unwieldy.

In the narrative we provide here, everything comes
down to the existence of transfer maps.  Transfer maps coupled with
diagonal maps give rise to duality structures, and quite general
categorical computations show that any reasonable category with this
kind of structure may be described by a ``correspondence-like''
description of composition.

\medskip 

Let us now explain the results in a bit more detail.
Let $\cC$ be a finitary lextensive category (see
Section~\ref{se:Gysin-func}, but understand
that this is basically just a category where coproducts behave nicely
with respect to pullbacks).  A \dfn{Gysin functor} on $\cC$ is an
assignment $X\mapsto E(X)$ from $\ob(\cC)$ to commutative rings,
together with pullback and pushforward maps satisfying certain
compatibility properties.  Given this situation, one can 
construct a \dfn{category of correspondences} $\cC_E$ where the object
set is $\ob(\cC)$, maps from $X$ to $Y$ are the abelian group
$E(Y\times X)$, and composition is obtained by a familiar formula
using the pullback and pushforward maps.  The category $\cC_E$ is
enriched over abelian groups, is closed symmetric monoidal, and has
the property that all objects are self-dual.

Now suppose $\cH$ 
is a closed tensor category (additive category with compatible
symmetric monoidal structure) with tensor $\tens$ and unit
$S$.  
Suppose given functors $R\colon \cC\ra \cH$ and $I\colon \cC^{op}\ra
\cH$ satisfying some reasonable hypotheses (see Section~\ref{se:recon}).  For
$f\colon X\ra Y$ in $\cC$ we think of $Rf$ as the ``regular'' map
associated to $f$ in $\cH$, whereas $If$ is an associated transfer map.
The
prototype for this situation is where $\cH$ is the genuine
$G$-equivariant
stable homotopy
category, $\cC$ is the category of finite $G$-sets,
$R(X)=I(X)=\Sigma^\infty(X_+)$,  $Rf$ is the usual map induced by
$f\colon X\ra Y$, and $If$ is the corresponding transfer map.  

Write $\pi^0$ for the functor $\cC^{op}\ra \Ab$ 
given by $\pi^0(X)=\cH(RX,S)$.  This inherits the structure of a Gysin
functor, and we prove the following:

\begin{thm}
Under mild hypotheses, the category of
correspondences $\cC_{(\pi^0)}$ is equivalent to the full subcategory
of $\cH$ whose objects lie in the image of $R$.
\end{thm}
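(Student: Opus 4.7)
The plan is to construct a functor $\Phi\colon \cC_{(\pi^0)} \to \cH$ that is the identity on objects (sending $X$ to $RX$) and is a bijection on morphism sets; its image is the targeted full subcategory.

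First, I would extract from the hypotheses on $R$ that it is strong symmetric monoidal with $R(Y \times X) \iso RY \tens RX$ and $R(*) = S$. The crucial structural observation is that each $RY$ is Spanier--Whitehead self-dual in $\cH$, with unit and counit constructed from the transfer and regular maps applied to the diagonal $\Delta_Y$ and the terminal map $\tau_Y\colon Y \to *$:
\begin{align*}
\eta_Y &\colon S = R(*) \xrightarrow{\,I\tau_Y\,} RY \xrightarrow{\,R\Delta_Y\,} R(Y \times Y) \iso RY \tens RY, \\
\epsilon_Y &\colon RY \tens RY \iso R(Y \times Y) \xrightarrow{\,I\Delta_Y\,} RY \xrightarrow{\,R\tau_Y\,} R(*) = S.
\end{align*}
The triangle identities reduce, via the base-change compatibility between $R$ and $I$ applied to the two pullback squares built from $\Delta_Y$ and $\tau_Y$, to trivial identities about $I(\id)$ together with the functoriality of $R$ and $I$.

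With self-duality in hand, define $\Phi$ on morphisms by the chain of natural bijections
\[ \pi^0(Y \times X) = \cH\bigl(R(Y \times X),\, S\bigr) \iso \cH(RY \tens RX,\, S) \iso \cH(RX,\, RY), \]
where the last step applies the self-duality of $RY$. Fully faithfulness is tautological by construction, and essential surjectivity onto the full subcategory on the image of $R$ holds by fiat.

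The main obstacle is checking that $\Phi$ respects composition. Given $\alpha \in \pi^0(Y \times X)$ and $\beta \in \pi^0(Z \times Y)$, their composite in $\cC_{(\pi^0)}$ is the pushforward along $p_{ZX}\colon Z \times Y \times X \to Z \times X$ of the product $p_{ZY}^*\beta \cdot p_{YX}^*\alpha$. After translating through $\Phi$, pullbacks become pre-compositions with the appropriate $R(-)$ while the pushforward becomes pre-composition with a transfer $I(-)$. Matching this against honest composition in $\cH$ is a diagram chase driven by (a) the projection formula relating $I$ with products, (b) the compatibility of $\eta_Y$ and $\epsilon_Y$ with the symmetric monoidal structure, and (c) the base-change identity for the pullback square built from the diagonal of $Y$, which encodes the interaction of the self-duality with the $I$--$R$ exchange. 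I expect this verification to occupy the bulk of the argument, while the duality construction and the definition of $\Phi$ are comparatively formal.
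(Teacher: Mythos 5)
Your proposal follows the paper's own route: the self-duality of each $RX$ with unit $R\Delta\circ I\pi$ and counit $R\pi\circ I\Delta$ is Proposition~\ref{pr:I-dual}, the identification $\cH(RX,RY)\iso\cH(RY\tens RX,S)$ coming from that self-duality is Proposition~\ref{pr:recon-1} (the auxiliary category $\cD^{\fad}$), and the verification that composition is preserved is exactly the diagram chase in the proof of Theorem~\ref{th:recon}, driven by the monoidality of $I$ (axiom (3) of a Gysin schema) and the base-change axiom rather than a literal projection formula. So the argument is essentially identical to the paper's, differing only in how the comparison is packaged.
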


That is, we prove that one can reconstruct the appropriate subcategory
of $\cH$ as the category of correspondences associated to the Gysin
functor $\pi^0$.  See Theorem~\ref{th:recon} for a precise version of
the above theorem.

\bigskip

The second result of this paper concerns the structure of the category
of correspondences $\cC_E$ for a general Gysin functor $E$.  In the
Burnside category of a finite group, there are special collections of
maps $Rf$ and $Ig$ and every map in the category may be written as a
composite $Rf\circ Ig$.  There are also rules for rewriting
compositions $If\circ Rg$ in the above form.  In the case of a general
Gysin functor, there are {\it three\/} collections of special maps, elements
of which are written $Rf$,
$Ig$, and $Da$ where $f$ and $g$ are maps in $\cC$ and $a\in E(X)$ for
some object $X$ in $\cC$.  We prove the following:

\begin{thm}
\label{th:intro-RDI}
Every map in $\cC_E$ can be written as a sum of maps $Rf\circ
Da\circ Ig$.  Other composites of the $R-D-I$ maps can be
rewritten in this
form 
using the rules
\begin{enumerate}[(a)]
\item $Da\circ Rf=Rf\circ D(f^*a)$,
\item $If\circ Da=D(f^*a)\circ If$,
\item $If\circ Rg=Rp\circ Iq$ where $p$ and $q$ are the maps in the
pullback diagram
\[ \xymatrix{
P \ar[r]^q\ar[d]_p & A \ar[d]^g \\
B \ar[r]^f & C
}
\]
inside the category $\cC$.  
\end{enumerate}
Moreover, for a map $f\colon X\ra Y$ in $\cC$ and $a\in E(X)$ one has
$Rf\circ Da\circ If=D(f_!(a))$.  
\end{thm}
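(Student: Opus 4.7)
The plan is to unpack everything in terms of the explicit description of the correspondence category. A morphism $X\to Y$ in $\cC_E$ is an element of $E(Y\times X)$, and the composite of $\alpha\in E(B\times A)$ with $\gamma\in E(C\times B)$ is $\pi_!\bigl(\pi_{CB}^{*}\gamma\cdot\pi_{BA}^{*}\alpha\bigr)\in E(C\times A)$, where $\pi\colon C\times B\times A\to C\times A$ and $\pi_{CB},\pi_{BA}$ are the evident projections. The first step is to identify the three types of generators as specific elements: for $f\colon X\to Y$ in $\cC$, the morphism $Rf$ is $(f,\id)_!(1)\in E(Y\times X)$ (pushforward along the graph); the morphism $If$ is $(\id,f)_!(1)\in E(X\times Y)$ (pushforward along the ``op-graph''); and for $a\in E(X)$, the endomorphism $Da$ is $\Delta_!(a)\in E(X\times X)$.

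For the decomposition statement, given $\alpha\in E(Y\times X)$ and the projections $\pi_X,\pi_Y$ out of $Y\times X$, I would show $\alpha=R\pi_Y\circ D\alpha\circ I\pi_X$ by direct computation: expanding twice via the composition formula and applying base change against the pullback squares relating the diagonal of $Y\times X$ to the graphs of $\pi_X$ and $\pi_Y$ collapses the triple composite back to $\alpha$. A single term suffices for one correspondence; the ``sum'' in the theorem records only that the base change in rule (c) can introduce sums when the relevant pullback in $\cC$ decomposes into multiple components.

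I would then verify rules (a)--(c) individually. Rule (c) is essentially the Gysin-functor base change axiom applied directly: expanding $If\circ Rg$ via the composition formula forces the pullback of the two graphs inside a triple product, and base change identifies the result with the sum $\sum_i R_{p_i}\circ I_{f_i}$ indexed by the components of $P$. Rules (a) and (b) are applications of the projection formula $h_!(h^{*}c\cdot b)=c\cdot h_!(b)$: after expanding, say, both sides of $Da\circ Rf=Rf\circ D(f^{*}a)$ via the composition formula, each reduces to a single pushforward of a product involving $a$ and $(f,\id)_!(1)$, and the projection formula along the graph of $f$ produces the match. Rule (b) is essentially dual.

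The ``moreover'' clause $Rf\circ Da\circ If=D(f_!a)$ is the most substantive step and where I expect the main obstacle. Expanding the triple composite via the composition formula yields an element of $E(Y\times Y)$ obtained as a pushforward from $E(Y\times X\times Y)$, built out of $\Delta_!(a)$ together with the two graphs. The crucial observation is that pulling the two graph elements back inside $Y\times X\times Y$ produces the pullback $X\times_Y X$, so that applying base change against the pullback square of $f$ against itself, followed by the projection formula, rewrites the expression as the pushforward of $a$ along the composite $X\to Y\to Y\times Y$ where the last map is the diagonal of $Y$. Functoriality of $(-)_!$ then identifies this with $\Delta_!(f_!a)=D(f_!a)$. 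The bookkeeping of which projection pulls back to which, and verifying that the pullback squares are indeed the intended ones, is where the lextensive hypothesis on $\cC$ enters; the actual content is a single application of base change combined with the projection formula.
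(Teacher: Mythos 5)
Most of your proposal tracks the paper's own route: the paper also works from the explicit identifications $Rf=(A\ra B\times A)_!(1)$, $If=(A\ra A\times B)_!(1)$, $Da=\Delta_!(a)$, proves (a) and (b) as Proposition~\ref{pr:R-D}, (c) as Proposition~\ref{pr:Gysin->Cat}(h), and gets the decomposition from the closed formula $Rf\circ Da\circ Ig=\bigl((f\times g)\colon Z\ra Y\times X\bigr)_!(a)$ of Proposition~\ref{pr:triple}(a). Two small corrections to your bookkeeping: the step that actually matches the two sides of (a) and (b) is push-pull applied to the square exhibiting the graph of $f$ as the pullback of $\Delta_Y$ along $\id\times f$ (the projection formula alone only reduces each side to a single pushforward), and rule (c) as stated is the single term $Rp\circ Iq$ --- no sum over components of $P$ is involved; sums only enter later when one insists on decomposing into atomic objects, as in the Galois setting.

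The genuine error is in your treatment of the ``moreover'' clause, precisely the step you flag as the crux. The two graph classes you pull back into $E(Y\times X\times Y)$ do not intersect in $X\times_Y X$: on a point $(y,x,y')$ they impose the conditions $y=f(x)$ and $y'=f(x)$, with a single copy of $x$, so their fiber product is just $X$, mapping in by $x\mapsto (f(x),x,f(x))$. The square of $f$ against itself is the mechanism behind rule (c) --- it governs $If\circ Rf$, computed over $X\times Y\times X$, where the doubled middle-free variable really does produce $X\times_Y X$ --- and running base change against that square here would compute the wrong composite and would not lead to $D(f_!a)$. The correct argument is in fact much simpler than you anticipate: the same expansion you propose for the decomposition statement yields $Rf\circ Da\circ Ig=\bigl((f\times g)\colon Z\ra Y\times X\bigr)_!(a)$ for any span $Y\llla{f}Z\llra{g}X$ with $a\in E(Z)$; taking $g=f$ and using the identity $(f\times f)\circ\Delta_X=\Delta_Y\circ f$ in $\cC$ together with functoriality of $(\blank)_!$ gives $Rf\circ Da\circ If=(\Delta_Y)_!(f_!a)=D(f_!a)$, which is exactly the paper's proof via Proposition~\ref{pr:triple}(b). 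So the statement is fine and your general toolkit suffices, but the specific mechanism you describe for this step would fail as written and should be replaced by the functoriality argument above.
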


The following corollary is really just a reformulation of the theorem:

\begin{cor}
\label{co:intro-RDI}
Maps in $\cC_E(X,Y)$ can be represented by a
pair consisting of a span $[Y\llla{f} Z \llra{g} X]$ and an element $a\in
E(Z)$: this pair represents $Rf\circ Da\circ Ig$.  If a map in 
$\cC_E(U,X)$ is
represented by $[X\llla{f'} Z' \llra{g'} U,\ a'\in E(Z')]$ then the composite
is represented by the pullback span  
\[ \xymatrixcolsep{1pc}\xymatrixrowsep{1pc}\xymatrix{
&& P\ar[dl]_s\ar[dr]^t \\
& Z\ar[dl]_f \ar[dr]^g && Z'\ar[dl]_{f'}\ar[dr]^{g'} \\
Y && X && U.
}
\]
and the element $D\bigl ((s^*a)(t^*a')\bigr)\in E(P)$.  That is to
say,
\[ (Rf\circ Da\circ Ig)\circ (Rf'\circ Da'\circ Ig')=R(fs)\circ
D((s^*a)(t^*a'))\circ I(g't).\]  
Moreover, we have the extra relation
\[ 
\biggl [
Y \llla{f} Z \llra{f} Y, \ a\in E(Z) \biggr ]
=
\biggl [
Y \llla{\id} Y \llra{\id} Y, \ f_!(a)\in E(Y) \biggr ].
\]
\end{cor}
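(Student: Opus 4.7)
The plan is to observe that this corollary is essentially a bookkeeping exercise repackaging Theorem~\ref{th:intro-RDI} into span-with-decoration notation: all the substantive content has already been established. The representability statement is immediate, since a pair consisting of a span $[Y \llla{f} Z \llra{g} X]$ and an element $a \in E(Z)$ is by definition the data of a composite $Rf\circ Da \circ Ig$, and the theorem guarantees that every element of $\cC_E(X,Y)$ is a sum of such composites (with sums absorbed into the abelian-group-valued hom given by $E(Y\times X)$).

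For the composition formula, the only nontrivial juxtaposition in
\[
(Rf\circ Da\circ Ig)\circ (Rf'\circ Da'\circ Ig')
\]
is the internal piece $Ig\circ Rf'$. Form the pullback in $\cC$
\[
\xymatrix{
P \ar[r]^{t}\ar[d]_{s} & Z' \ar[d]^{f'} \\
Z \ar[r]^{g} & X,
}
\]
and apply rule (c) of Theorem~\ref{th:intro-RDI} to rewrite $Ig\circ Rf'=Rs\circ It$. Then rule (a) slides $Da$ rightward past $Rs$ to produce $Rs\circ D(s^*a)$, and rule (b) slides $It$ leftward past $Da'$ to produce $D(t^*a')\circ It$. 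Assembling the pieces with covariant functoriality of $R$, contravariant functoriality of $I$, and multiplicativity of $D$ on $E(P)$ (all of which are built into the Gysin-functor composition law in $\cC_E$), the composite collapses to
\[
R(fs)\circ D\bigl((s^*a)(t^*a')\bigr)\circ I(g't),
\]
which is exactly the claimed formula.

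The ``extra relation'' is a direct rephrasing of the last clause of Theorem~\ref{th:intro-RDI}: the decorated span $[Y \llla{f} Z \llra{f} Y,\, a]$ represents $Rf\circ Da\circ If$, which the theorem identifies with $D(f_!(a))$, and that morphism is in turn represented by $[Y\llla{\id} Y \llra{\id} Y,\, f_!(a)]$.

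I do not anticipate any real obstacle, as the genuine work—the normal-form decomposition, the three rewriting identities (a)--(c), and the projection-type identity $Rf\circ Da\circ If=D(f_!(a))$—is already in hand. The one point requiring minor care is to confirm that the pullback invoked by rule (c) coincides with the pullback labelled $P$ in the corollary's diagram, so that the element $(s^*a)(t^*a')$ lives in the correct ring $E(P)$; this matching is forced by the shapes of the two diagrams.
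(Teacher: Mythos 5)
Your argument is correct and is essentially the paper's own proof: the paper derives the corollary by exactly this repackaging of Theorem~\ref{th:intro-RDI}, rewriting $Ig\circ Rf'$ as $Rs\circ It$ via the pullback of $Z\llra{g} X \llla{f'} Z'$, sliding the $D$'s across $Rs$ and $It$ and using that $D$ is multiplicative to get $R(fs)\circ D((s^*a)(t^*a'))\circ I(g't)$, with the extra relation being the identity $Rf\circ Da\circ If=D(f_!(a))$. The only small refinement worth noting is that representability by a \emph{single} decorated span is immediate from the tautological span $Y\llla{\pi_1} Y\times X\llra{\pi_2} X$ with decoration the given element of $E(Y\times X)$ (this is how the paper phrases it), which is cleaner than appealing to sums of $RDI$ composites and then absorbing them.
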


If one assumes the category $\cC$ to have
some basic Galois-type properties (which model the behavior of the
category of $G$-sets) then explicit computations become easier.  For example,
one can prove the following:

\begin{prop}
Assume $\cC$ is a Galoisien category (see
Section~\ref{se:Gysin-Galois}), 
and let $X$ be
an object in $\cC$ that is Galois.  Then in $\cC_E$ one has
\[ \End_{\cC_E}(X)=
\widetilde{[\Aut_\cC(X)]}E(X)
\]
where on the right we have the twisted group ring whose elements are
finite sums $\sum_i [g_i]a_i$ with $g_i\in \Aut_\cC(X)$ and $a_i\in
E(X)$, and the multiplication is determined by the formula
\[ [g]a\cdot [h]b = [gh](h^*a\cdot b).
\]
(Here  $[g]a$ corresponds to the element $Rg\circ Da$).
\end{prop}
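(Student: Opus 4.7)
The plan is to identify $\End_{\cC_E}(X)$ first as an abelian group, via the Galois decomposition of $X\times X$, and then to read off the composition using the $R$-$D$-$I$ rules of Corollary~\ref{co:intro-RDI}. By definition of the correspondence category, $\End_{\cC_E}(X)=\cC_E(X,X)=E(X\times X)$. The key input from the Galoisien hypothesis is (as one expects from Section~\ref{se:Gysin-Galois}) a decomposition $X\times X \iso \coprod_{g\in \Aut_\cC(X)} X$ in which the copy of $X$ indexed by $g$ is the graph $\sigma_g=(g,\id_X)\colon X\to X\times X$. Since a Gysin functor turns finite coproducts into direct sums, this yields $E(X\times X)\iso \bigoplus_{g\in \Aut_\cC(X)} E(X)$, with the $g$-summand given by the pushforward $(\sigma_g)_!E(X)$.

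Next I would translate this decomposition into the $R$-$D$-$I$ language. An element $a\in E(X)$ sitting in the $g$-summand is $(\sigma_g)_!(a)\in E(X\times X)$, which in the span picture of Corollary~\ref{co:intro-RDI} is exactly the span $[X\llla{g} X \llra{\id} X]$ labelled by $a$. That corollary identifies this span with $Rg\circ Da\circ I(\id)=Rg\circ Da$, which is what the proposition calls $[g]a$. So the underlying abelian group of $\End_{\cC_E}(X)$ is $\bigoplus_g E(X)\cdot [g]$, matching the twisted group ring as an abelian group.

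Then I would compute the product $[g]a\cdot[h]b=(Rg\circ Da)\circ(Rh\circ Db)$ using the pullback formula from the same corollary. The required pullback is of $\id\colon X\to X$ against $h\colon X\to X$, which is solved by $P=X$ with $t=\id$ and $s=h$. Substituting into the formula of Corollary~\ref{co:intro-RDI}, the composite is represented by the span $[X\llla{gh} X \llra{\id} X]$ labelled by $(s^*a)(t^*b)=h^*a\cdot b$, and therefore equals $R(gh)\circ D(h^*a\cdot b)=[gh](h^*a\cdot b)$. This is exactly the multiplication rule in the statement.

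The hard part will be the first step: extracting from the definitions in Section~\ref{se:Gysin-Galois} the decomposition $X\times X\iso \coprod_g X$ together with enough naturality to know that its $g$-th summand, pushed forward along the graph map $\sigma_g$, really does correspond under the span dictionary of Corollary~\ref{co:intro-RDI} to the span with left leg $g$ and right leg $\id$. Once that identification is secured, both the additive identification and the multiplication formula reduce to rewriting a single pullback square and invoking the composition rule.
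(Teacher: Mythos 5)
Your proposal is correct and follows essentially the same route as the paper's proof: use the Galois property to decompose $X\times X$ as $\coprod_{g\in\Aut_\cC(X)}X$ via graph maps, use additivity of the Gysin functor to get $E(X\times X)\iso \bigoplus_g E(X)$ with the basis summands identified as the elements $Rg\circ Da$ (the paper writes these as $D_a\circ R_{\sigma^{-1}}$ using the graphs $(\id,\sigma)$, which is the same after reindexing), and then derive the twisted multiplication from the $RDI$ rewriting rules. The step you flag as the hard part is immediate from the paper's definition of Galois, since your graph $(g,\id)\colon X\ra X\times X$ is the paper's component $(\id,g^{-1})$ precomposed with the isomorphism $g$, so the two coproduct decompositions agree up to reindexing.
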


The above proposition describes the full subcategory of $\cC_E$
consisting of a single Galois object.  In a similar vein, one can 
explicitly describe the full subcategories generated by multiple
Galois objects.  See
Section~\ref{se:struc}.

\medskip

Although the motivation for this paper comes from a concrete question concerning
motivic homotopy theory, here we only develop the
categorical backdrop.  In a sequel \cite{D2} we will explain how this backdrop
applies to both the $G$-equivariant and motivic settings.

\subsection{Organization of the paper}
In Section 2 we write down a complete definition of the
Grothendieck-Witt category.  In Section 3 we generalize this, by
introducing the notions of a Gysin functor 
and its associated category of correspondences (a Gysin functor is
the same thing as what is called a commutative Green functor in the
group theory literature).  Section 4 continues
the development of this machinery and proves the main ``reconstruction
theorem'' (which in this generality is a simple exercise in category theory).  

Section 5 gives a deeper investigation into the structure of
correspondence categories, and serves as a prelude to Section 6 where
we work out some basic computations inside 
Grothendieck-Witt categories over a field.  

\subsection{Notation and terminology}

The common notation ``$f(x)$'' establishes a right-to-left trend in
symbology: one starts with $x$ and then applies $f$ to it.  The common
notation $\Hom(A,B)$ is based on the opposite left-to-right trend.
The opposing nature of these two notations is one of the most common
annoyances in modern mathematics.  Our general philosophy in this
paper is that we will always use the right-to-left convention, except
when we write $\Hom(A,B)$.  This has already appeared in our
treatement of the Burnside category, where spans from $\cO_1$ to
$\cO_2$ were drawn with the $\cO_1$ term on the right.  That
particular convention will have various incarnations throughout the
paper.

The projection map $X\times Y\times Z \ra X\times Z$ will be written
$\pi^{XYZ}_{XZ}$, and similarly for other projection maps.  If
$f\colon A\ra X$ and $g\colon A\ra Y$, then it is sometimes useful to
denote the induced map $A\ra X\times Y$ as $f\times g$.  
Unfortunately, $f\times g$ also denotes the map $A\times A\ra X\times
Y$.  Usually it is clear from context which one is meant, but when
necessary we will write $(f\times g)^A_{XY}$ and $(f\times
g)^{AA}_{XY}$ to distinguish them.  In all these conventions, the
superscript is the domain and the subscript is the range.

\subsection{Acknowledgments}  
I am grateful to Jeremiah Heller and Kyle Ormbsy for expressing
interest in these results, for their diligence in tracking down the
reference \cite{Dr}, and for useful conversations.  Likewise, I am
grateful to Ang\'elica Osorno for a very helpful and inspiring
discussion.

%%%%%%%%%%%%%%%%%%%%%%%%%%%%%%%%%%%%%%%%%%%%%%%%%%%%%%%%%%%%%%%%%%%%%%%%%

\section{Background on Grothendieck-Witt groups and composition}
\label{se:GWcat}

In this section we recall the definition and basic properties of the
Grothendieck-Witt group.  Then we explain how these groups can be
assembled to give the hom-sets in a certain category.  

\medskip

\subsection{Grothendieck-Witt groups}
\label{se:GW-groups}
Let $R$ be a commutative ring.  A \dfn{quadratic space} over $R$ is a
pair $(P,b)$ consisting of a finitely-generated, projective $R$-module
$P$ together with a map $b\colon P\tens_R P \ra R$ that is symmetric
in the sense that $b(x,y)=b(y,x)$ for all $x,y\in P$.  One says that
$(P,b)$ is \dfn{nondegenerate} if the adjoint map $P\ra \Hom_R(P,R)$
associated to  $b$ is an 
isomorphism of $R$-modules.

Given any
maximal ideal $m$ of $R$ there is an induced map
\[ \xymatrix{
P\tens_R P \ar[r]^b\ar[d] & R\ar[d] \\
(P/mP) \tens_{R/m} (P/mP) \ar@{.>}[r]^-{b_m} & R/m
}
\]
giving a symmetric bilinear form $b_m$ on the $R/m$-vector space
$P/mP$.  One readily checks that $(P,b)$ is nondegenerate if and only
if $(P/mP,b_m)$
is nondegenerate for every maximal ideal $m$ of $R$.  In many cases
nondegeneracy is most easily checked using this criterion.

It will be useful for us to sometimes think geometrically.  A
quadratic space is an algebraic vector bundle on $\Spec R$ equipped
with a fibrewise symmetric bilinear form, and it is nondegenerate if
the bilinear forms on the closed fibers are all nondegenerate.
 
Note that there is an evident direct sum operation on quadratic
spaces.  There is also a tensor product: if $(P,b)$ and $(Q,c)$ are
quadratic spaces then $(P\tens_R Q,b\tens_R c)$ denotes the projective
module $P\tens_R Q$ equipped with the bilinear form
\[ \xymatrixcolsep{2.5pc}\xymatrix{
(P\tens_R Q)\tens_R (P\tens_R Q) \ar[r]^-{\id\tens t\tens \id}
& P\tens_R P\tens_R Q\tens_R Q \ar[r]^-{b\tens c} & R\tens_R R
\ar[r]^-{\mu} & R.
}
\]
It is easy to check that the direct sum and tensor product of
nondegenerate quadratic spaces are again nondegenerate.

The \dfn{Grothendieck-Witt group} of $R$, denoted $\GW(R)$,
is the Grothendieck group of nondegenerate quadratic spaces with
respect to direct sum.  It has a ring structure induced by tensor
product.
If $f\colon R\ra S$ is a map of commutative rings then there is an
induced map of rings
$f_*\colon \GW(R)\ra \GW(S)$ given by $(P,b)\mapsto (P\tens_R
S,b\tens_R \id_S)$.

It turns out that $\GW(\blank)$ is also a contravariant functor, but
only with respect to certain kinds of maps.  We explain this next.

\begin{defn}
A map of commutative rings $R\ra S$ is \dfn{sheer} if $S$ is a
finitely-generated, projective $S$-module.  
\end{defn}

When $R\ra S$ is sheer there is a trace map $\tr_{S/R}\colon S\ra R$
defined in the evident way: $\tr_{S/R}(s)$ is the trace of the
multiplication-by-$s$ map $x\mapsto xs$ on $S$.  The map
$\tr_{S/R}$ 
is $R$-linear.

If $f\colon R\ra S$ is sheer  then there is a map
$f^!\colon \GW(S)\ra \GW(R)$ defined as follows: if $(Q,c)$ is a quadratic space over
$S$ then we let $f^!(Q,c)$ be $Q$ regarded as an $R$-module (via
restriction of scalars along $f$) equipped with the bilinear pairing
\[ \xymatrix{
Q\tens_R Q \ar[r] & Q\tens_S Q \ar[r]^-c &  S \ar[r]^{\tr_{S/R}} &R.
}
\]
Note that $f^!$ will usually not be a map of
rings.  

\begin{remark}
The above material on the Grothendieck-Witt group is standard, and can be
found in \cite{S}.  The map $f_!$ is sometimes called the
\dfn{Scharlau transfer}; one can find it in \cite[Chapter 2.5]{S} 
\end{remark}

\subsection{Separable algebras} The following material is classical,
but perhaps not as readily accessible in the literature as
it could be.  See \cite{J}, \cite{DI}, and \cite{L}, though.

\begin{defn}
  Let $A\ra B$ be a map of commutative rings.  We say that $B$ is a
  \dfn{separable} $A$-algebra if any of the following equivalent conditions is
  satisfied:
\begin{enumerate}[(1)]
\item $B$ is projective as a $B\tens_A B$-module,
\item The multiplication map $\mu\colon B\tens_A B \ra B$ is split in
the category of $B\tens_A B$-modules,
\item There exists an element $\omega\in B\tens_A B$ such that
$\mu(\omega)=1$ and $(b\tens 1)\omega=\omega(1\tens b)$ for all $b\in
B$.
\item The ring map $B\tens_A B\ra B$ is sheer.
\end{enumerate}
\end{defn}

The equivalence of the conditions in the above definition is straightforward:
clearly (1)$\Leftrightarrow$(2), and (2)$\Leftrightarrow$(3) by letting $\omega$
be the image of $1$ under the splitting.  Note that $B\tens_A B\ra B$ is
necessarily surjective, and so $B$ is always cyclic as a $B\tens_A B$-module
(and in particular, finitely-generated).  This explains why (1) is equivalent to
(4).

If $\omega$ is a class as in (3) of the above definition, then for any
$z\in B\tens_A B$ one has
\[ z. w = (\mu(z)\tens 1).w = w.(1\tens \mu(z)).
\]
To see this, write $z=\sum a_i\tens b_i$ and then just compute that
\begin{align*}
 z.w=\sum (a_i\tens b_i).w = \sum (a_i\tens 1)(1\tens b_i).w &=\sum
(a_i\tens 1)(b_i\tens 1).w \\
&= \Bigl ( \bigl (\sum a_ib_i\bigr )\tens 1\Bigr
).w=(\mu(z)\tens 1).w.
\end{align*}
In particular, notice that the class $\omega$ from (3) will be unique:
if $\omega'$ is another such class then we would have
\[ \omega.\omega'=(\mu(\omega)\tens 1).\omega' = (1\tens
1).\omega'=\omega'
\]
and likewise $\omega.\omega'=\omega$.  Also notice that $\omega$ is
idempotent.  Consequently, we have the isomorphism of rings
\[ B\tens_A B\iso (B\tens_A B)/w \,\times\, (B\tens_A B)/(1-w)
\]
given in each component by projection.  The second component can be
identified with $B$.  Indeed, certainly $1-\omega$ belongs to $\ker
\mu$.  Conversely, if $s\in \ker\mu$ then $s.\omega=(\mu(s)\tens
1).\omega=0$, and so $s=s-s.\omega=(1-\omega)s \in (1-\omega)$.  It
follows that $\mu$ induces an isomorphism of rings $(B\tens_A
B)/(1-\omega) \iso  B$.

\begin{remark} 
It helps to have some geometric intuition here.  When $E\ra B$ is a
topological covering space, the diagonal $\Delta\colon E\ra E\times_B
E$ gives a homeomorphism from $E$ onto a particular component of
$E\times_B E$.  Similarly, when $A\ra B$ is separable then $\Spec
B\times_{\Spec A} \Spec B$ splits off the diagonal copy of $\Spec B$
as one connected component.  The idempotent $\omega\in B\tens_A B$ is
the algebraic culprit for this splitting.
\end{remark}

\begin{remark}
There is another description of $\omega$ that is sometimes useful.
Since $B$ is a finitely-generated, projective $B\tens_A B$-module
there is a trace map $\tr_{B/(B\tens_A B)}\colon \End(B)\ra B\tens_A B$.
The element $\omega$ is simply $\tr_{B/(B\tens_A B)}(\id_B)$.
\end{remark}

When $B$ is a separable $A$-algebra there is a canonical quadratic
space over the ring $B\tens_A B$: it is $B$ itself (with the usual structure of
$B\tens_A B$-module), equipped with the following bilinear
form:
\[ B\tens_{(B\tens_A B)} B \ra B\tens_A B, \qquad x\tens y \ra (xy\tens 1).\omega.
\]
A moment's check shows that this is indeed $B\tens_A B$-bilinear, as
required.      We will denote this quadratic space as
$(B,\mu\cdot \omega)$.  

More generally, for any quadratic space $(P,b)$ over $B$ we obtain a
quadratic space $(P,b\cdot\omega)$ over $B\tens_A B$.  The underlying
module is $P$ (regarded as a $B\tens_A B$-module, where it is
necessarily projective) equipped with the
bilinear form
\[ P\tens_{(B\tens_A B)} P \ra B\tens_A B, \qquad
x\tens y\mapsto (b(x,y)\tens 1).\omega.
\]
This construction induces a map of groups (not rings)
\[ \GW(B) \ra \GW(B\tens_A B), \qquad [P,b]\mapsto [P,b\cdot \omega].
\]
Of course this is just the map $\mu^!$ defined at the end of
Section~\ref{se:GW-groups}, where $\mu$ is the multiplication
$B\tens_A B\ra B$.

\begin{remark}
The significance of the quadratic space $(B,\mu\cdot \omega)$ will
become clear in Section~\ref{se:GWsub} below.  It plays the role of
the identity morphism in the Grothendieck-Witt category.  
\end{remark}

\medskip

We will shortly restrict ourselves to studying maps $R\ra S$ which are
both sheer and separable.  Such maps are commonly referred to by
another name:

\begin{prop}
\label{pr:Noeth-ssep}
Assume that $R$ is Noetherian.  Then $R\ra S$ is both sheer and
separable if and only if $R\ra S$ is finite and \'etale.  
\end{prop}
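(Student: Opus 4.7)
The plan is to unpack both sides into equivalent local-algebraic conditions and check they match. Recall the standard algebro-geometric characterization: a ring map $R\to S$ is finite and \'etale if and only if $S$ is a finite, flat, and unramified $R$-algebra, where unramified means $\Omega_{S/R}=0$, or equivalently that the kernel of the multiplication $\mu\colon S\tens_R S\ra S$ is generated by an idempotent of $S\tens_R S$. So the task reduces to matching ``sheer $+$ separable'' against ``finite $+$ flat $+$ unramified.''

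First I would show, using the Noetherian hypothesis on $R$, that ``sheer'' is equivalent to ``finite $+$ flat.'' The forward direction is immediate: projective implies flat, and finitely generated is finite. For the converse, since $R$ is Noetherian and $S$ is finite over $R$, the $R$-module $S$ is automatically finitely presented, and the classical theorem that finitely presented flat modules are projective (cf.\ Bourbaki) gives the conclusion. This step does not interact with separability at all.

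Second, assuming we already have $S$ finite and flat over $R$, I would show ``separable'' is equivalent to ``unramified.'' The $(\Rightarrow)$ direction is essentially done in the preceding discussion in the excerpt: the separability idempotent $\omega$ from condition (3) of the separability definition was shown there to satisfy $(S\tens_R S)/(1-\omega)\iso S$ via $\mu$, and hence $\ker \mu=(1-\omega)$ is generated by an idempotent. For $(\Leftarrow)$, suppose $\ker\mu=(e)$ with $e$ idempotent, and set $\omega:=1-e$. Then $\mu(\omega)=\mu(1)-\mu(e)=1$. To verify the commutation relation $(b\tens 1)\omega=\omega(1\tens b)$ of condition (3), note that $(b\tens 1)-(1\tens b)$ lies in $\ker\mu=(e)$, so write it as $ce$ for some $c\in S\tens_R S$; then
\[
\bigl[(b\tens 1)-(1\tens b)\bigr]\omega = ce(1-e)=c(e-e^2)=0,
\]
which is the required identity. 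Hence condition (3) of the separability definition holds, and $S$ is separable.

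Combining these two equivalences yields sheer $+$ separable $\Leftrightarrow$ finite $+$ flat $+$ unramified $=$ finite \'etale, as desired. The main obstacle is really just knowing which direction of the separability/unramifiedness equivalence is for free (the forward one, handed to us by the explicit splitting in the preceding discussion) and which requires the small calculation above; the finite-flat-implies-projective step is standard once one remembers to invoke the Noetherian hypothesis to get finite presentation.
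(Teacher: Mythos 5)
Your proposal is correct, and while the forward implication follows essentially the paper's own route, your converse is genuinely different. For ``sheerly separable $\Rightarrow$ finite \'etale'' both arguments boil down to: sheer gives finite flat, and separability kills $\Omega_{S/R}$ (the paper extracts $I=I^2$ from the $S\otimes_R S$-linear splitting, you observe $\ker\mu=(1-\omega)$ is generated by an idempotent), after which one quotes the standard criterion that a flat, finite-type, unramified map over a Noetherian base is \'etale. For the converse, the paper, after getting sheerness from Eisenbud's flat-plus-finitely-generated-implies-projective, argues geometrically: $s\mapsto s\otimes 1$ is \'etale by base change, $\mu$ composed with it is the identity, so $\mu$ is \'etale by the cancellation property, hence $S$ is flat and finite (cyclic) over the Noetherian ring $S\otimes_R S$ and therefore projective, which is condition (1) of separability. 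You instead use unramifiedness to get an idempotent generator $e$ of $\ker\mu$ and build the separability element $\omega=1-e$ directly, verifying condition (3) by the short computation $\bigl[(b\otimes 1)-(1\otimes b)\bigr]\omega = ce(1-e)=0$ (and $\mu(\omega)=1$); this is valid since $S\otimes_R S$ is commutative. Your route is more elementary: it avoids the \'etale cancellation lemma and the second appeal to flat-implies-projective, and the Noetherian hypothesis enters only through sheerness (and through finite type equalling finite presentation). What it costs is reliance on the equivalence ``unramified $\Leftrightarrow$ $\ker\mu$ generated by an idempotent,'' which hides the standard but nontrivial lemma that a finitely generated ideal $I$ with $I=I^2$ is generated by an idempotent (determinant trick); that step deserves a citation or a line of proof, just as the paper cites Milne and Eisenbud for its external inputs.
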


\begin{proof}
Suppose $R\ra S$ is sheerly separable.  Then $R\ra S$ is automatically
finite and flat.  
Consider the exact sequence
\[ 0 \ra I \lra S\tens_R S \llra{\mu} S \ra 0.
\]
Since $R\ra S$ is separable, this is split as a sequence of $S\tens_R
S$-modules.  So there is an $S\tens_R S$-linear map $\chi\colon S\tens_R S\ra I$
splitting the inclusion.  Linearity implies that this map sends $I$
into $I^2$, and so surjectivity gives us $I=I^2$.  So
$\Omega_{S/R}=I/I^2=0$.  Since $S$ is flat and finite-type over $R$,
and $\Omega_{S/R}=0$, it follows that $R\ra S$ is \'etale by
\cite[Proposition I.3.5]{Mi}.

Now suppose that $R\ra S$ is finite \'etale.  Since $R\ra S$ is flat,
$R$ is Noetherian, and $S$ is finitely-generated, it follows from
\cite[Corollary 6.6]{E} that $S$ is projective over $R$.  So $R\ra S$
is sheer.

The map $f\colon S\ra S\tens_R S$ given by $f(s)=s\tens 1$ is also
\'etale (geometrically, \'etale maps are closed under pullback).  If
$\mu\colon S\tens_R S\ra S$ is the multiplication, then $\mu \circ f=\id$.
Since $f$ and $\id$ are \'etale, so is $\mu$ by \cite[Corollary
I.3.6]{Mi}.  Therefore $S$ is flat over $S\tens_R S$.  But $S$ is
finite-type over $R$ and $R$ is Noetherian, hence $S$ and $S\tens_R S$
are both Noetherian as well.  Since $S$ is both flat and
finitely-generated (in fact, cyclic) over $S\tens_R S$,
it is actually
projective by \cite[Corollary 6.6]{E} again.  So $R\ra S$ is separable.
\end{proof}

\begin{cor}
Let $k$ be a field.  A map of commutative rings $k\ra E$ is sheer and
separable if and only if there is an isomorphism of $k$-algebras
$E\iso E_1\times E_2\times\cdots \times E_n$ where each $E_i$ is a
separable (in the classical sense) field extension of $k$.
\end{cor}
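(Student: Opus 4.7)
The plan is to prove both directions independently, using the previous Proposition~\ref{pr:Noeth-ssep} as a bridge. Since $k$ is a field and hence Noetherian, the proposition tells us that sheer and separable is the same as finite and \'etale. So what really must be established is the purely classical fact that finite \'etale $k$-algebras are exactly finite products of finite separable field extensions of $k$.

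For the forward direction, assume $k \to E$ is sheer and separable, hence finite \'etale by the proposition. Then $E$ is a finite-dimensional $k$-algebra, hence Artinian, so it decomposes uniquely as a finite product $E \cong E_1 \times \cdots \times E_n$ of local Artinian $k$-algebras. Each $E_i$ is a direct factor of $E$ (the localization of $E$ at the idempotent $e_i$), and the fact that finite \'etale is preserved by such localizations means each $E_i$ is finite \'etale over $k$. The remaining task, and the \emph{main obstacle}, is to show that each local factor $E_i$ is actually a field. For this I would base change to an algebraic closure $\bar k$: a finite \'etale algebra over an algebraically closed field must be a finite product of copies of $\bar k$ (this can be seen by splitting off \'etale components using idempotents as in the separability discussion above, together with the fact that any finite field extension of $\bar k$ is trivial), so $\bar k \otimes_k E_i$ is reduced. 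Since $\bar k / k$ is faithfully flat, $E_i$ itself is reduced, and a reduced local Artinian ring is a field. Finally, each $E_i$ is a separable extension of $k$ because the reducedness of $\bar k \otimes_k E_i$ is exactly the classical criterion for separability of an algebraic extension.

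For the reverse direction, suppose $E \cong E_1 \times \cdots \times E_n$ with each $E_i$ a finite separable field extension of $k$. Sheerness is immediate since $E$ is a finite-dimensional $k$-vector space, hence free. For separability in the sense of the paper, I would use the criterion (3) from the definition and construct the required element $\omega \in E \otimes_k E$. Each classical separable extension $E_i$ is well known to admit such a $\omega_i \in E_i \otimes_k E_i$ (one can build it from a dual basis with respect to the trace form of a primitive element whose minimal polynomial is separable, or equivalently note that the multiplication $E_i \otimes_k E_i \to E_i$ splits because $k \to E_i$ is \'etale). Using the canonical decomposition
\[
E \otimes_k E \;\cong\; \bigoplus_{i,j} E_i \otimes_k E_j,
\]
set $\omega := \sum_i \omega_i$, where each $\omega_i$ is viewed as an element of the $(i,i)$ summand. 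A direct check verifies conditions (3): $\mu(\omega) = \sum_i \mu(\omega_i) = \sum_i e_i = 1_E$, and for $x = \sum_j x_j \in E$ the commutation $(x \otimes 1)\omega = \omega(1 \otimes x)$ reduces to the $(i,i)$-diagonal case because the cross terms $(x_j \otimes 1)\omega_i$ vanish for $i \neq j$ (since $x_j \cdot E_i = 0$ in $E$). This completes the construction and finishes the proof.
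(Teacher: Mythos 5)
Your proposal is correct, and it differs from the paper's treatment in an instructive way: the paper disposes of the corollary in one line, invoking Proposition~\ref{pr:Noeth-ssep} to replace ``sheerly separable'' by ``finite \'etale'' and then simply citing \cite[Proposition I.3.1]{Mi} for the classical structure theorem for finite \'etale algebras over a field. You use the same proposition as a bridge in the forward direction, but then actually prove the classical fact rather than cite it: Artinian decomposition into local factors, base change to $\bar k$, the statement that a finite \'etale algebra over an algebraically closed field is split (hence reduced), faithfully flat descent of reducedness, a reduced local Artinian ring is a field, and geometric reducedness as the classical separability criterion. In the reverse direction you bypass the \'etale translation entirely, checking sheerness trivially and exhibiting the separability idempotent $\omega=\sum_i \omega_i$ by hand from the idempotents of the classical separable extensions $E_i$; the block-diagonal verification, including the vanishing of the cross terms $(x_j\otimes 1)\omega_i$ for $i\neq j$, is correct, and the description of $\omega_i$ via the trace-form dual basis matches the paper's remark that $\omega=\tr_{B/(B\otimes_A B)}(\id_B)$. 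What the paper's route buys is brevity and a clean pointer to the literature; what yours buys is self-containedness, and in the reverse direction an argument that needs neither Noetherian hypotheses nor \'etale machinery. The one thin spot is the parenthetical justification that a finite \'etale algebra over $\bar k$ is a product of copies of $\bar k$: the idempotent-splitting sketch deserves a sentence more (e.g.\ split off one residue field at a time using the separability idempotent, or argue on each local factor that $\Omega=0$ forces $m/m^2=0$ via the conormal sequence with its canonical section and then apply Nakayama), but this is a standard fact and using it introduces no circularity into your argument.
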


\begin{proof}
By Proposition~\ref{pr:Noeth-ssep} we can replace ``sheerly separable'' by
``finite \'etale'', and then the result is standard (for example, see
\cite[Proposition I.3.1]{Mi}).
\end{proof}

\begin{remark}
Suppose we are working in a category that has finite limits.  Let
$\cP$ be a property of morphisms that is closed under composition and
pullback.  Say that a morphism $X\ra Y$ has property $\cP\cP$ if $X\ra
Y$ has $\cP$ and $\Delta\colon X\ra X\times_Y X$ also has $\cP$.  Then
it follows by general category theory that property $\cP\cP$ is closed
under composition and pullback, and has the feature that if composable
morphisms $X\llra{f} Y \llra{g} Z$ are given such that both $f$ and
$gf$ have $\cP\cP$ then so does $g$.  For the proof of the latter, the
main ideas can be found in any standard reference dealing with the
case where $\cP$ is ``\'etale'' (e.g. \cite[Corollary I.3.6]{Mi}).
In the present context, we can apply this principle to the opposite
category of commutative rings, where $\cP$ is ``sheer'' and $\cP\cP$
is therefore ``sheerly separable''.   So the sheerly separable maps
are closed under pullbacks and composition, and have the indicated
two-out-of-three property.  
\end{remark}

\begin{example}
Here are three examples to keep in mind when dealing with these
concepts:
\begin{enumerate}[(a)]
\item If $R$ and $S$ are commutative rings then the projection 
$R\times S\ra R$ is sheerly separable, but not an injection.
\item If $R$ is a commutative ring then the map $R[x]\ra R$ sending
$x\mapsto 0$ is separable but not sheer.
\item Given any non-separable, finite field extension $k\inc E$,
this map is sheer but not separable.  
\end{enumerate}
\end{example}

\begin{remark}
The maps we are calling ``sheerly separable'' are called ``strongly
separable'' in \cite{J}, and ``projective separable'' in \cite{L}.
The following two conditions on a map of commutative rings $R\ra S$ are
also equivalent to being sheerly separable:
\begin{enumerate}[(1)]
\item $S$ is separable over $R$ and $S$ is projective as an $R$-module
  (but not required to be finitely-generated);
\item $S$ is a finitely-generated projective module over $R$ and the
trace form $S\tens_R S\ra R$ (given by $x\tens y\mapsto \tr_{S/R}(xy)$) is nondegenerate.   
\end{enumerate}
The proof of these equivalences, or at least a sketch of such, is
available in \cite[Proposition 6.11]{L}.  We will not need either of
these characterizations in the present paper.
\end{remark}

\subsection{The Grothendieck-Witt category of a commutative ring}
\label{se:GWsub}
We next restrict to a somewhat specialized setting.  Assume that $S$,
$T$, and $U$ are $R$-algebras, but also assume that $R\ra T$ is sheer
and separable.

Now suppose given a quadratic space $(Q,c)$ over $U\tens_R T$ and
another quadratic space $(P,b)$ over $T\tens_R S$.  In the following
diagram, it is readily checked that the ``across-the-top-then-down''
composite satisfies the appropriate $T$-invariance condition to induce
the dotted map:
\[ \xymatrix{
Q\tens_R P\tens_R Q \tens_R P \ar[r]^{1\tens t\tens 1} \ar[ddd] & 
Q\tens_R Q\tens_R P\tens_R P
\ar[r]^-{c\tens b} & (U\tens_R T)\tens_R (T\tens_R S) \ar[d]^{1\tens
\mu\tens 1} \\
&& U\tens_R T\tens_R S \ar[d]^{1\tens \tr_{T/R}\tens 1} \\
&& U\tens_R R\tens_R S\ar[d]^\iso \\
(Q\tens_T P)\tens_R(Q\tens_T P) \ar@{.>}[rr]^-{c\hat{\tens}_T b} && U\tens_R S.
}
\]
This produces a quadratic space $(Q\tens_T P,c\hat{\tens}_T b)$ over
the ring $U\tens_R S$.  It is easy to check that this is nondegenerate
if $(P,b)$ and $(Q,c)$ were, and the construction is evidently
compatible with direct sums.   So we obtain a pairing
\begin{myequation}
\label{eq:GWC-comp}
\GW(U\tens_R T)\tens \GW(T\tens_R S) \llra{\hat{\tens}_T} \GW(U\tens_R S).
\end{myequation}
It is easy to check that these pairings satisfy associativity.  They
are also unital, with the unit being the canonical element 
$(T,\mu\cdot\omega)$ in $\GW(T\tens_R T)$.  

If we denote the evident maps as
\[ j_{12}\colon U\tens_R T \ra U\tens_R T\tens_R S, \quad
j_{23}\colon T\tens_R S \ra U\tens_R T\tens_R S,
\]
\[
j_{13}\colon U\tens_R S \ra U\tens_R T\tens_R S
\]
then the pairing of (\ref{eq:GWC-comp}) can also be expressed as
\[ \alpha\hat{\tens}_T \beta = j_{13}^!\bigl (
({j_{12}}_*\alpha) \cdot ({j_{23}}_*\beta) \bigr )
\]

\begin{defn}
Let $R$ be a commutative ring.  The \dfn{Grothendieck-Witt category}
of $R$ is the category enriched over abelian groups defined as follows:
\begin{enumerate}[(1)]
\item The objects are $\Spec T$ for $T$ a sheerly separable
$R$-algebra,
\item The set of morphisms from $\Spec T$ to $\Spec U$ is the additive
group $\GW(U\tens_R T)$;
\item Composition of morphisms is defined by (\ref{eq:GWC-comp}).
\end{enumerate}
This category will be denoted $\GWC(R)$.  
\end{defn}

%%%%%%%%%%%%%%%%%%%%%%%%%%%%%%%%%%%%%%%%%%%%%%%%%%%%%%%%%%%%%%%%%%%%%%%%%

\section{The general theory of Gysin functors}
\label{se:Gysin}

When studying the Grothendieck-Witt categories
$\GWC(R)$, it turns out to be advantageous to investigate the story in
greater generality.  We do this in the present section.  The
``Gysin functors'' that we introduce here are simply functors with
pullback and pushforward maps which are compatible in familiar ways.
Certainly such functors have been encountered time and again in the
literature, and so it is unlikely that anything in this section is
actually ``new''.  A very early reference is \cite{G}, whereas a more
recent reference is \cite{B}.  In the setting of finite group theory,
our Gysin functors are precisely the {\it commutative
  Green functors}.  

Being unaware of a reference that
serves as a perfect source for what we need, we take some time here to
develop the theory from first principles.  In doing so, we have tried
to provide a unity of discussion that justifies this.  We stress,
though, that much of the material from this section is in \cite{B}.

The main things we do here are:
\begin{itemize}
\item Give the definition of a Gysin functor and develop the basic properties;
\item Observe the existence of a ``universal'' Gysin functor, called
the Burnside functor;
\item Observe that any Gysin functor $E$ on a category $\cC$ 
gives rise to an associated closed, symmetric monoidal category, denoted $\cC_E$, of
``$E$-correspondences'' between the objects of $\cC$.    These
symmetric monoidal categories have the properties that all objects are
dualizable, and moreover every object is self-dual.
\end{itemize}

\medskip

\subsection{Gysin functors}
\label{se:Gysin-func}
Let $\cC$ be a category with finite limits and finite coproducts,
with the property that pullbacks distribute over coproducts: that is,
given any maps $A\ra X$, $P_1\ra X$, and $P_2\ra X$ the natural map
\[ (A\times_X P_1)\amalg (A\times_X P_2) \ra A\times_X (P_1\amalg P_2)
\]
is an isomorphism.  We also assume that for any objects $A$ and $B$ in
$\cC$ the following diagrams are pullbacks:
\[ \xymatrix{
A \ar[r]^{\id}\ar[d]_{\id} & A\ar[d]^{i_0} & B\ar[d]_{\id}\ar[r]^{\id} & B\ar[d]_{i_1} &
\emptyset \ar[d] \ar[r] & B\ar[d]_-{i_1} \\
A \ar[r]^-{i_0} & A\amalg B & B \ar[r]^-{i_1} & A\amalg B & A
\ar[r]^-{i_0} & A \amalg B.
}
\]
Such categories are called \dfn{finitary lextensive} \cite[Corollary 4.9]{CLW}.  
Standard examples to keep in mind are the
categories $\Set$ and $\GSet$ (and more generally, any topos).

\begin{defn}
\label{de:Gysin}
A \dfn{Gysin functor} on $\cC$ is a
a contravariant functor $E$ from $\cC$ to $\CRing$ together
with a covariant functor $\tilde{E}\colon \cC\ra \Ab$ such
that $E(X)=\tilde{E}(X)$ for every object $X$.  If $f\colon X\ra Y$ is
a map we write $f^*=E(f)$ and $f_!=\tilde{E}(f)$.  The maps $f_!$ will
be called Gysin maps.    For $a\in E(X)$ and $b\in E(Y)$ we write
\[ a\tens b=(\pi^{XY}_X)^*(a)\cdot (\pi^{XY}_Y)^*(b).
\]
We require the
following  axioms:
\begin{enumerate}[(1)]
\item {} {\rm [Zero axiom]} $E(\emptyset)=0$.  
\item {} {\rm [Behavior on sums]} For any objects $X$ and $Y$, the natural map
\[ i_X^*\times i_Y^*\colon E(X\amalg Y) \ra E(X) \times E(Y) \]
is an isomorphism of rings.
Here $i_X\colon X\ra X\amalg Y$
and $i_Y\colon Y\ra X\amalg Y$ are the canonical maps.
\item {}{\rm [Push-product axiom]}
For any maps
$f\colon X\ra X'$, $g\colon Y\ra Y'$ and $a\in E(X)$, $b\in E(Y)$ one
has
\[
(f\times g)_!(a\tens b)=f_!(a)\tens
g_!(b).
\]

\item {}{\rm [Push-Pull axiom]} For every pullback diagram
\[ \xymatrix{
A \ar[d]_p \ar[r]^f & B\ar[d]^q \\
C \ar[r]^g & D
}
\]
one has $f_! p^*=q^*g_!$.
\end{enumerate}
A natural transformation between Gysin functors is a natural
transformation of contravariant functors that is also a natural
transformation of the covariant piece.
\end{defn}

\begin{remark}
\mbox{}\par
\begin{enumerate}[(a)]
\item
The above definition starts with the ``internal'' multiplications on
the abelian groups $E(X)$ and derives the external pairings
$E(X)\tens E(Y)\ra E(X\times Y)$.  As usual, the opposite approach can
also be taken: we could have written the above definition in terms of
external pairings, and then constructed the internal pairings using the
diagonal maps.  The two approaches are clearly equivalent.
\item When $\cC$ is the category of finite $G$-sets, what we have
called Gysin functors are more commonly called {\it commutative Green
functors\/}; see \cite[Chapter 2]{B}.   We adopted the term 
``Gysin functor'' due to its brevity.
\end{enumerate}
\end{remark}

The following lemmas are useful to record:

\begin{lemma}
\label{le:iso-shriek}
If $f$ is an isomorphism in $\cC$ then $f_!=(f^*)^{-1}$ in any Gysin functor.
\end{lemma}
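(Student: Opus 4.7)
The plan is to derive both $f^* f_! = \id$ and $f_! f^* = \id$ directly from the Push-Pull axiom, using the fact that an isomorphism produces trivial pullback squares.

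First I would observe that since $f\colon X\to Y$ is an isomorphism, the square
\[ \xymatrix{
X \ar[d]_{\id_X} \ar[r]^{\id_X} & X \ar[d]^{f} \\
X \ar[r]^{f} & Y
} \]
is a pullback (any square with an isomorphism on two parallel sides and identities on the other two is cartesian). The Push-Pull axiom then yields $(\id_X)_! (\id_X)^* = f^* f_!$, which simplifies to $\id_{E(X)} = f^* f_!$ by functoriality of $E$ and $\tilde E$ on identities.

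Next I would do the symmetric argument with the pullback square
\[ \xymatrix{
X \ar[d]_{f} \ar[r]^{f} & Y \ar[d]^{\id_Y} \\
Y \ar[r]^{\id_Y} & Y,
} \]
which is again cartesian because $f$ is an isomorphism. Push-Pull gives $f_! f^* = (\id_Y)^* (\id_Y)_! = \id_{E(Y)}$. Combining the two equalities, $f_!$ is a two-sided inverse of $f^*$, i.e.\ $f_! = (f^*)^{-1}$, as desired.

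There is essentially no obstacle here; the only thing to be careful about is verifying that the two squares displayed really are pullbacks in $\cC$, which is immediate from the universal property since one pair of parallel arrows consists of isomorphisms. No appeal to the Zero, Behavior on sums, or Push-product axioms is needed.
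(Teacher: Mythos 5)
Your proof is correct and takes essentially the same approach as the paper: the paper applies the Push-Pull axiom to the single square with identities on top and left and $f$ on the remaining two sides (yielding $f^*f_!=\id$, with invertibility of $f^*$ then supplied by functoriality), while you add the symmetric second square to obtain $f_!f^*=\id$ directly. Both verifications of the squares being pullbacks are fine, so nothing is missing.
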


\begin{proof}
This follows immediately from the push-pull formula, using the
pullback diagram
\[ \xymatrix{
A \ar[r]^{\id}\ar[d]_{\id} & A \ar[d]^f \\
A \ar[r]^f & A.
}
\]
\end{proof}

\begin{lemma}
\label{le:Gysin-sum}
For any objects $A$ and $B$, the composition
\[ \xymatrixcolsep{3.5pc}\xymatrix{
E(A)\oplus E(B) \ar[r]^-{(i_0)_!\oplus (i_1)_!} & E(A\amalg B) \ar[r]
^-{(i_0)^*\times (i_1)^*} & E(A)\times E(B)
}
\]
sends a pair $(x,y)$ to $(x,y)$ (we refrain from calling this the identity only
because the domain and target are perhaps not ``equal'').
Consequently, the pushforward map $(i_0)_!\oplus (i_1)_!\colon E(A)\oplus E(B)\ra
E(A\amalg B)$ is an isomorphism of abelian groups.
\end{lemma}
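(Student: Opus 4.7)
The plan is to evaluate the composition on a pair $(x,y)$ by computing the four matrix entries $(i_\epsilon)^* \circ (i_\delta)_!$ for $\epsilon, \delta \in \{0,1\}$ using the push-pull axiom applied to the four pullback squares that the finitary-lextensive hypothesis on $\cC$ provides us with.

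First I would handle the diagonal entries. The diagonal squares
\[ \xymatrix{ A \ar[r]^{\id}\ar[d]_{\id} & A\ar[d]^{i_0} \\ A \ar[r]^-{i_0} & A\amalg B } \qquad \xymatrix{ B \ar[r]^{\id}\ar[d]_{\id} & B\ar[d]^{i_1} \\ B \ar[r]^-{i_1} & A\amalg B } \]
are pullbacks by hypothesis. Applying the push-pull axiom gives $(i_0)^*(i_0)_! = \id_! \circ \id^* = \id_{E(A)}$, and similarly $(i_1)^*(i_1)_! = \id_{E(B)}$.

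Next I would handle the off-diagonal entries. The square with $\emptyset$ in the upper-left corner (the third pullback guaranteed by the finitary-lextensive hypothesis) yields $(i_1)^*(i_0)_! = p_! q^*$ where $p\colon \emptyset\to B$ and $q\colon \emptyset \to A$ are the unique maps; but this factors through $E(\emptyset)=0$ by the zero axiom, so the composite is zero. The symmetric square shows $(i_0)^*(i_1)_! = 0$. Assembling the four computations gives $((i_0)^* \times (i_1)^*)\circ((i_0)_! \oplus (i_1)_!)(x,y) = (x,y)$, which is the first claim.

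For the consequence, the second map $(i_0)^* \times (i_1)^*$ is an isomorphism by the behavior on sums axiom (2), and we have just shown the composite is the identity, so $(i_0)_! \oplus (i_1)_!$ must be an isomorphism of abelian groups as well. There is no real obstacle here; the only thing to be slightly careful about is that the push-pull axiom is stated for arbitrary pullback squares, so one must explicitly invoke the pullback squares listed in the definition of finitary lextensive to justify the four calculations.
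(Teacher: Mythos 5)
Your proof is correct and follows exactly the route the paper indicates (the paper leaves the details to the reader, citing the push-pull axiom applied to the pullback squares built into the definition of a finitary lextensive category, the zero axiom $E(\emptyset)=0$, and Axiom (2) for the final isomorphism statement). The only cosmetic remark is that the paper lists three such squares, the fourth off-diagonal one being the transpose of the third, which is of course still a pullback.
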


\begin{proof}
Left to the reader.  For the first statement use the push-pull axiom
applied to the three pullback squares listed in the original
introduction of $\cC$, together with $E(\emptyset)=0$.  The second
statement of the lemma then follows directly from Axiom (2) in the
definition of Gysin functor.
\end{proof}

\begin{lemma}
\label{le:Gysin-product}
Let $f\colon A\ra X$ and $g\colon B\ra X$.  Then $(f\times_X
g)_!(1)=f_!(1)\cdot g_!(1)$.  
\end{lemma}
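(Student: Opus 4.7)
The plan is to reduce the statement to a combination of the push-product axiom and the push-pull axiom, by recognizing that the internal product on $E(X)$ is recovered from the external tensor product via pullback along the diagonal $\Delta\colon X\ra X\times X$.

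First I would observe that for any $u,v\in E(X)$, one has $u\cdot v=\Delta^*(u\tens v)$, which is immediate from the definition $u\tens v=(\pi_1)^*(u)\cdot (\pi_2)^*(v)$ together with $\pi_i\circ\Delta=\id$. Applying this to the pair $u=f_!(1)$, $v=g_!(1)$ reduces the problem to computing $\Delta^*\bigl((f\times g)_!(1\tens 1)\bigr)$, since the push-product axiom immediately gives $f_!(1)\tens g_!(1)=(f\times g)_!(1\tens 1)=(f\times g)_!(1)$ (using that $1\tens 1=1$, which follows from $(\pi_A)^*(1)=1=(\pi_B)^*(1)$ in the ring $E(A\times B)$).

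Next I would identify the correct pullback square in $\cC$. The pullback of $\Delta\colon X\ra X\times X$ along $f\times g\colon A\times B\ra X\times X$ consists of pairs $(a,b)$ with $f(a)=g(b)$, which is precisely $A\times_X B$, and the resulting map down to $X$ is exactly $f\times_X g$. So we have the pullback
\[ \xymatrix{
A\times_X B \ar[r]^-{\iota}\ar[d]_{f\times_X g} & A\times B \ar[d]^{f\times g}\\
X \ar[r]^-{\Delta} & X\times X.
}
\]
Applying the push-pull axiom to this square with the element $1\in E(A\times B)$ yields
\[ (f\times_X g)_!(1)=(f\times_X g)_!(\iota^*(1))=\Delta^*\bigl((f\times g)_!(1)\bigr).\]

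Combining the two displays, $(f\times_X g)_!(1)=\Delta^*\bigl(f_!(1)\tens g_!(1)\bigr)=f_!(1)\cdot g_!(1)$, which is the desired equality. There is no real obstacle here beyond correctly spotting the pullback square, since everything else is a direct application of the axioms; in particular the identification $1\tens 1=1$ and the compatibility $u\cdot v=\Delta^*(u\tens v)$ are purely formal consequences of how Gysin functors were set up in Definition~\ref{de:Gysin}.
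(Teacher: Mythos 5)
Your proof is correct and follows essentially the same route as the paper: the paper's proof also applies push-pull to the square comparing $f\times_X g$ over $\Delta$ with $f\times g$, starting from $1\tens 1\in E(A\times B)$, and then invokes the push-product axiom. Your write-up simply makes explicit the formal identities $1\tens 1=1$ and $u\cdot v=\Delta^*(u\tens v)$ that the paper leaves implicit.
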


\begin{proof}
Use push-pull for the square
\[ \xymatrix{
A\times_X B \ar[r]\ar[d]_{f\times_{\!X} g} & A\times B \ar[d]^{f\times g}
\\
X \ar[r]^-{\Delta} & X\times X.
}
\]
Start with $1\tens 1\in E(A\times B)$, and use the push-product axiom.
\end{proof}

\begin{prop}[Projection formula]
Let $E$ be a Gysin functor.  Then given $f\colon X\ra Y$, $\alpha\in E(X)$, and $\beta\in E(Y)$ one has
\[ f_!(\alpha\cdot f^*(\beta))=f_!(\alpha).\beta.
\]
\end{prop}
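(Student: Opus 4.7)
The plan is to derive the projection formula by expressing the internal product $\alpha \cdot f^*(\beta)$ as the pullback of the external product $\alpha \tens \beta$ along a suitable map, and then transporting this identity across a push-pull square. The key geometric observation is that the graph of $f$ is the pullback of the diagonal of $Y$ along $f\times\id_Y$. Concretely, consider
\[ \xymatrix{
X \ar[r]^-{(\id\times f)^X_{XY}} \ar[d]_f & X\times Y \ar[d]^{f\times \id_Y} \\
Y \ar[r]^-{\Delta_Y} & Y\times Y,
}
\]
which is easily verified to be a pullback in $\cC$ by checking the universal property.

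Next I would apply the push-pull axiom to this square to obtain the equality
\[ \Delta_Y^* \circ (f\times \id_Y)_! \;=\; f_! \circ \bigl((\id\times f)^X_{XY}\bigr)^*
\]
of maps $E(X\times Y)\ra E(Y)$, and evaluate both sides on $\alpha\tens \beta$. On the left, the push-product axiom (with $g=\id_Y$) gives $(f\times \id_Y)_!(\alpha\tens \beta)=f_!(\alpha)\tens \beta$, and then $\Delta_Y^*$ converts any external product back into the internal one: using $\pi^{YY}_Y\circ \Delta_Y=\id_Y$ in both factors shows $\Delta_Y^*(u\tens v)=u\cdot v$ for $u,v\in E(Y)$, so the left-hand side becomes $f_!(\alpha)\cdot \beta$. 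On the right, the identities $\pi^{XY}_X\circ (\id\times f)^X_{XY}=\id_X$ and $\pi^{XY}_Y\circ (\id\times f)^X_{XY}=f$ together with contravariant functoriality give $\bigl((\id\times f)^X_{XY}\bigr)^*(\alpha\tens \beta)=\alpha\cdot f^*(\beta)$; applying $f_!$ then produces $f_!\bigl(\alpha\cdot f^*(\beta)\bigr)$. Equating the two sides yields the projection formula.

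The main obstacle is really just finding the correct pullback square; once one recognizes the graph of $f$ as the pullback of $\Delta_Y$ along $f\times\id_Y$, every remaining step is a direct application of an axiom (push-pull, push-product) or an unwinding of the definition of $\tens$ in terms of the internal product and the projections. No new ingredients beyond the definition of a Gysin functor are needed.
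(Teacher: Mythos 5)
Your proposal is correct and follows essentially the same route as the paper: both use the pullback square exhibiting the graph $(\id\times f)\colon X\ra X\times Y$ as the pullback of $\Delta_Y$ along $f\times\id_Y$, apply push-pull to $\alpha\tens\beta$, and finish with the push-product axiom. You simply spell out the unwinding of $(\id\times f)^*(\alpha\tens\beta)=\alpha\cdot f^*\beta$ and $\Delta_Y^*(u\tens v)=u\cdot v$ in more detail than the paper does.
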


\begin{proof}
Using push-pull applied to $\alpha\tens \beta\in E(X\times Y)$,
the pullback diagram
\[ \xymatrix{
X\ar[r]^-{\id\times f}\ar[d]_f & X\times Y \ar[d]^{f\times \id} \\
Y\ar[r]^-\Delta & Y\times Y
}
\]
implies that $f_!(\alpha\cdot f^*\beta)=\Delta^*(f\times
\id)_!(\alpha\tens\beta)$.  The push-product axiom finishes the proof.
\end{proof}

\begin{example}
\label{ex:Gysin}
\mbox{}\par
\begin{enumerate}[(a)]
\item Let $\cC$ be the category of sets but with morphisms the maps where
all fibers are finite (called \dfn{quasi-finite maps} from now on).  
Let $E(S)=\Hom(S,\Z)$, with the ring operations given by pointwise
addition and multiplication.  If $f\colon S\ra T$ then $f^*$ is the
evident map and $f_!\colon
E(S)\ra E(T)$ sends a map $\alpha \colon S\ra \Z$ to the assignment
$t\mapsto \sum_{s\in f^{-1}(t)} \alpha(s)$.

\item Let $G$ be a finite group, and let $\cC$ be the category of
finite $G$-sets. For $S$ in $\cC$ define $\cA(S)$ to be the
Grothendieck group of maps $X\ra S$ (where $X$ is a finite $G$-set),
made into a ring via $[X\ra S]\cdot [Y\ra S]=[X\times_S Y\ra S]$.
Given $f\colon S\ra T$ one gets maps $f^*\colon \cA(T)\ra \cA(S)$  by
pulling back along $f$, and $f_!\colon \cA(S)\ra \cA(T)$ by
composing with $f$. 
\item Let $\Aff_{sh}$ be the opposite category of commutative rings and
sheer maps.  For $R$ a commutative ring we write $\Spec R$ for the
corresponding object of $\Aff$.  
Let $K^0(\Spec R)$ be the Grothendieck group of
finitely-generated $R$-projectives.  For $f\colon \Spec R\ra \Spec S$ we have
$f^*\colon K^0(\Spec S)\ra K^0(\Spec R)$ given by $[P]\mapsto
[P\tens_S R]$, and $f_!\colon K^0(\Spec R)\ra K^0(\Spec S)$ given by
restriction of scalars (so $[P]_R\mapsto [P]_S$).  
\item Fix a commutative, Noetherian ring $R$, and let $\fEt/R$ be the
subcategory of $\Aff$ consisting of objects $\Spec S$ where $R\ra S$
is finite \'etale.  Then $\Spec S\mapsto \GW(S)$ has the
structure of a Gysin functor, as detailed in Section~\ref{se:GWcat}.  
\item Let $\cC$ be the category of topological spaces, with morphisms
the quasi-finite fibrations.  Define $E(X)=\Hom(\pi_0(X),\Z)=H^0(X)$.
The pullback maps are as expected.  For $f\colon X\ra Y$ and
$\alpha\in E(X)$ define
$f_!(\alpha)$ to be the assignment $[y]\mapsto \sum_{x\in f^{-1}(y)}
\alpha([x])$ where $[x]$ and $[y]$ denote the path-components
containing $x$ and $y$.  This is a Gysin functor (the fibration condition is
needed only to show that $f_!$ is well-defined).   Note that this
Gysin functor has a strong relation to that in (a) above.   
\item The following is not an example of a Gysin functor, but is
nevertheless instructive.  Let $\cC$ be the category of finite sets,
and let $\cP(X)$ be the powerset of the set $X$; this is not quite a
ring, but it does have the intersection operation $\cap$ which we will
regard as a multiplication.    Given $f\colon X\ra
Y$ one has the inverse-image map $f^*\colon \cP(Y)\ra \cP(X)$ (which
preserves the multiplication) and the image map $f_*\colon \cP(X)\ra
\cP(Y)$ (which does not).  The axioms of Definition~\ref{de:Gysin} are all
satisfied, when suitably interpreted.  The powerset functor is
something like a ``non-additive Gysin functor''.  
\end{enumerate}
\end{example}

\begin{remark}
Let $\cC$ be the category of oriented topological manifolds, and let
$E(X)=H^*(X)$.  With the usual pullbacks and Gysin morphisms, this is
{\it almost\/} (but not quite) a Gysin functor as we defined above.
The difficulty is that the push-pull axiom only holds for pullback
squares satisfying a suitable transversality condition.  This same
problem arises if one uses smooth algebraic varieties and the Chow
ring, or if one uses oriented manifolds and complex cobordism.
But all of these settings represent appearances in the literature of
structure similar to what we consider in the present paper.
Especially in the case of cobordism, see the axiomatic treatment 
in \cite[Section 1]{Q}.  Prior to \cite[Proposition 1.12]{Q} Quillen
refers to, but does not give, an axiomatic treatment related to the
multiplicative structure; the axioms for a Gysin functor are essentially
 this.  
\end{remark}

\medskip

\subsection{The universal Gysin functor}

Given an object $X$ in $\cC$, define $\cA_\cC(X)$ to be the Grothendieck
group of (isomorphism classes of) maps $S\ra X$ where 
\[ [(S\amalg T)\ra X] = [S\ra X] + [T\ra X].
\]
The multiplication $[S\ra X]\cdot [T\ra X]=[S\times_X T\ra X]$ is
well-defined and makes $\cA_\cC(X)$ into a commutative ring with identity
$[\id\colon X\ra X]$.  We call $\cA_\cC(X)$ the \dfn{Burnside ring} of
$X$.  Note that $\cA_\cC$ has the evident structure of a contravariant
functor to rings, as well as that of a covariant structure to abelian
groups, generalizing the situation in Example~\ref{ex:Gysin}(b).
One readily checks that this is a Gysin functor, called the
\dfn{Burnside functor} for the category $\cC$.  When the category
$\cC$ is understood we abbreviate $\cA_\cC$ to just $\cA$.  

\begin{example}
When $\cC$ is the category of finite sets, note that there is a
natural isomorphism $\cA(S)\iso \Hom(S,\Z)$, sending the element
$[f\colon M\ra S]$ to the assignment $s\mapsto \#f^{-1}(s)$.  The Gysin functor
given in Example~\ref{ex:Gysin}(a) (restricted to the category of finite
sets) is the Burnside functor for this category.  
\end{example}

The Burnside functor has the following universal property:

\begin{prop}
\label{pr:Gysin-universal}
If $E$ is a Gysin functor on the category $\cC$ then there is a unique
map of
Gysin functors $\cA_\cC \ra E$.  It sends $[f\colon A\ra X]$ in
$\cA_\cC(X)$ to $f_!(1)\in E(X)$.  
\end{prop}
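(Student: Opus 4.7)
The plan is to define the candidate $\phi_X\colon \cA_\cC(X)\ra E(X)$ by $\phi_X([f\colon A\ra X]) = f_!(1_A)$, verify that this respects the defining relations of the Grothendieck group $\cA_\cC(X)$, check that it assembles into a morphism of Gysin functors, and finally deduce uniqueness from the compatibility with pushforwards.

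The crux of well-definedness is the additivity relation $[(S\amalg T)\ra X] = [S\ra X] + [T\ra X]$. For this I would first establish the internal identity $1_{S\amalg T} = (i_0)_!(1_S) + (i_1)_!(1_T)$ in $E(S\amalg T)$: by axiom (2) the map $i_0^*\times i_1^*$ is a ring isomorphism sending $1_{S\amalg T}$ to $(1_S,1_T)$, while Lemma~\ref{le:Gysin-sum} says its composite with $(i_0)_!\oplus (i_1)_!$ is the identity, so the two expressions must agree. Applying $f_!$ and invoking functoriality of pushforward then gives $f_!(1_{S\amalg T}) = (f\circ i_0)_!(1_S) + (f\circ i_1)_!(1_T)$, which is exactly what additivity demands. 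Multiplicativity reduces to $f_!(1)\cdot g_!(1) = (f\times_X g)_!(1)$, which is precisely Lemma~\ref{le:Gysin-product}; and $\phi_X([\id_X]) = (\id_X)_!(1) = 1$, so each $\phi_X$ is a unital ring homomorphism. For contravariant naturality, given $h\colon Y\ra X$, the class $h^*[f]$ is represented by the projection $\pi_Y\colon Y\times_X A\ra Y$, and the push-pull axiom applied to the defining pullback square yields $h^*(f_!(1)) = (\pi_Y)_!(\pi_A^*(1)) = (\pi_Y)_!(1)$, as required; covariant naturality is immediate from $(hf)_! = h_!f_!$.

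For uniqueness, suppose $\psi\colon \cA_\cC \ra E$ is any morphism of Gysin functors. For $[f\colon A\ra X]\in \cA_\cC(X)$ one has $f_!([\id_A]) = [f]$ in $\cA_\cC(X)$, so compatibility of $\psi$ with pushforwards gives $\psi_X([f]) = f_!(\psi_A([\id_A]))$. Since each $\psi_A$ is a unital ring homomorphism and $[\id_A]$ is the multiplicative identity of $\cA_\cC(A)$, we get $\psi_X([f]) = f_!(1) = \phi_X([f])$, forcing $\psi = \phi$. The only moderately substantive point in the whole argument is the decomposition $1_{S\amalg T} = (i_0)_!(1_S) + (i_1)_!(1_T)$, extracted from axiom (2) together with Lemma~\ref{le:Gysin-sum}; once that is in hand, every remaining verification is a one-line application of an axiom or of Lemma~\ref{le:Gysin-product}.
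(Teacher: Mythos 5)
Your proof is correct and follows exactly the route the paper sketches: well-definedness via Lemma~\ref{le:Gysin-sum} (giving $(f\amalg g)_!(1)=f_!(1)+g_!(1)$), multiplicativity via Lemma~\ref{le:Gysin-product}, and uniqueness from the identity $[f\colon A\ra X]=f^{\cA}_!(1)$ together with compatibility with pushforwards and units. You have simply written out the details the paper leaves as an exercise, and they check out.
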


\begin{proof}
An easy exercise.  For existence, use the given formula.  The fact that $\cA_\cC(X)\ra E(X)$ is well-defined
follows using Lemma~\ref{le:Gysin-sum} (which implies that $(f\amalg
g)_!(1)=f_!(1)+g_!(1)$).  The fact that it is a ring map follows
from Lemma~\ref{le:Gysin-product}.  Compatibility with pullbacks and
pushforwards is trivial.  Uniqueness follows from the fact that
$[f\colon A\ra X]$ equals $f^\cA_!(1)$, the pushforward in the Gysin 
functor $\cA$.  
\end{proof}

\medskip

\subsection{Categories derived from Gysin functors}
Given a Gysin functor $E$ on $\cC$ 
we can define an additive category $\cC_E$ as follows.  First,
the objects of $\cC_E$ are the same as the objects of $\cC$.  Second,
for any objects $A$ and $B$ define 
\[ \cC_E(A,B)=E(B\times A).
\]
Really what we mean here is that $\cC_E(A,B)$ is the underlying
abelian group of $E(B\times A)$.  
Third, define the composition law
\[ \mu_{C,B,A}\colon \cC_E(B,C)\tens \cC_E(A,B) \ra \cC_E(A,C)
\]
by
\[ \mu_{C,B,A}(\alpha\tens \beta)=(\pi_{13})_!\bigl ( (\pi_{12})^*(\alpha) \cdot
{\pi_{23}}^*(\beta) \bigr )
\]
where the $\pi_{rs}$ maps are  the evident ones
\[ \pi_{12}\colon A\times B\times C \ra A\times B, \quad
\pi_{23}\colon A\times B\times C \ra B\times C,
\]
\[
\pi_{13}\colon A\times B\times C\ra A\times C.
\]
We will use the notation
\[ \alpha\circ \beta=\mu_{C,B,A}(\alpha\tens \beta).
\]
Finally, for any object $A$ define $i_A$ to be ${\Delta^A}_!(1)$; that
is, consider the map
\[ E(A) \llra{{\Delta^A}_!} E(A\times A)
\]
and take the image of the unit element of the ring $E(A)$.  Note that
$E(A\times A)$ is a commutative ring and so has a unit element $1$,
but this is not necessarily equal to $i_A$.  One may check (see
Proposition~\ref{pr:Gysin->Cat} below) that this structure makes
$\cC_E$ into a category.

For lack of a better term, we refer to elements of $E(B\times A)$ as
``$E$-correspondences'' from $A$ to $B$.  The category $\cC_E$ itself
will be referred to as  the \mdfn{category of $E$-correspondences}.

\begin{remark}
The construction of the category $\cC_E$ is one that appears countless
times in the algebraic geometry literature, ultimately going back to
Grothendieck.  For the category of algebraic varieties over some field
$k$, forming the category of correspondences with respect to the Chow
ring functor is the first step in Grothendieck's attempts to define a
category of motives.  See for example \cite[Section 2]{M}.
\end{remark}

\begin{example}\mbox{}\par
\begin{enumerate}[(a)]
\item Let $G$ be a finite group, let $\cC$ be the category of
finite $G$-sets, and let $\cA$ be the Burnside functor from
Example~\ref{ex:Gysin}(b).
The category $\cC_\cA$ is precisely the category $\Mackey$
mentioned in Section~\ref{se:intro}.
\item Fix a commutative, Noetherian ring $R$, and let $\cC$ be the
subcategory of $\Aff$ consisting of objects $\Spec S$ where $R\ra S$
is sheer and separable.  Then $\cC_{GW}$ is the Grothendieck-Witt
category over $R$, defined in Section~\ref{se:GWcat}.
\item Let $\cC$ be the category of finite sets, and let $E$ be the
Gysin functor from Example~\ref{ex:Gysin}(a).  Then we obtain the
category of correspondences $\cC_E$.   It turns out this category has
a familiar model: it is equivalent to the category of
finitely-generated, free abelian groups.  Proving this is not hard,
but it will also fall out of our general ``reconstruction theorem''
(Theorem~\ref{th:recon}).   See Example~\ref{ex:recon-ab}.
\end{enumerate}
\end{example}

The following proposition details many (and perhaps too many) useful
facts about the category $\cC_E$.  Recall one piece of notation: maps
into products can be unlabelled if there is a self-evident candidate
for how the map projects onto each of the factors.  For example,
if
$f\colon A\ra B$ then $A\ra A\times B$ denotes the evident map
that is the identity on the first factor and $f$ on the second.  

\begin{prop} 
\label{pr:Gysin->Cat}
Suppose given a Gysin functor $E$ on the category $\cC$.
\begin{enumerate}[(a)]
\item The structure described above defines a category $\cC_E$ that is
enriched over abelian groups, where $i_A\in \cC_E(A,A)$ is the
identity map on $A$.  
\item A natural transformation of Gysin functors $E\ra E'$ induces a
functor $\cC_E \ra \cC_{E'}$.  
\item There is a functor $R\colon \cC \ra \cC_E$ that is the identity
on objects and has the property that for $f\colon A\ra B$ in $\cC$ we
have
\[ R_f=(\id_B\times f)^*(i_B)\in E(B\times A)=\cC_E(A,B).
\]
One also has $R_f=(A\ra B\times A)_!(1)$.
\item 
The
category $\cC_E$ has an anti-automorphism $(\blank)^*$ that is the identity on
objects, and for $\alpha \in \cC_E(A,B)$ is given by
\[ \alpha^*=t^*(\alpha) \]
where $t\colon A\times B\ra B\times A$ is the evident isomorphism.
We define $I\colon \cC^{op}\ra \cC_E$ to be the identity on objects,
and to be given on maps by $I(f)=(R_f)^*$.  We often write
$I_f=I(f)$.  If $f\colon A\ra
B$ then $I_f=(f\times \id_B)^*(i_B)=(A\ra A\times B)_!(1)$.  
\item Suppose given $\alpha\in \cC_E(W,Z)$, $f\colon Y\ra W$, $g\colon
Z\ra U$, $f'\colon W\ra Y$,and $g'\colon U\ra Z$.  Then
\begin{enumerate}[(i)]
\item $\alpha\circ R_f=(\id_Z\times f)^*(\alpha)$;
\item $R_g\circ \alpha=(g\times \id_W)_!(\alpha)$;
\item $\alpha\circ I_{f'}=(\id_Z\times f')_!(\alpha)$;
\item $I_{g'}\circ \alpha=(g'\times \id_W)^*(\alpha)$.
\end{enumerate}

\item Given $A\llra{f} B \llla{q} C$ in $\cC$ one has $I_f\circ R_q=(f\times
q)^*(i_B)=(\pi^{A\times_B C}_{AC})_!(1)$ in $\cC_E$.
\item Given $A\llla{p} D \llra{g} C$ in $\cC$ one has $R_p\circ
I_g=(p\times g)_!(i_D)=\bigl ((p\times g)^{D}_{AC}\bigr )_!(1)$.
in $\cC_E$.  

\item Given a pullback diagram in $\cC$
\[ \xymatrix{
Z\ar[r]^g \ar[d]_p & W \ar[d]^q \\
X\ar[r]^f & Y
}
\]
one has $R_p\circ I_g=I_f\circ R_q$ in
$\cC_E$.  

\item If $f$ is an isomorphism in $\cC$ then $R_f=I_f^{-1}=I_{f^{-1}}$.  
\end{enumerate}
\end{prop}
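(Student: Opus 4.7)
The plan is to reduce everything to the identity $f_!(1)=1$ for an isomorphism $f$, an immediate consequence of Lemma~\ref{le:iso-shriek} (since $f_!=(f^*)^{-1}$ and ring maps preserve the unit). Using this I will establish $R_f\circ I_f=i_B$ and $I_f\circ R_f=i_A$, so that $R_f=I_f^{-1}$; the remaining equality $I_f^{-1}=I_{f^{-1}}$ will follow formally from the contravariant functoriality of $I$.

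For $I_f\circ R_f=i_A$, apply part~(f) with $q=f$ (so $C=A$), yielding $(\pi^{A\times_B A}_{AA})_!(1)$. Because $f$ is an isomorphism, the diagonal $\Delta\colon A\to A\times_B A$ is itself an isomorphism, and $\pi^{A\times_B A}_{AA}\circ\Delta=\Delta^A$. Hence the expression equals $\Delta^A_!(1)=i_A$.

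For $R_f\circ I_f=i_B$, apply part~(g) with $p=g=f$ (so $D=A$, $C=B$), yielding $((f\times f)^A_{BB})_!(1)$. Since $(f\times f)^A_{BB}=\Delta^B\circ f$, functoriality of $(-)_!$ gives $\Delta^B_!(f_!(1))=\Delta^B_!(1)=i_B$, where the key identity is invoked once.

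For the last equality, $I=(-)^*\circ R$ by the definition in part~(d), and $(-)^*$ is an anti-automorphism of $\cC_E$, so $I$ is a contravariant functor $\cC^{op}\to\cC_E$. Applied to $f^{-1}\circ f=\id_A$ and $f\circ f^{-1}=\id_B$, this gives $I_f\circ I_{f^{-1}}=i_A$ and $I_{f^{-1}}\circ I_f=i_B$, so $I_{f^{-1}}$ is a two-sided inverse of $I_f$. By uniqueness of inverses, $I_{f^{-1}}=I_f^{-1}=R_f$. I do not foresee any real obstacle: the substance of the argument is the collapse of $A\times_B A$ to $A$ and the triviality of $f_!(1)$, both of which are consequences of $f$ being an isomorphism.
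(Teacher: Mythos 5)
Your proposal does not prove the proposition: it addresses only part (i), the last and by far the easiest assertion, while taking as given essentially everything that carries the content of the statement. You explicitly invoke parts (f) and (g) for the identities $I_f\circ R_f=(\pi^{A\times_B A}_{AA})_!(1)$ and $R_f\circ I_f=((f\times f)^A_{BB})_!(1)$, and you invoke part (d) (the anti-automorphism $(\blank)^*$, hence contravariant functoriality of $I$), which in turn rests on part (c) (functoriality of $R$) and on part (a) (that $\cC_E$ is a category at all, with $i_A$ as identity). None of these are established in your write-up, and they are exactly where the work lies: the unit and associativity verifications for the composition $\mu$ via push-pull and the projection formula, the identity $R_g\circ R_f=R_{gf}$, the relations $i_A^*=i_A$ and $(g\circ f)^*=f^*\circ g^*$, and the four formulas of part (e) from which (f), (g), and (h) are then deduced. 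As it stands, the argument is circular as a proof of the proposition: it derives the easiest clause from the harder ones.

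For what it is worth, the part you did treat is correct and takes a mildly different route from the paper: the paper gets (i) in one line from part (h), applied to the pullback square with vertical maps $\id_A$ and horizontal maps $f$ (which is a pullback since $f$ is an isomorphism), whereas you use (f) and (g) together with $f_!(1)=1$ and the collapse of $A\times_B A$ onto $A$; your identification of $I_f^{-1}$ with $I_{f^{-1}}$ via contravariant functoriality is also fine (note it uses $I(\id_A)=i_A$, i.e.\ $i_A^*=i_A$, again from the proof of (d)). But to turn the proposal into a proof of the proposition you must supply the verifications of (a)--(h), along the lines of the computations with $\pi^{cba}_{ca}$, push-pull, and the projection formula that constitute the actual proof.
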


\begin{proof}
This proof is tedious, but completely formal.  See Appendix~\ref{se:leftover}.
\end{proof}

Our next goal is to observe that the the Gysin functor $E$, which is
both co- and contravariant, extends to a single functor defined on all of
$\cC_E$.  Before embarking on the explanation of this, here is some
useful notation.
If $B$ is an object of $\cC$, note that the abelian group $E(B)$ may be
identified with both $\cC_E(*,B)$ and $\cC_E(B,*)$.  If $x\in E(B)$ we
write $x_*$ for $x$ regarded as an element of $\cC_E(*,B)=E(B\times
*)$ and $\pres{x}$ for $x$ regarded as an element of $\cC_E(B,*)$.  
This notation makes sense if one remembers our general
``right-to-left'' notation; e.g., $x_*$ is $x$ regarded as a map {\it
from} the object $*$.  

Define a functor $E'\colon \cC_E^{op}\ra \Ab$ as follows.  On
objects it is the same as $E$: $E'(A)=E(A)$.  For $g\in \cC_E(A,B)$
define $E'(g)\colon E(B)\ra E(A)$ by
\[ E'(g)(x)= \pres{x} \circ g = \bigl(\pi^{BA}_A\bigr)_!\Bigl[ \bigl (\pi^{BA}_B\bigr)^*(x)
\cdot g \Bigr ].
\]
The fact that this is a functor is immediate from the associativity
and unital properties of the circle product $\circ$ 
(the composition product in $\cC_E$).  

\begin{prop}
\label{pr:Gysin-extend}
The functor $E'\colon \cC_E^{op}\ra
\Ab$ has the property that
$E'(Rf)=f^*$ and $E'(If)=f_!$ for any map $f$ in $\cC$.   
\end{prop}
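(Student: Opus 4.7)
The plan is to prove both equalities by directly invoking the composition identities in parts (v)(i) and (v)(iii) of Proposition~\ref{pr:Gysin->Cat}, together with the canonical identification $E(\ast \times X) \cong E(X)$ induced by the projection isomorphism in $\cC$.

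First I would unwind the definition of $E'$: given $f\colon A\to B$ in $\cC$ and $x\in E(B)$, we have $E'(Rf)(x) = \pres{x}\circ R_f$, where $\pres{x}\in \cC_E(B,\ast) = E(\ast\times B)$ is $x$ under the identification with $E(B)$. Applying part (v)(i) of Proposition~\ref{pr:Gysin->Cat} with $\alpha = \pres{x}$ (so $W=B$, $Z=\ast$) and the map $f\colon Y=A\to W=B$ gives
\[
\pres{x}\circ R_f \;=\; (\id_\ast \times f)^*(\pres{x}).
\]
Under the identification $E(\ast\times X)\cong E(X)$, the map $\id_\ast\times f\colon \ast\times A\to \ast\times B$ corresponds to $f\colon A\to B$ by naturality (this uses only the contravariant functoriality of $E$ applied to the obvious square of projection isomorphisms). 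Hence $\pres{x}\circ R_f = \pres{f^*(x)}$, which says exactly that $E'(R_f) = f^*$.

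For the second equality, note that $I_f\in \cC_E(B,A)$, so $E'(I_f)\colon E(A)\to E(B)$. Given $y\in E(A)$, we have $E'(I_f)(y) = \pres{y}\circ I_f$ with $\pres{y}\in \cC_E(A,\ast)$. Applying part (v)(iii) of Proposition~\ref{pr:Gysin->Cat} with $\alpha=\pres{y}$ (so $W=A$, $Z=\ast$) and the same $f\colon A\to B$ playing the role of $f'$, we get
\[
\pres{y}\circ I_f \;=\; (\id_\ast \times f)_!(\pres{y}).
\]
Again using naturality of the identification $E(\ast\times X)\cong E(X)$, this time with respect to the covariant $\tilde{E}$, the Gysin map $(\id_\ast\times f)_!$ corresponds to $f_!\colon E(A)\to E(B)$. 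Therefore $\pres{y}\circ I_f = \pres{f_!(y)}$, giving $E'(I_f)=f_!$.

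The only substantive step is the intertwining of $(\id_\ast\times f)^*$ with $f^*$ and of $(\id_\ast\times f)_!$ with $f_!$ under the identification $E(\ast\times X)\cong E(X)$; but since $\ast\times X\to X$ is an isomorphism in $\cC$, this follows from Lemma~\ref{le:iso-shriek} and the functoriality of $E$ and $\tilde{E}$ applied to a single obvious commutative square. Consequently I do not expect any genuine obstacle: the content of this proposition is essentially a repackaging of the previously established formulas (v)(i) and (v)(iii).
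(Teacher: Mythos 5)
Your proof is correct and follows essentially the same route as the paper, whose proof consists precisely of invoking Proposition~\ref{pr:Gysin->Cat}(e), parts (i) and (iii); you have merely spelled out the routine identification $E(\ast\times X)\cong E(X)$ (via the projection isomorphism and Lemma~\ref{le:iso-shriek}) that the paper leaves implicit.
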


\begin{proof}
Immediate from Proposition~\ref{pr:Gysin->Cat}(e), parts (i) and
(iii).
\end{proof}

\begin{remark}
Note that $E'$ is not a functor from $\cC_E^{op}$ into $\CRing$.
This would of course be too much to ask, since the transfer maps
$f_!$ do not respect the multiplicative products.
\end{remark}

\subsection{Further properties of \mdfn{$\cC_E$}}
The category $\cC_E$ has some extra structure that we have not yet
accounted for.  The categorical product in $\cC$ induces a symmetric
monoidal product on $\cC_E$: that is, for objects $X$ and $Y$ we
define
\[ X\tens Y = X\times_{\cC} Y.
\]
We must define $f\tens g$ for $f\in \cC_E(X,X')$ and $g\in
\cC_E(Y,Y')$.  We do this by
\[ f\tens g= \bigl( t^{X'Y'XY}_{X'XY'Y}\bigr)^*(f\tens g).
\]
This formula appears self-referential, but the two tensor symbols mean
something different: in the second case, we have $f\in E(X'\times X)$
and $g\in E(Y'\times Y)$ and $f\tens g$ is the element in $E(X'\times
X\times Y'\times Y)$ that was introduced in Defintion~\ref{de:Gysin}.   
It takes a little work to verify bi-functoriality.  
The unit object is $S=*$, the terminal object of $\cC$ (note that this
is not a terminal object of $\cC_E$).  The symmetry isomorphism
$\tau_{XY}\in \cC_E(X\tens Y,Y\tens X)$ is defined to be
\[ \tau_{XY}=R(t_{XY}) \]
where $t_{XY}\colon X\times Y\ra Y\times X$ is the canonical
isomorphism in $\cC$.  One must verify that the structure we have
defined  satisfies the basic
commutative diagrams for a symmetric monoidal structure, and we again
leave this with simply the remark that it is tedious but not challenging.

We can also define function objects in $\cC_E$.  For objects $X$ and
$Y$ define
\[ F(X,Y)=X^*\tens Y
\]
where $(\blank)^*$ is the anti-automorphism from
Proposition~\ref{pr:Gysin->Cat}(d). Of course the object $X^*$ is
exactly equal to $X$, but we wrote $X^*$ because this is more
compatible with the way the maps work:
for $g\colon Y\ra Y'$ define
$F(X,g)$ to be the map $i_{X^*}\tens g$, and for $f\colon X\ra X'$ define
$F(f,Y)$ to be the map $I_f\tens i_Y$.  

At this point it is useful to recall the notion of dualizability in
symmetric monoidal categories.  See Appendix~\ref{se:dual}.  In this
paper we will use the term {\bf tensor category\/} to signify a
symmetric monoidal category that is also enriched over abelian groups,
having the property that the tensor product of morphisms is bilinear.
A tensor category is {\it closed\/} if it is equipped with function
objects related to the tensor by the usual adjunction formula, which
is required to be linear.

With the above notions in place, we leave the reader to check the following:

\begin{prop}
The above structure makes $\cC_E$ into a closed tensor category
in which every object is dualizable.
Moreover, every object is isomorphic to its own dual.  
\end{prop}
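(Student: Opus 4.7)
The plan is to exhibit every object $X$ as self-dual by producing coevaluation and evaluation maps, and then to deduce the closed structure from this duality together with the already-defined symmetric monoidal structure. Given the chain of identifications $\cC_E(*,X\tens X)=E(X\times X)=\cC_E(X\tens X,*)$, the natural candidates for $\eta_X$ and $\epsilon_X$ are both the element $i_X=(\Delta_X)_!(1)\in E(X\times X)$, which is already distinguished as the identity morphism on $X$ in $\cC_E$. This choice is forced by the heuristic that in any ``correspondence-like'' category, self-duality should be witnessed by the diagonal.

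First I would write down these duality data and unwind the two triangle identities. The composite $(\epsilon_X\tens\id_X)\circ(\id_X\tens\eta_X)\colon X\to X$, expanded via the formulas for $\tens$ and $\circ$ from Proposition~\ref{pr:Gysin->Cat}, becomes an element of $E(X\times X)$ obtained by pulling $i_X\tens i_X$ back along permutations of factors of $X^4$, multiplying, and pushing forward along the outer projection. After applying the push-pull axiom to the pullback squares built from the diagonal $\Delta_X$ and the various projections, and using Lemma~\ref{le:Gysin-product} together with the projection formula, this composite collapses to $(\Delta_X)_!(1)=i_X=\id_X$ in $\cC_E(X,X)$. The symmetric triangle identity is verified identically.

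With self-duality established, the closed structure is essentially formal. The object $F(X,Y)=X^*\tens Y$ carries maps $F(f,Y)=I_f\tens i_Y$ and $F(X,g)=i_{X^*}\tens g$ as defined, and the adjunction isomorphism $\cC_E(Z\tens X,Y)\iso \cC_E(Z,F(X,Y))$ is given by composing with $i_Y\tens\eta_X$ and $\epsilon_X\tens i_Y$ (suitably interpreted). Bilinearity of the tensor on morphisms, built into the definition of $f\tens g$ as a pullback of an exterior product $a\tens b$ (which is $\Z$-bilinear in each variable by Definition~\ref{de:Gysin}), makes this adjunction linear, so $\cC_E$ is a closed tensor category. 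Finally, $X^*=X$ as objects literally (only the action on morphisms changes), so every object is visibly isomorphic to its dual.

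The main obstacle will be the bookkeeping in the triangle identity, since the composite $(\epsilon_X\tens\id_X)\circ(\id_X\tens\eta_X)$ nominally lives on the product $X\times X\times X\times X$ and requires tracking several transposition maps $t^{\cdots}_{\cdots}$ alongside a sequence of push-pull reductions. Once the correct pullback square (the one expressing that $\Delta_X\times\id_X$ and $\id_X\times\Delta_X$ intersect transversally over $X^3$) is identified, the rest is a mechanical application of the Gysin axioms, so no ingredient beyond Proposition~\ref{pr:Gysin->Cat} and Definition~\ref{de:Gysin} is needed.
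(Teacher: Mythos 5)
Your proposal is correct and follows essentially the same route as the paper: the paper also takes $\cev_X=\ev_X=i_X=\Delta_!(1)$ under the identifications $\cC_E(S,X\tens X)=E(X\times X)=\cC_E(X\tens X,S)$ and dismisses the remaining verifications as tedious but routine. The key pullback square you identify (of $\Delta_X\times\id_X$ against $\id_X\times\Delta_X$ over $X\times X\times X$) is precisely the one the paper uses later in the analogous self-duality argument of Proposition~\ref{pr:I-dual}, so your sketch of the triangle identities is the intended computation.
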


\begin{proof}
Tedious, but routine.  Perhaps the only thing that needs remark is
that the evaluation and co-evaluation morphisms for an object $X$ are
\[ \cev_X=i_X\in E(X\times X)=E(X\times X\times *)=\cC_E(*,X\times
X)=\cC_E(S,X\tens X)
\]
and
\[ \ev_X=i_X\in E(X\times X)=E(*\times X\times X)=\cC_E(X\times
X,*)=\cC_E(X\tens X,S).
\]
\end{proof}

The following proposition is easy but important.  It will be used
implicitly in several later calculations.  

\begin{prop}
\label{pr:RI-tensor}
Let $f\colon A\ra B$ and $g\colon X\ra Y$ be maps in $\cC$.  Then
$R(f\times g)=Rf\tens Rg$ and $I(f\times g)=If\tens Ig$.  
\end{prop}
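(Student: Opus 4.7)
The plan is to reduce both identities to the push-product axiom together with the fact that the symmetry isomorphism $t$ in $\cC$ satisfies $t^* \circ t_! = \id$ (Lemma~\ref{le:iso-shriek}).

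First I would use the second formula in Proposition~\ref{pr:Gysin->Cat}(c), which gives $R_h = h'_!(1)$ where $h'\colon A \to B\times A$ is the map $(h,\id_A)$ when $h\colon A \to B$. Applied to $f\times g\colon A\times X \to B\times Y$, this yields
\[ R(f\times g) = \bigl((f\times g,\id_{A\times X})\colon A\times X \to (B\times Y)\times(A\times X)\bigr)_!(1). \]

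Next I would unpack the $\cC_E$-tensor definition: by construction $Rf \tens Rg = t^*\bigl(Rf \tens_{\mathrm{ext}} Rg\bigr)$, where $t\colon B\times Y\times A\times X \to B\times A\times Y\times X$ swaps the middle factors, and the external tensor uses the formula from Definition~\ref{de:Gysin}. Then by the push-product axiom applied to the maps $(f,\id_A)\colon A\to B\times A$ and $(g,\id_X)\colon X\to Y\times X$,
\[ Rf \tens_{\mathrm{ext}} Rg = \bigl((f,\id_A)\times(g,\id_X)\bigr)_!(1\tens 1) = \bigl((f,\id_A)\times(g,\id_X)\bigr)_!(1). \]

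The key combinatorial step, which is the only substantive point, is to observe the commuting triangle in $\cC$
\[ (f,\id_A)\times(g,\id_X) \;=\; t\circ (f\times g,\id_{A\times X}), \]
verified by chasing a point $(a,x)$ through both sides. Applying $(\blank)_!$ and using Lemma~\ref{le:iso-shriek} (so $t_! = (t^*)^{-1}$) then yields
\[ t^*\bigl((f,\id_A)\times(g,\id_X)\bigr)_!(1) = (f\times g,\id_{A\times X})_!(1), \]
which is exactly $R(f\times g) = Rf\tens Rg$. The argument for $I(f\times g) = If \tens Ig$ is identical, replacing the graph $(h,\id)$ by $(\id,h)$ and using the corresponding formula for $I_h$ from Proposition~\ref{pr:Gysin->Cat}(d); the same swap isomorphism $t$ relates the two orderings of factors. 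The only ``obstacle'' is purely bookkeeping --- keeping track of which factors sit in which slots --- but there is no nontrivial mathematical content beyond push-product plus the iso-invertibility lemma.
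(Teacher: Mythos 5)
Your proof is correct and follows essentially the same route as the paper: write the maps as pushforwards of units via Proposition~\ref{pr:Gysin->Cat}(c)/(d), combine them with the Push-Product Axiom, and absorb the transposition $t^*$ into the shriek. The only cosmetic difference is that you handle the transposition step via Lemma~\ref{le:iso-shriek} (i.e.\ $t^*t_!=\id$) while the paper invokes Push-Pull directly, and you spell out the $R$ case where the paper spells out the $I$ case; these are interchangeable.
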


\begin{proof}
Using Proposition~\ref{pr:Gysin->Cat}(d) and the definition of tensor product,
we have
\begin{align*}
 If\tens Ig & =(t^{AXBY}_{ABXY})^* 
\Bigl [ 
(A \ra AB)_!(1)\tens (X\ra XY)_!(1)
\Bigr ]\\
& =(t^{AXBY}_{ABXY})^* \Bigl [
(A\times X\ra A\times B\times
X\times Y)_!(1)\Bigr ]
\\
&= (A\times X\ra A\times X\times B\times Y)_!(1) \\
&= I(f\times g).
\end{align*}
The second equality uses the Push-Product Axiom, the third equality
uses Push-Pull, and the last equality is
Proposition~\ref{pr:Gysin->Cat}(d) again.
\end{proof}

We close this section by returning to the functor $E'$ from
Proposition~\ref{pr:Gysin-extend}.  The following proposition is not needed, but we record it for
completeness.  The proof is left to the reader.

\begin{prop}
Let $\cC$ be a finitary lextensive category, and let $\cA$ be the
Burnside functor for $\cC$.  Let $E$ be a Gysin functor on $\cC$.  
\begin{enumerate}[(a)]
\item 
The unit maps $\Z\ra E(X)$ and pairings $E(X)\tens E(Y)\ra E(X\tens
Y)$ provide $E'$ with the structure of lax symmetric monoidal functor.

\item The association $E\mapsto E'$ gives
a bijection between Gysin functors on $\cC$ and lax symmetric monoidal
functors $\cC_{\cA}^{op}\ra \Ab$.  
\end{enumerate}
\end{prop}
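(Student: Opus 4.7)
For (a) I would exhibit the lax monoidal structure explicitly on $E'$, and for (b) construct an inverse $F\mapsto E$ to the assignment $E\mapsto E'$ and check the two assignments are mutually inverse.

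\emph{Part (a).} The proposed structure maps are the external products $\phi_{X,Y}\colon E(X)\tens E(Y)\ra E(X\times Y)$, $a\tens b\mapsto a\tens b$, together with the unit $\Z\ra E(*)$ sending $1$ to the ring unit. Associativity, unitality, and symmetry of $\phi$ are routine consequences of the associativity and commutativity of the ring multiplications on each $E(\blank)$, together with compatibility of pullback with the external product. The substantive content is naturality in $\cC_E^{op}$: for $g\in\cC_E(X,X')$ and $h\in\cC_E(Y,Y')$ one must verify
\[
E'(g)(a)\tens E'(h)(b)=E'(g\tens h)(a\tens b).
\]
Both sides are bilinear in $(g,h)$, and by Corollary~\ref{co:intro-RDI} every morphism in $\cC_E$ is a sum of composites of generators $Rf$, $Da$, $If$, so it suffices to check naturality when $g$ and $h$ each come from one of these three types. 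The $R$-case reduces to pullback naturality of the external product; the $I$-case is exactly the Push-Product axiom, combined with $I(f\times g)=If\tens Ig$ (Proposition~\ref{pr:RI-tensor}); the $D$-case follows from compatibility of internal multiplication with the external product inside the ring $E(X\times Y)$ and the rewriting rule $Da\circ Rf=Rf\circ D(f^*a)$ of Theorem~\ref{th:intro-RDI}.

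\emph{Part (b).} Given a lax symmetric monoidal $F\colon\cC_\cA^{op}\ra\Ab$ with structure maps $\phi^F$ and unit $\eta^F\colon\Z\ra F(*)$, define a Gysin functor by $E(X):=F(X)$, $f^*:=F(R_f)$, and $f_!:=F(I_f)$ for $f$ in $\cC$. Equip $E(X)$ with the multiplication
\[
E(X)\tens E(X)\llra{\phi^F_{X,X}} F(X\times X)\llra{F(R_\Delta)} F(X),
\]
and with unit the image of $\eta^F(1)$ under $F(R_{X\ra *})$. Commutativity, associativity, and unitality of this multiplication follow from the corresponding coherence axioms on $F$, together with cocommutativity and coassociativity of $\Delta$ inside $\cC$ (transported through $R$ into $\cC_\cA$). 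The Zero and Sums axioms for $E$ follow from the biproduct structure on $\cC_\cA$ induced by $\amalg$ and additivity of $F$. The Push-Product axiom is naturality of $\phi^F$ applied to the identity $I(f\times g)=If\tens Ig$. The Push-Pull axiom is the image under $F$ of the relation $I_f\circ R_g=R_p\circ I_q$ from Proposition~\ref{pr:Gysin->Cat}(h), which already holds in $\cC_\cA$.

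\emph{Mutual inverse and main obstacle.} That the two assignments are mutually inverse reduces to three observations: $E'(Rf)=f^*$ and $E'(If)=f_!$ by Proposition~\ref{pr:Gysin-extend}; the recovered multiplication $\Delta^*\circ\phi_{X,X}$ equals the original multiplication on $E(X)$ because $\Delta^*(a\tens b)=ab$; and morphisms in $\cC_\cA$ are generated by the $R$'s and $I$'s under composition and sum (Corollary~\ref{co:intro-RDI} applied to $\cA$). The main obstacle I anticipate is the bookkeeping around the twist isomorphisms $t^{XYXY}_{XXYY}$ in the $D$-case naturality check of (a), and verifying in (b) that the reconstructed multiplication is associative and commutative on the nose—each of these requires tracking how $F$ interacts with the associator and symmetry of $\cC_\cA$, though nothing beyond routine coherence arguments should be needed.
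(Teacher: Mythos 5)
The paper records this proposition without proof (``The proof is left to the reader''), so there is no argument of the author's to compare yours against; judged on its own terms, your outline is the natural one and most of it is sound.  In (a), the reduction to the generators $Rf$, $Da$, $If$ is legitimate because both sides of the naturality square are additive in each of $g,h$, every map of $\cC_E$ is a sum of composites of generators, and the compatible-pairs class is closed under composition via the interchange law for $\tens$ in $\cC_E$; the $D$-case works because $E'(Da)$ is just multiplication by $a$ and $Da\tens Db=D(a\tens b)$.  In (b), the reconstruction $f^*=F(R_f)$, $f_!=F(I_f)$, multiplication $F(R_\Delta)\circ\phi^F_{X,X}$ is the right one.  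One small omission in your ``mutual inverse'' step: besides your three observations you must also check that the external product of the reconstructed $E$ agrees with $\phi^F$ (equivalently, that the round trip returns $F$ \emph{with its structure maps}); this follows from naturality of $\phi^F$ against $R\pi_X,R\pi_Y$ together with $R\Delta_{X\times Y}\circ$-composition, since $(\pi_X\times\pi_Y)\circ\Delta_{X\times Y}=\id$, and is worth writing out.

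The genuine gap is your appeal to ``additivity of $F$'' in part (b).  A lax symmetric monoidal functor $\cC_\cA^{op}\ra\Ab$ of plain categories need not be additive on hom-groups: for example, postcomposing any $E'$ with the strong symmetric monoidal but non-additive endofunctor $A\mapsto A\tens A$ of $\Ab$ yields a lax symmetric monoidal functor $\cC_\cA^{op}\ra\Ab$ that is not additive in the morphism variable, whereas every $E'$ is; so on the naive reading the bijection in (b) is not even true, and additivity is precisely where the content lies.  You use it in three essential places: to get $F(\emptyset)=0$ and $F(X\amalg Y)\iso F(X)\oplus F(Y)$ from the biproduct structure of $\cC_\cA$ (lax monoidality says nothing about $\amalg$, which is not the monoidal product), and to conclude $F=E'$ from agreement on $R_f$ and $I_g$, since a general morphism of $\cC_\cA$ is only a $\Z$-linear combination of spans $R_f\circ I_g$.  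The repair is to read the statement, as the paper's conventions for tensor categories suggest, as concerning $\Ab$-enriched (additive) lax symmetric monoidal functors, and to say so explicitly at the outset, noting that every $E'$ is indeed additive so the two sides match; with that hypothesis made explicit, your argument goes through.
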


%%%%%%%%%%%%%%%%%%%%%%%%%%%%%%%%%%%%%%%%%%%%%%%%%%%%%%%%%%%%%%%%%%%%%%%%%%%%%%%%%%

%\newpage

\section{Gysin schema and the reconstruction theorem}
\label{se:recon}

We have seen that given a Gysin functor $E$ on a finitary lextensive
category $\cC$, there is an associated symmetric monoidal category
$\cC_E$ called the category of $E$-correspondences.  One could try to
run this process in reverse: given a symmetric monoidal category
$\cD$, what do you need to know in order to guarantee that $\cD$ is
the category of $E$-correspondences for an appropriately chosen $E$
and $\cC$?
We might term this the ``reconstruction problem'': can $\cD$ be
reconstructed as a category of correspondences?  Of course for this to
work one must at
least require that all objects in $\cD$ be self-dual.

Unfortunately, in this form the reconstruction problem is a little
awkward.  The category $\cC_E$ comes equipped with two distinguished
subcategories, one consisting of the forward maps $Rf$ and one
consisting of the backward maps $If$.  If we are just given a
symmetric monoidal category $\cD$, there is no clear way to separate
out analogs of either of these distinguished subcategories.

The way around this problem is to add these special subcategories into
the initial data.  Then the reconstruction problem becomes solvable,
albeit for almost
tautological reasons.  See Theorem~\ref{th:recon} below.

\subsection{Gysin schema}

\begin{defn}
\label{de:schema}
A \dfn{Gysin schema} consists of the following data:
\begin{enumerate}[$\bullet$]
\item A finitary lextensive category $\cC$, together with an explicit
choice $*$ for terminal object and for each objects $X$ and $Y$ of
$\cC$ an explicit choice of product $X\times Y$;
\item A tensor category $(\cD,\tens,S)$;
\item A map of sets $\Theta\colon \ob \cC\ra \ob \cD$ and two functors
$R\colon \cC\ra \cD$ and $I\colon \cC^{op}\ra \cD$;
\item Isomorphisms $\theta_*\colon \Theta(*)\ra S$ and $\theta_{X,Y}\colon \Theta(X\times Y)\ra (\Theta
X)\tens (\Theta Y)$.
\end{enumerate}
This data is required to satisfy the following axioms:
\begin{enumerate}[(1)]
\item $R(X)=\Theta(X)=I(X)$ for all objects $X$ of $\cC$;
\item The data $(R,\theta)$ makes $R$ into a strong symmetric monoidal
functor from $(\cC,\times,*)$ to $(\cD,\tens,S)$.
\item For all maps $f\colon A\ra X$ and $g\colon B\ra Y$ in $\cC$, the
diagram
\[ \xymatrixcolsep{3.5pc}\xymatrix{
I(A\times B)\ar[d]_{\theta_{A,B}}^\iso 
& I(X\times Y) \ar[l]_{I(f\times g)}\ar[d]^{\theta_{X,Y}}_\iso \\
I(A)\tens I(B) & I(X)\tens I(Y) \ar[l]_{I(f)\tens I(g)}
}
\]
is commutative.
\item For every pullback diagram
\[ \xymatrix{
A \ar[d]_p \ar[r]^f & B\ar[d]^q \\
C \ar[r]^g & D
}
\]
in $\cC$ one has $Rf\circ Ip=Iq\circ Rg$.  
\end{enumerate}
We will write the Gysin schema as $\Theta\colon \cC\ra \cD$,
suppressing $R$, $I$, and $\theta$ from the notation.  
\end{defn}

\begin{remark}
There are a couple of odd features about the above definition.  First,
the function $\Theta$ is clearly redundant as it can be recovered from
either $R$ or $I$.  We include $\Theta$ in the definition because it
is often useful to have a notation that does not favor either $R$
or $I$.  Secondly, conditions (2) and (3) could have been made more symmetric
by replacing (3) with the statement that $(I,\theta)$ is strong
symmetric monoidal; we leave the equivalence as an exericse.  The
phrasing from the definition makes applications a little easier, as
there is a bit less to verify: in practice one looks for a ``nice enough''
functor $R$ that admits transfer maps satisfying (3) and (4).   
\end{remark}

\begin{example} 
\label{ex:Gysin-schema}
One readily checks that the following are examples of
Gysin schema:
\begin{enumerate}[(a)]
\item Fix a finite group $G$, and let $\cD$ be the $G$-equivariant
stable homotopy category of genuine $G$-spectra.  Let $\cC$ be the
category of finite $G$-sets, and let $R(X)=\Sigma^\infty(X_+)$.  The
maps $I(f)$ are the usual transfer maps constructed in stable homotopy
theory.  
\item Let $\cD$ be the category of finitely-generated free abelian
groups, equipped with the tensor product.  Let $\cC$ be the category
of finite sets.  Let $R(X)$ be the free abelian group on the set $X$,
with its natural functoriality.  If $f\colon X\ra Y$ then let
$I(f)\colon R(Y)\ra R(X)$ send the basis element $[y]$ to $\sum_{x\in
  f^{-1}(y)} [x]$.  
\end{enumerate}
\end{example}

When $X$ is an object of $\cC$ we let $\pi_X$ denote the unique map
$X\ra *$, and $\Delta_X$ denote the diagonal $X\ra X\times X$.  The
subscripts will usually be suppressed when understood.  Note that
$R\pi$ is a map $RX\ra R(*)$, and we have a chosen isomorphism
$R(*)=\Theta(*)\iso S$; so composing these gives a canonical map $RX\ra
S$, which we will usually also denote $R\pi$ by abuse.  Similarly,
$R\Delta$ may be regarded as a map $RX\ra RX\tens RX$.  We use these
conventions for $I\pi$ and $I\Delta$ as well.

\subsection{Transfers and duality}

\begin{prop}
\label{pr:I-dual}
Suppose that $\Theta\colon \cC\ra \cD$ is a Gysin schema.  Then for
every object $X$ in $\cC$, $\Theta X$ is dualizable in $\cD$.  In
fact, $\Theta X$ is self-dual with 
structure maps given by
\[ \xymatrix{
S \ar[r]^-{I\pi} & \Theta(X) \ar[r]^-{R\Delta} & \Theta(X\times X)
\ar[r]^\iso & \Theta X\tens \Theta X
}
\]
and
\[ \xymatrix{
\Theta X\tens \Theta X \ar[r]^\iso & \Theta(X\times X) \ar[r]^-{I\Delta}
& \Theta X \ar[r]^{R\pi} & S.
}
\]
\end{prop}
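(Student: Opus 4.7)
The plan is to verify the two triangle identities for a self-duality: $(\ev \tens \id) \circ (\id \tens \cev) = \id$ and $(\id \tens \ev) \circ (\cev \tens \id) = \id$, after silently identifying $S \tens \Theta X = \Theta X = \Theta X \tens S$ via the unitors. The organizing principle is that, by axiom (2) of Definition~\ref{de:schema}, $R$ is strong symmetric monoidal, so $R(f) \tens R(g)$ corresponds (via the $\theta$-isomorphisms) to $R(f \times g)$; by axiom (3), the same holds for $I$. In particular, $\id_{\Theta X} \tens R(\Delta) = R(\id_X \times \Delta)$ and $I(\Delta) \tens \id_{\Theta X} = I(\Delta \times \id_X)$, and so on. Using these identifications, together with the facts that $I\pi \tens \id$ and $\id \tens I\pi$ are identified (via $\theta$ and the unitors) with $I(\pi_1)$ and $I(\pi_2)$ respectively, the first triangle composite unfolds in $\cD$ to
\[ \Theta X \llra{I(\pi_1)} \Theta(X\times X) \llra{R(\id_X\times \Delta)} \Theta(X\times X \times X) \llra{I(\Delta\times \id_X)} \Theta(X\times X) \llra{R(\pi_2)} \Theta X, \]
where $\pi_1,\pi_2\colon X\times X\to X$ are the two projections.

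The crucial step is to apply axiom (4) to the pullback square
\[ \xymatrix{ X \ar[r]^-\Delta \ar[d]_-\Delta & X\times X \ar[d]^-{\Delta\times \id_X} \\ X\times X \ar[r]^-{\id_X\times \Delta} & X\times X\times X } \]
(which is indeed a pullback, as both composites send $x$ to $(x,x,x)$ and any compatible pair in $X\times X$ must agree in all three coordinates). Axiom (4) yields $I(\Delta\times \id_X)\circ R(\id_X\times \Delta) = R\Delta \circ I\Delta$. Substituting this into the middle of the composite above, one is left with $R(\pi_2) \circ R\Delta \circ I\Delta \circ I(\pi_1)$. Contravariant functoriality of $I$ gives $I\Delta \circ I(\pi_1) = I(\pi_1\circ \Delta) = I(\id_X) = \id$, and covariant functoriality of $R$ gives $R(\pi_2) \circ R\Delta = R(\pi_2\circ \Delta) = R(\id_X) = \id$, so the whole composite collapses to $\id_{\Theta X}$. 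The second triangle identity is verified by an entirely symmetric argument using the same pullback square, with the roles of $\pi_1$ and $\pi_2$ interchanged.

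The only real obstacle here is the bookkeeping associated to the $\theta$-isomorphisms and unitor/associator coherences when translating tensor-with-identity maps into $R$'s and $I$'s of product maps; one must be careful to match each side up correctly. Once this translation is in place, axiom (4) supplies the sole substantive input, and the rest is formal manipulation of functoriality together with the trivial identities $\pi_i\circ \Delta = \id_X$.
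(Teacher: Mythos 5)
Your argument is correct and is essentially the paper's own proof: both hinge on applying axiom (4) to the same pullback square comparing $\Delta\times\id$ and $\id\times\Delta$ into $X\times X\times X$, after using axioms (2)--(3) to rewrite the tensored structure maps as $R$'s and $I$'s of product maps, and then collapsing via functoriality and $\pi_i\circ\Delta=\id$. (One trivial nit: your prose swaps which of $I\pi\tens\id$ and $\id\tens I\pi$ corresponds to $I(\pi_1)$ versus $I(\pi_2)$, but your displayed composite is the right one and the argument is unaffected since both projections split $\Delta$.)
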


\begin{proof}
The key is the pullback diagram 
\[ \xymatrixcolsep{3pc}\xymatrix{
X\ar[r]^\Delta \ar[d]_\Delta & X\times X \ar[d]^{\Delta\times \id} \\
X\times X\ar[r]^-{\id\times \Delta} & X\times X\times X,
}
\]
from which we deduce that $I(\id\tens \Delta)\circ R(\Delta\tens
\id)=R\Delta \circ I\Delta$.  Combining this with axiom (3) from
Definition~\ref{de:schema} gives the first equality below:
\begin{myequation}
\label{eq:I-R}
 (\id\tens I\Delta)\circ (R\Delta\tens \id)=R\Delta\circ I\Delta =
(I\Delta\tens \id)\circ (\id\tens R\Delta).
\end{myequation}
The second equality comes about in the same way, but starting with the
reflection of the above pullback square about its central diagonal.

To prove the proposition we must first check that the composition
\[ \xymatrix{
\Theta X=\Theta X\tens S \ar[r]^-{1\tens I\pi} & \Theta X\tens \Theta X
\ar[r]^-{1\tens R\Delta} &
\Theta X\tens \Theta X\tens \Theta X \ar[r]^-{I\Delta \tens 1}&
\Theta X\tens \Theta X \ar[d]_{R\pi\tens 1} 
\\ &&&S\tens \Theta X =\Theta X
}
\]
equals the identity.  But using (\ref{eq:I-R}) this is equal to
\[ (R\pi\tens 1)\circ R\Delta \circ I\Delta \circ I(1\tens \pi)= R((\pi
\times 1)\circ \Delta) \circ I((1\times \pi)\circ \Delta) =
R(\id)\circ I(\id)=\id.
\]
Note that we have again used axiom (3) of Definition~\ref{de:schema}.

The proof that the composite
\[ \xymatrix{
\Theta X=S\tens \Theta X \ar[r]^-{I\pi\tens 1} & \Theta X\tens \Theta X
\ar[r]^-{R\Delta\tens 1} &
\Theta X\tens \Theta X\tens \Theta X \ar[r]^-{1\tens I\Delta}&
\Theta X\tens \Theta X \ar[d]_{1\tens R\pi} 
\\ &&&S\tens \Theta X =\Theta X
}
\]
equals the identity is entirely similar.  
\end{proof}

The following corollary is also worth recording:

\begin{cor}
Let $\Theta\colon \cC\ra \cD$ by a Gysin schema.  Then given any map
$f\colon X\ra Y$ in $\cC$, the dual of $Rf\colon \Theta X\ra \Theta Y$
(computed using the duality structures provided by Proposition~\ref{pr:I-dual})
is precisely $If\colon \Theta Y\ra \Theta X$.
\end{cor}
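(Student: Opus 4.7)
The plan is to unwind the standard categorical formula for the dual of $Rf$ using the self-duality structures from Proposition~\ref{pr:I-dual}, and then simplify via the axioms of a Gysin schema. Explicitly, $(Rf)^*\colon \Theta Y\to \Theta X$ is the composite
\begin{align*}
\Theta Y=S\tens \Theta Y&\xrightarrow{\cev_{\Theta X}\tens \id} \Theta X\tens \Theta X\tens \Theta Y\\
&\xrightarrow{\id\tens Rf\tens \id}\Theta X\tens \Theta Y\tens \Theta Y \xrightarrow{\id\tens \ev_{\Theta Y}} \Theta X,
\end{align*}
and substituting the formulas for $\cev$ and $\ev$ from the proposition yields
\[
(\id\tens R\pi_Y)\circ (\id\tens I\Delta_Y)\circ (\id\tens Rf\tens \id)\circ (R\Delta_X\tens \id)\circ (I\pi_X\tens \id).
\]

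Using that $R$ is strong symmetric monoidal and that axiom (3) of Definition~\ref{de:schema} allows $I$-factors to be pulled across tensor products, each term rewrites as $R$ or $I$ of a product-with-identity morphism in $\cC$. The composite becomes
\[
R(\id_X\times \pi_Y)\circ I(\id_X\times \Delta_Y)\circ R(\id_X\times f\times \id_Y)\circ R(\Delta_X\times \id_Y)\circ I(\pi_X\times \id_Y).
\]
By functoriality of $R$, the two middle factors combine as $R(g_f\times \id_Y)$, where $g_f=(\id_X,f)\colon X\to X\times Y$ is the graph of $f$.

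The key step is to apply axiom (4) of Definition~\ref{de:schema} to the pullback square
\[
\xymatrix{
X\ar[r]^-{g_f}\ar[d]_{g_f} & X\times Y \ar[d]^{\id_X\times \Delta_Y}\\
X\times Y \ar[r]^-{g_f\times \id_Y} & X\times Y\times Y,
}
\]
which is genuinely a pullback because the equation $(x_1,f(x_1),y_1)=(x_2,y_2,y_2)$ in $X\times Y\times Y$ forces $y_1=y_2=f(x_1)$. Axiom (4) then converts $I(\id_X\times \Delta_Y)\circ R(g_f\times \id_Y)$ into $R(g_f)\circ I(g_f)$, so the five-fold composite collapses to
\[
R(\id_X\times \pi_Y)\circ R(g_f)\circ I(g_f)\circ I(\pi_X\times \id_Y).
\]
Since $(\id_X\times \pi_Y)\circ g_f\colon X\to X\times *\cong X$ is $\id_X$, functoriality gives the leftmost pair $\id_{\Theta X}$; and since $(\pi_X\times \id_Y)\circ g_f\colon X\to *\times Y\cong Y$ is $f$, contravariant functoriality of $I$ gives $I(g_f)\circ I(\pi_X\times \id_Y)=If$. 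The whole composite is therefore $If$, as required.

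The main obstacle is bookkeeping: translating between product-in-$\cC$ and tensor-in-$\cD$ notation using the structure isomorphisms $\theta$, and spotting the graph map $g_f$ whose appearance makes the pullback square work out. Once that pullback is in hand, the rest is a short diagram chase.
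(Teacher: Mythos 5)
Your proof is correct and takes essentially the same route the paper sketches: unpack $\eta$ and $\epsilon$, merge the two $R$-factors into (graph of $f$)$\times\id$, apply axiom (4) of Definition~\ref{de:schema} to the resulting pullback square, and collapse by functoriality of $R$ and $I$ — the paper's square built from $f\times 1\colon X\to Y\times X$ and $\Delta_Y\times 1$ is just the mirror image of yours. The only cosmetic difference is that you compute the transpose with the coevaluation on the left rather than the right, which is harmless here because $t\circ\Delta=\Delta$ makes the structure maps of Proposition~\ref{pr:I-dual} invariant under the twist.
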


\begin{proof}
The dual of $Rf$ is the following composite:
\[\xymatrixcolsep{3pc}\xymatrix{
\Theta Y=\Theta Y\tens S \ar[r]^{1\tens \eta_X} & \Theta Y\tens \Theta X\tens \Theta X
\ar[r]^{1\tens Rf\tens 1} & \Theta Y\tens \Theta Y\tens \Theta X
\ar[d]^{\epsilon_Y\tens 1} \\
&& S\tens \Theta X=\Theta X.
}
\]
One unpacks $\eta$ and $\epsilon$ as $\eta_X=R\Delta_X\circ I\pi_X$
and $\epsilon_Y=R\pi_Y\circ I\Delta_Y$, and then argues precisely as
in the proof of Proposition~\ref{pr:I-dual} but instead using the
pullback diagram
\[ \xymatrixcolsep{3pc}\xymatrix{
X\ar[r]^{f\times 1} \ar[d]_{f\times 1} & Y\times X \ar[d]^{1\times f\times 1} \\
Y\times X\ar[r]^-{\Delta_Y\times 1} & Y\times Y\times X,
}
\]
The details are left to the reader.
\end{proof}

\subsection{The canonical Gysin functor for a Gysin schema}

Suppose $\Theta\colon \cC\ra \cD$ is a Gysin schema.
Define $\piS\colon \cC^{op}
\ra \Ab$ to be the functor given by
\[ \piS(X)=\cD(\Theta X,S).
\]
Note that the abelian groups $\piS(\blank)$
also inherit the structure of a covariant functor:  given $f\colon
X\ra Y$ in $\cC$ define $f_!\colon \piS(X)\ra \piS(Y)$ by the diagram
\[
\xymatrixcolsep{3pc}\xymatrix{
 \piS(X)\ar[r]^{f_!}\ar@{=}[d]&
\piS(Y)\ar@{=}[d]
\\
\cD(\Theta X,S) \ar[r]^{\cD(If,S)} 
& \cD(\Theta Y,S).
}
\]
Moreover, the abelian groups $\piS(X)$ inherit a product: given $a,b\in
\piS(X)$, define $a\cdot b$ to be the composite
\[ \Theta X\llra{R\Delta} \Theta X\tens \Theta X \llra{a\tens b} S\tens S\iso S.
\]
This gives $\piS(X)$ the structure of a commutative ring, and if $f\colon
X\ra Y$ is
a map in $\cC$ then $f^*\colon \piS(Y)\ra \piS(X)$ is a ring
homomorhism.

\begin{prop} If $\Theta\colon\cC\ra\cD$ is a Gysin schema then $\piS\colon
\cC^{op}\ra \CRing$ is a Gysin functor.
\end{prop}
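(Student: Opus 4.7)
My plan is to check the four axioms of Definition~\ref{de:Gysin} in turn, unpacking each assertion about $\piS$ into a statement inside $\cD$ that can be read off either from one of the Gysin schema axioms or from the strong symmetric monoidal structure of $R$. First I would set up the ring structure and contravariance: the product $a\cdot b = (a\tens b)\circ R\Delta_X$ with unit $R\pi_X$ makes $\piS(X)$ a commutative ring because $(R,\theta)$ is strong symmetric monoidal and $\Delta_X,\pi_X$ form a cocommutative comonoid in $\cC$, while bilinearity of $\tens$ in the tensor category $\cD$ gives distributivity. The pullback $f^* = \cD(Rf, S)$ is a ring map because $\Delta_Y\circ f = (f\times f)\circ \Delta_X$ and $\pi_Y\circ f = \pi_X$, and the covariant assignment $f_! = \cD(If,S)$ gives abelian group homomorphisms with $(gf)_! = g_!\circ f_!$ built directly from the contravariant functoriality of $I$.

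Next I would dispatch the compatibility axioms. For the push-product axiom, expand $(f\times g)_!(a\tens b) = (a\tens b)\circ I(f\times g)$ and invoke Gysin schema axiom (3) to rewrite $I(f\times g)$ as $If\tens Ig$ under $\theta$; bilinearity of $\tens$ then yields $f_!(a)\tens g_!(b)$. For the push-pull axiom applied to a pullback square with horizontal edges $f,g$ and vertical edges $p,q$, apply Gysin schema axiom (4) to the reflection of the square across its diagonal (interchanging the roles $f\leftrightarrow p$ and $g\leftrightarrow q$) to obtain the identity $Rp\circ If = Ig\circ Rq$ in $\cD$; precomposition with any $\alpha\in\piS(C)$ then gives $f_!(p^*\alpha) = q^*(g_!\alpha)$.

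The main obstacle, as I see it, is the zero axiom $\piS(\emptyset)=0$ and the behavior-on-sums axiom $\piS(X\amalg Y)\iso \piS(X)\times\piS(Y)$, since the definition of a Gysin schema says nothing directly about $\Theta$ on coproducts or the initial object. My strategy here is to feed the three canonical pullback squares associated to the coproduct inclusions $i_X,i_Y$ through Gysin schema axiom (4), obtaining in $\cD$ the orthogonality relations $I(i_X)\circ R(i_X)=\id$ and $I(i_Y)\circ R(i_Y)=\id$ together with factorizations such as $I(i_Y)\circ R(i_X) = R(\emptyset\ra Y)\circ I(\emptyset\ra X)$ exhibiting the cross terms as maps that traverse $\Theta\emptyset$. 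These identities then display $(i_X)_! \oplus (i_Y)_!$ as a two-sided inverse to $(i_X^*,i_Y^*)$ modulo vanishing of every map that factors through $\Theta\emptyset$, i.e., modulo the assertion that $\Theta\emptyset$ is a zero object of $\cD$; this same vanishing also immediately yields the zero axiom. Establishing that $\Theta\emptyset$ is a zero object is the delicate step, and I would either extract it from the isomorphism $\Theta\emptyset \iso \Theta\emptyset\tens\Theta\emptyset$ (a consequence of $\emptyset\times\emptyset=\emptyset$ together with strong monoidality of $R$) using additional structure on $\cD$, or absorb it into the definition of Gysin schema as an additional natural axiom that $\Theta$ sends finite coproducts to biproducts.
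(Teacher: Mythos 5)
The paper offers no written argument here (its proof reads ``simply a matter of chasing through definitions''), so the real question is whether your chase closes.  Most of it does, and in the only way available: the commutative ring structure on $\piS(X)$ from $R\Delta_X$ and $R\pi_X$, the fact that $f^*$ is a ring map and $f_!$ is additive and functorial, the push-product axiom from schema axiom (3) together with the identification of $a\tens b$ with $(a\tens b)\circ\theta_{X,Y}$, and the push-pull axiom from schema axiom (4) applied to the transposed pullback square (giving $Rp\circ If=Ig\circ Rq$, which is what $\piS$ needs) -- all of this is correct and is exactly the definitional chase the paper leaves to the reader.  You are also right that the zero axiom and the behavior-on-sums axiom are the one genuinely non-formal point, since Definition~\ref{de:schema} never mentions coproducts.

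The gap is in your proposed resolution of that point.  Granting that every map factoring through $\Theta\emptyset$ vanishes, the relations you extract from the three lextensivity pullback squares, namely $I(i_X)\circ R(i_X)=\id$, $I(i_Y)\circ R(i_Y)=\id$, and the cross terms factoring through $\Theta\emptyset$, only show that $(i_X)_!\oplus (i_Y)_!$ is a \emph{right} inverse of $(i_X^*,i_Y^*)$, i.e.\ that the sums map is split surjective.  The other composite sends $c$ to $c\circ\bigl(R(i_X)I(i_X)+R(i_Y)I(i_Y)\bigr)$, and nothing in axioms (1)--(4) forces $R(i_X)I(i_X)+R(i_Y)I(i_Y)=\id_{\Theta(X\amalg Y)}$: the schema axioms only ever equate single composites, never sums of morphisms.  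This is a real failure, not a bookkeeping issue.  For instance, take $\cC$ the category of finite sets, $\cD$ finitely generated free abelian groups, $\Theta(X)=\Z[X\times X]$, $R(f)=\Z[f\times f]$, and $I(f)$ the usual transfer of $f\times f$; since squaring preserves products and pullbacks, all four schema axioms hold and $\Theta\emptyset=0$, yet the restriction map $\piS(X\amalg Y)\iso \Z^{(X\amalg Y)\times(X\amalg Y)}\ra \Z^{X\times X}\times\Z^{Y\times Y}$ is far from injective.  Even the zero axiom is not formal: the constant schema $\Theta(X)=S$, $R(f)=I(f)=\id_S$ (with $\cD$ any tensor category, e.g.\ abelian groups) satisfies all the axioms but has $\piS(\emptyset)=\cD(S,S)\neq 0$; the same example shows that an isomorphism $\Theta\emptyset\iso\Theta\emptyset\tens\Theta\emptyset$ can never force $\Theta\emptyset$ to be null, so your first escape route is closed as well.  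The viable fix is your second alternative: one must assume (as all the motivating examples satisfy, and as the paper's one-line proof tacitly presupposes) that $R(i_X)$, $R(i_Y)$ exhibit $\Theta(X\amalg Y)$ as a direct sum $\Theta X\oplus\Theta Y$ in $\cD$ and that $\Theta\emptyset$ is a zero object; with that added hypothesis both remaining axioms follow immediately, and the rest of your argument stands.
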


\begin{proof}
This is simply a matter of chasing through definitions.
\end{proof}

\begin{remark}
Recall from Definition~\ref{de:Gysin} that if $a\in \piS(X)$ and $b\in \piS(Y)$ then we have the
element $a\tens b\in \piS(X\times Y)$.  It is easy to check that this
is the map
\[\xymatrix{
 \Theta(X\times Y)\ar[r]^{\iso} & \Theta X\tens \Theta Y  \ar[r]^{a\tens b}
& S\tens S =S.
}
\]
\end{remark}

\subsection{Preliminaries on the reconstruction problem}
\label{se:cat->Gysin}

Let $(\cD,\tens,S,F(\blank,\blank))$ be a closed, symmetric monoidal
category in which every object is dualizable.  It turns out all such
categories have a description that is somewhat reminiscent of the
construction of $\cC_E$.

For an object $X$ write $X^*=F(X,S)$, and for $f\colon X\ra Y$ write
$f^*=F(f,S)$.  Let $\ev_X\colon X^*\tens X\ra S$ be the adjoint of the
identity map $X^*\ra F(X,S)$, and let $c_X\colon S\ra X\tens X^*$ be
the coevaluation map guaranteed by duality (see Appendix~\ref{se:dual}).

Define a new category $\cD^{ad}$ as follows.  The objects are the same
as those in $\cD$, and morphisms are given by
\[ \cD^{ad}(X,Y)=\cD(Y^*\tens X,S).
\]
If $\alpha\in \cD^{ad}(X,Y)$ and $\beta\in \cD^{ad}(Y,Z)$ then
$\beta\circ\alpha$ is given as follows:
\[\xymatrixcolsep{2.6pc}\xymatrix{
Z^*\tens X \ar[r]^-\iso & Z^*\tens S\tens X \ar[r]^-{1\tens c_Y\tens 1} & Z^*\tens Y\tens
Y^*\tens X \ar[r]^-{\beta\tens \alpha} & S\tens S \ar@{=}[r] & S.
}
\]
One readily checks that this composition is associative, and $\ev_X\in
\cD^{ad}(X,X)$ is a two-sided identity.  

There is a functor $\Gamma\colon \cD\ra \cD^{ad}$ defined as follows.
It is the identity on objects, and given $f\colon X\ra Y$ we let
$\Gamma f\in \cD^{ad}(X,Y)=\cD(Y^*\tens X,S)$ be the composite
\[ \xymatrix{
 Y^*\tens X \ar[r]^{\id\tens f} & Y^*\tens Y \ar[r]^-{\ev_Y} & S.
}
\]
The check that this is indeed a functor is best done using the
graphical calculus for closed symmetric monoidal categories (see
\cite{BS} for an expository account of this).  If
$f\colon X\ra Y$ and $g\colon Y\ra Z$ are maps in $\cD$, then
$\Gamma(gf)$ and $(\Gamma g)(\Gamma f)$ are the composite maps
represented by the following diagrams:

\begin{picture}(300,130)(0,-30)
\put(30,0){\includegraphics[scale=0.4]{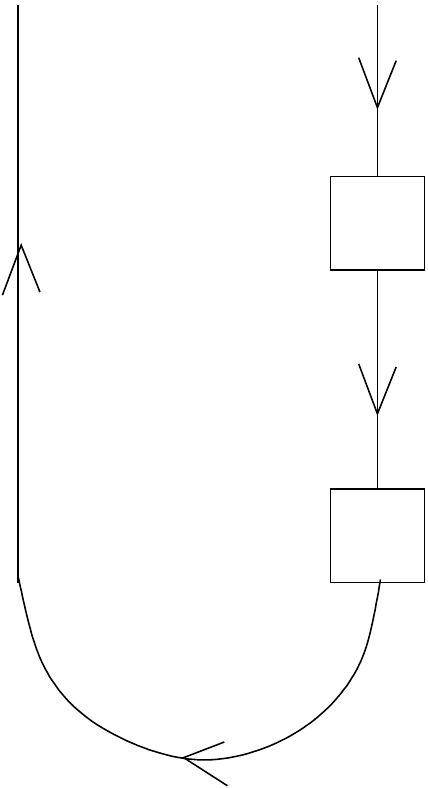}}
\put(180,0){\includegraphics[scale=0.4]{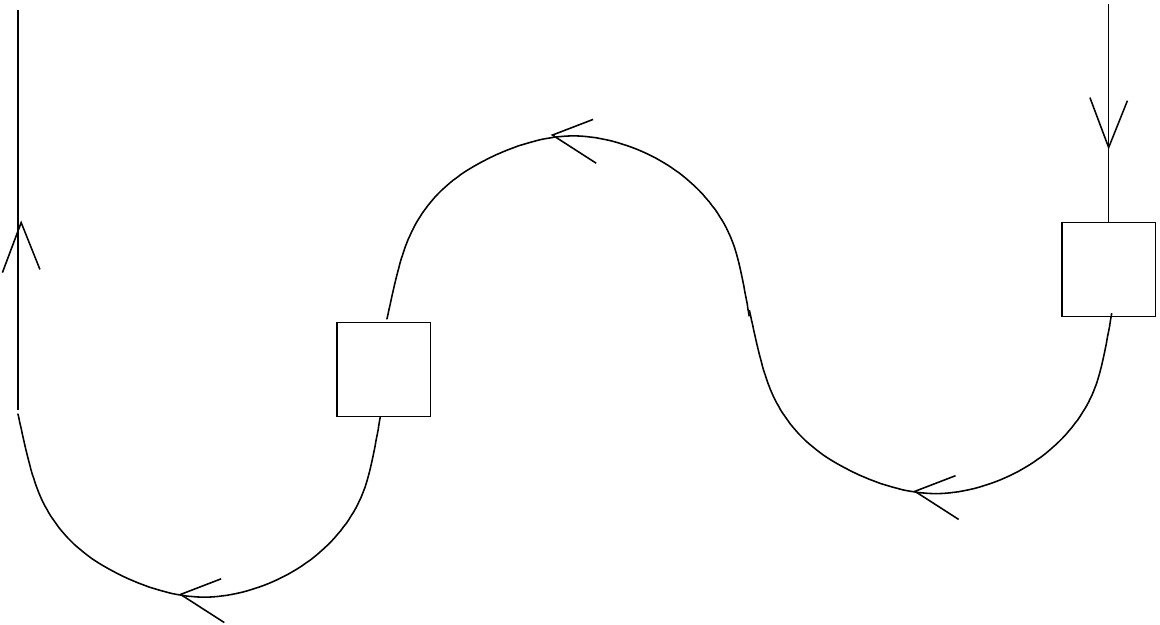}}
\put(72,64){$\scriptstyle{f}$}
\put(72,28){$\scriptstyle{g}$}
\put(306,39.3){$\scriptstyle{f}$}
\put(222,28){$\scriptstyle{g}$}
\put(35,-13){$\Gamma(g\circ_\cD f)$}
\put(220,-13){$\Gamma(g)\circ_{\cD^{ad}}\Gamma(f)$}
\end{picture}

\noindent
The graphical calculus clearly shows these composites to be identical in
$\cD$.

\begin{prop}
\label{pr:D-ad}
The functor $\Gamma\colon \cD\ra \cD^{ad}$ is an isomorphism of
categories.  
\end{prop}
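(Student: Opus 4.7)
The plan is to write down an explicit two-sided inverse $\Psi\colon \cD^{ad}\to \cD$ for $\Gamma$. Since $\Gamma$ is the identity on objects, so will $\Psi$ be; all the content therefore lies in constructing mutually inverse bijections on hom-sets. For $\alpha\in\cD^{ad}(X,Y)=\cD(Y^*\tens X,S)$ the natural candidate is
$$\Psi(\alpha)\colon X\iso S\tens X\xrightarrow{c_Y\tens 1_X} Y\tens Y^*\tens X\xrightarrow{1_Y\tens\alpha} Y\tens S\iso Y.$$
This is simply the standard adjunction isomorphism $\cD(Y^*\tens X,S)\iso \cD(X,Y)$ that is available because $Y$ is dualizable.

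The core of the proof is the verification of $\Psi\circ\Gamma=\id$ and $\Gamma\circ\Psi=\id$ on hom-sets. For a map $f\colon X\to Y$, naturality of $c_Y$ allows one to slide $f$ across, reducing $\Psi\Gamma(f)$ to $(1_Y\tens\ev_Y)\circ(c_Y\tens 1_Y)\circ f$, which equals $f$ by one of the triangle identities for the duality $(Y,Y^*,c_Y,\ev_Y)$. Dually, bifunctoriality of $\tens$ rewrites $\Gamma\Psi(\alpha)$ as $\alpha\circ\bigl[(\ev_Y\tens 1_{Y^*})\circ(1_{Y^*}\tens c_Y)\bigr]\tens 1_X$, which equals $\alpha$ by the other triangle identity. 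Both computations are especially transparent in the graphical calculus already invoked in the verification of functoriality of $\Gamma$: each amounts to straightening out a single zigzag of strings.

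Once these bijections are established, the inverse assignment $\Psi$ automatically inherits functoriality from $\Gamma$, so $\Psi$ is a functor and an inverse to $\Gamma$. The main ``obstacle,'' such as it is, is bookkeeping of tensor factors and confirming that the two reductions really are applications of the two triangle identities in their respective forms; there is no content beyond the zigzag axioms for dualizability.
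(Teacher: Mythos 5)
Your proposal is correct and follows essentially the same route as the paper: the inverse is defined on hom-sets by exactly the composite $X\cong S\tens X\xrightarrow{c_Y\tens 1}Y\tens Y^*\tens X\xrightarrow{1\tens\alpha}Y\tens S\cong Y$, and the two composites are checked to be identities via the zigzag (triangle) identities, which the paper carries out as an "exercise in graphical calculus." Spelling out the triangle identities algebraically rather than pictorially is only a cosmetic difference.
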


\begin{proof}
We only need check that the maps $\cD(X,Y)\ra
\cD^{ad}(X,Y)=\cD(Y^*\tens X,S)$ are bijections.  There is an evident
map in the opposite direction that sends a map $h\colon Y^*\tens X\ra S$ to
the composite
\[
\xymatrixcolsep{3pc}\xymatrix{
X \ar@{=}[r] & S\tens X \ar[r]^-{c_Y\tens \id_X} & 
Y\tens Y^*\tens X \ar[r]^-{\id_Y\tens h} & Y\tens S \ar@{=}[r] & Y.
}
\]
Proving that these assignments are inverses to each other 
is another exercise in graphical
calculus.  For example, the composite in one direction sends the map $f\colon X\ra Y$
to the map represented by 

\begin{picture}(300,80)(0,-10)
\put(100,0){\includegraphics[scale=0.4]{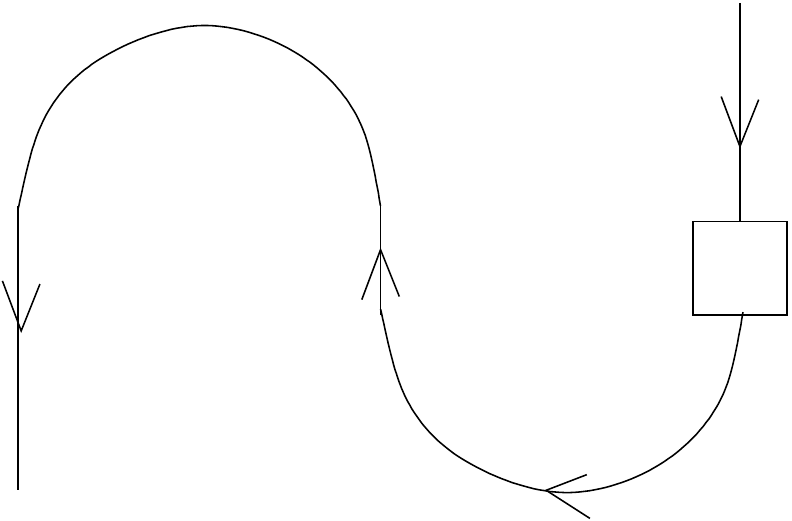}}
\put(184,28){$\scriptstyle{f}$}
\end{picture}

\noindent
and the graphical calculus shows that this is equal to $f$ in
$\cD$.  The other direction is similarly easy.
\end{proof}

Now suppose that $(\cD,\tens,S)$ is a symmetric monoidal category (not
necessarily closed) but where all objects are self-dual: assume that
for every object $X$ in $\cD$ one is supplied maps $\eta_X\colon S\ra
X\tens X$ and $\epsilon_X\colon X\tens X\ra S$ satisfying the
conditions of Definition~\ref{de:dual}. 
 In this context one can reproduce the
construction of $\cD^{ad}$ but without any explicit mention of duals.
Specifically, define $\cD^{\fad}$ to be the category with the same
objects as $\cD$, but where $\cD^{\fad}(X,Y)=\cD(Y\tens X,S)$.  
Given $f\in \cD^{\fad}(X,Y)$ and $g\in \cD^{\fad}(Y,Z)$ define $g\circ
f$ to be the composition
\[\xymatrixcolsep{3pc}\xymatrix{
Z\tens X \ar@{=}[r] & Z\tens S\tens X \ar[r]^-{1\tens \eta_Y\tens 1} &
 Z\tens Y\tens Y\tens X \ar[r]^-{g\tens f} & S\tens S=S.
}
\]
Let $1_X\in \cD^{\fad}(X,X)$ be the map $\epsilon_X\colon X\tens X\ra
S$.  One readily checks that $\cD^{\fad}$ is a category.

\begin{prop}
\label{pr:recon-1}
There is a functor $\Gamma\colon \cD\ra \cD^{\fad}$ that is the 
identity of objects and sends a map $f\colon X\ra Y$ in
$\cD$ to the composite
\[ \xymatrix{
Y\tens X \ar[r]^{\id\tens f} & Y\tens Y \ar[r]^-{\epsilon_Y} & S.
}
\]
The functor $\Gamma$ is an isomorphism of categories.
\end{prop}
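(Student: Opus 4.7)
The plan is to run the same argument used in the proof of Proposition~\ref{pr:D-ad}, observing that nothing there actually required the closed structure on $\cD$ — only the existence of structure maps $\eta_X, \epsilon_X$ satisfying the triangle identities of Definition~\ref{de:dual}. Since the hypothesis here supplies exactly those, the identical graphical calculus applies with $\eta_X$ playing the role of $c_X$ and $\epsilon_X$ playing the role of $\ev_X$.

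First I would verify that $\Gamma$ is a functor. Preservation of identities amounts to showing
\[
\epsilon_Y\circ (\id_Y\tens \id_X) = \epsilon_X \quad\text{(when $Y=X$),}
\]
which is immediate. For preservation of composition, given $f\colon X\ra Y$ and $g\colon Y\ra Z$ in $\cD$, one must check that $\Gamma(g\circ f)$ equals $(\Gamma g)\circ_{\cD^{\fad}}(\Gamma f)$ as maps $Z\tens X\ra S$. Unpacking the definition, the latter is the composite
\[
Z\tens X \llra{1\tens \eta_Y\tens 1} Z\tens Y\tens Y\tens X
\llra{(\epsilon_Z\circ(1\tens g))\tens (\epsilon_Y\circ(1\tens f))} S,
\]
which collapses via one of the triangle identities (the one that says tensoring $\eta_Y$ onto $Y$ and then evaluating out with $\epsilon_Y$ is the identity on $Y$) to $\epsilon_Z\circ(1\tens gf)=\Gamma(gf)$. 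This is transparent in the graphical calculus — the same string diagram pictured just before Proposition~\ref{pr:D-ad} applies verbatim.

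Next I would exhibit an inverse to $\Gamma$ on hom-sets. Define $\Psi\colon \cD^{\fad}(X,Y)\ra \cD(X,Y)$ by sending $h\colon Y\tens X\ra S$ to the composite
\[
X = S\tens X \llra{\eta_Y\tens 1} Y\tens Y\tens X \llra{1\tens h} Y\tens S = Y.
\]
To check $\Psi\circ\Gamma=\id$, apply $\Psi$ to $\Gamma f=\epsilon_Y\circ(1\tens f)$; the resulting composite contains the pattern $(1\tens \epsilon_Y)\circ(\eta_Y\tens 1)$, which collapses to the identity by the second triangle identity, leaving just $f$. To check $\Gamma\circ\Psi=\id$, apply $\Gamma$ to $\Psi(h)$ and use the same triangle identity on the $Y$-strand to recover $h$. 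Both checks are one-line exercises in the graphical calculus.

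The only place where there is any subtlety is functoriality, and even there the work reduces to the triangle identities; there is no hard obstacle, just bookkeeping. Bijectivity on objects is by construction, and $\Gamma$ is an isomorphism (not merely an equivalence) because both it and its inverse are defined without passage to isomorphism classes.
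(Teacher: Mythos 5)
Your proposal is correct and is essentially the paper's approach: the paper leaves this as ``a simple exercise,'' the intended argument being precisely what you do, namely rerunning the zig-zag computation of Proposition~\ref{pr:D-ad} with $\eta_X,\epsilon_X$ in place of $c_X,\ev_X$, using only the interchange law and the triangle identities of Definition~\ref{de:dual}. One harmless quibble: your two inverse checks actually use the two \emph{different} triangle identities (the check $\Psi\circ\Gamma=\id$ uses $(\id_Y\tens\epsilon_Y)\circ(\eta_Y\tens\id_Y)=\id_Y$, while $\Gamma\circ\Psi=\id$ uses $(\epsilon_Y\tens\id_Y)\circ(\id_Y\tens\eta_Y)=\id_Y$), but since both are assumed this does not affect the argument.
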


\begin{proof}
A simple exercise.
\end{proof}

\subsection{The main reconstruction theorem}

Recall that $\cC_{(\piS)}$ denotes the category of correspondences
associated to the Gysin functor $\piS$.  

\begin{thm}
\label{th:recon}
Assume given a Gysin schema $\Theta\colon \cC\ra \cD$. 
Then there is full and faithful functor of 
categories $\cC_{(\piS)} \ra \cD$ that is the identity
on objects and sends a map $f\in \cC_{\piS}(X,Y)=\cD(Y\tens X,S)$ to
the composite
\[ \xymatrixcolsep{3pc}\xymatrix{
X \ar@{=}[r] & S\tens X \ar[r]^-{\eta_Y\tens \id} & Y\tens Y\tens X
\ar[r]^-{\id\tens f} & Y\tens S \ar@{=}[r] & Y.
}
\]
\end{thm}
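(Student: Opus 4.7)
The plan is to realize $\Psi$ as a composition of three well-understood pieces and then verify functoriality. First, on objects let $\Psi$ be the identity. On hom sets, factor the desired map as
\[
\cC_{(\piS)}(X,Y) = \piS(Y\times X) = \cD\bigl(\Theta(Y\times X),S\bigr)
\xrightarrow{\;\theta_{Y,X}^{*}\;}
\cD\bigl(\Theta Y\tens \Theta X,\, S\bigr)
\xrightarrow{\;\Gamma^{-1}\;}
\cD(\Theta X,\Theta Y),
\]
where the first bijection is precomposition with the monoidal iso $\theta_{Y,X}$, and $\Gamma^{-1}$ is the inverse of the isomorphism from Proposition~\ref{pr:recon-1}, using the self-duality of $\Theta Y$ provided by Proposition~\ref{pr:I-dual}. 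Unwinding $\Gamma^{-1}$ with $\eta_{\Theta Y}=R\Delta_Y\circ I\pi_Y$ produces exactly the composite written in the statement. Since $\theta_{Y,X}^{*}$ and $\Gamma^{-1}$ are bijections by construction, fullness and faithfulness come for free once functoriality is established.

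The main content, then, is to check that composition in $\cC_{(\piS)}$ is transported to composition in $\cD^{\fad}$ (equivalently, to composition in $\cD$ via $\Gamma$). Given $\alpha\in \piS(Y\times X)$ and $\beta\in \piS(Z\times Y)$, the composite $\beta\circ\alpha$ is
\[
(\pi_{13})_{!}\bigl((\pi_{12})^{*}\alpha\cdot (\pi_{23})^{*}\beta\bigr)\in \piS(Z\times X),
\]
where the $\pi_{ij}$ are the projections from $X\times Y\times Z$. I would rewrite each ingredient in terms of $\cD$: the pullback $(\pi_{ij})^{*}$ becomes precomposition with $R\pi_{ij}$, the internal product comes from $R\Delta$ on $X\times Y\times Z$, and $(\pi_{13})_{!}$ becomes precomposition with $I\pi_{13}$. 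Using Proposition~\ref{pr:RI-tensor} together with the monoidal iso $\theta$, I would identify
\[
I\pi_{13}\;=\;\id\tens I\pi_Y\tens \id\qquad\text{and}\qquad
R(\id\times\Delta_Y\times\id)\;=\;\id\tens R\Delta_Y\tens \id,
\]
so that the middle of the composite assembles into precisely $\id\tens\eta_{\Theta Y}\tens\id$. This matches the recipe for $\beta\circ_{\cD^{\fad}}\alpha$ appearing in Proposition~\ref{pr:recon-1}, giving functoriality.

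For preservation of identities, note that $i_X=(\Delta_X)_{!}(1)$ in $\cC_{(\piS)}$; translating through the identifications, this is the map $\Theta(X\times X)\to S$ given by $R\pi_X\circ I\Delta_X$, which is exactly the co-unit $\epsilon_{\Theta X}$ of Proposition~\ref{pr:I-dual}. Under $\Gamma^{-1}$, $\epsilon_{\Theta X}$ is sent to the identity of $\Theta X$ (this is the inverse direction of Proposition~\ref{pr:recon-1}), so $\Psi(i_X)=\id_{\Theta X}$.

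The main obstacle is the bookkeeping in the previous paragraph: to keep track of the many monoidal reassociations when turning the triple product $\Theta(X\times Y\times Z)$ into $\Theta X\tens \Theta Y\tens \Theta Z$ and to see that the resulting map equals the $\cD^{\fad}$-composite on the nose. The key leverage is Proposition~\ref{pr:RI-tensor} (which passes $R$ and $I$ through cartesian/tensor products) combined with the Push-Pull axiom applied to the evident pullback square
\[
\xymatrix{
X\times Y\times Z \ar[r]^-{\pi_{13}} \ar[d]_-{\id\times\Delta_Y\times\id} & X\times Z \ar[d]^-{\id\times\pi_Y\times\id=\id} \\
X\times Y\times Y\times Z \ar[r] & X\times Z,
}
\]
which precisely encodes the identity $\eta_{\Theta Y}=R\Delta_Y\circ I\pi_Y$ in the middle tensor slot. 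Once this identification is done, everything else is routine.
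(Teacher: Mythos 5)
Your overall route is the same as the paper's: identify $\cC_{(\piS)}(X,Y)$ with $\cD^{\fad}(X,Y)=\cD(\Theta Y\tens \Theta X,S)$ via $\theta$, check that $i_X=\Delta_!(1)$ corresponds to $\epsilon_{\Theta X}$, check that the Gysin composition corresponds to the $\cD^{\fad}$ composition (with $I\pi_Y$ assembling with $R\Delta_Y$ into $\eta_{\Theta Y}$ in the middle slot), and then transport through the isomorphism of Proposition~\ref{pr:recon-1}. In substance this is correct and is the paper's argument.

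Two of your justifications need repair, though. First, Proposition~\ref{pr:RI-tensor} is a statement about the maps $Rf$, $If$ inside a correspondence category $\cC_E$, not about the schema functors $R,I\colon \cC\ra \cD$; the identifications you want, namely $I\pi_{13}\cong \id\tens I\pi_Y\tens \id$ and $R(\id\times \Delta_Y\times \id)\cong \id\tens R\Delta_Y\tens \id$ after conjugating by $\theta$, are exactly axioms (2) and (3) of Definition~\ref{de:schema}. Second, the square you display at the end is not a pullback (the pullback of its bottom and right edges is $X\times Y\times Y\times Z$, not $X\times Y\times Z$), and no push-pull is needed at that point: the identity $\eta_{\Theta Y}=R\Delta_Y\circ I\pi_Y$ is simply the definition of the coevaluation in Proposition~\ref{pr:I-dual}. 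What actually requires an argument in the middle slot is passing from the internal product, which is computed with the full diagonal of $X\times Y\times Z$, to the map $\id\times \Delta_Y\times \id$: either decompose $\Delta_{X\times Y\times Z}$ into the three factor diagonals and cancel $R\Delta_X$ and $R\Delta_Z$ against the projections $R\pi_X$, $R\pi_Z$ arising from $\pi_{12}^*$ and $\pi_{23}^*$ (this is the chase the paper indicates), or observe directly that $(\pi_{12}^*\alpha)\cdot(\pi_{23}^*\beta)=(\id\times \Delta_Y\times \id)^*(\alpha\tens\beta)$ because pullback maps are ring homomorphisms. With those substitutions your plan goes through as written.
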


\begin{proof}
The proof is easier to understand if we first compare $\cC_{(\piS)}$ to
$\cD^{\fad}$.  Note that the set of objects of these two categories
are identical, and for any objects $X$ and $Y$ we have equalities of
sets
\[ \cC_{(\piS)}(X,Y)= \piS(Y\tens X)=\cD(Y\tens X,S)=\cD^{\fad}(X,Y).
\]
The identity element $i_X\in \cC_{(\piS)}(X,X)=\piS(X\tens X)$ is
$\Delta_!(1)$, which unravelling the definitions equals the composite
\[ \xymatrix{
\Theta X\tens \Theta X\ar[r]^-{I\Delta} & \Theta X \ar[r]^{R\pi} & S,
}
\]
which equals $\epsilon_X$.  This is equal to the identity in
$\cD^{\fad}$.  

Finally, we must compare the composition rules in $\cC_{(\piS)}$ and
$\cD^{\fad}$.  Suppose given $f\in \cD(Y\tens X,S)$ and $g\in
\cD(Z\tens Y,S)$.  The composition $g\circ f$ in $\cC_{(\piS)}$ is given
by the composite
\[ \xymatrix{
\Theta Z\tens \Theta X \ar[r]_\theta^\iso & \Theta (Z\times X) 
\ar[r]^-{Ip} & \Theta (Z\times Y\times X) \ar[d]^-{R\Delta} \\
&& \Theta
(Z\times Y\times X)\tens \Theta(Z\times Y\times X) \ar[d]_\iso^\theta \\
&
& \Theta Z\tens \Theta Y\tens \Theta X \tens \Theta Z\tens \Theta
Y\tens \Theta X\ar[d]^{1\tens 1\tens R\pi_X\tens R\pi_Z\tens 1\tens 1}\\
&& \Theta Z\tens \Theta Y \tens S\tens S\tens \Theta Y\tens \Theta
X\ar[d]^{g\tens 1\tens 1\tens f} \\
&& S\tens S\tens S\tens S \ar@{=}[r] &S
}
\]
where $p\colon Z\times Y\times X\ra Z\times X$ is the evident
projection.
The composition $g\circ f$ in $\cD^{\fad}$ is given by the composite
\[ \xymatrix{
\Theta Z\tens \Theta X \ar[r]^-\iso &\Theta Z\tens S \tens \Theta X
\ar[r]^-{1\tens \eta_Y\tens 1} &
\Theta Z\tens \Theta Y\tens \Theta Y \tens \Theta X \ar[r]^-{g\tens f} & S\tens S
\ar@{=}[r]
& S.
}
\]
A diagram chase shows these two composites to be equal.  This is best
left to the reader, but main idea is to take the first composite and
decompose the diagonal on
$Z\times Y\times X$ into the three diagonals on the
individual components.  The diagonals on $Z$ and $X$ cancel the $\pi_X$
and $\pi_Z$ appearing later, leaving only the diagonal on $Y$.  The
map $Ip$ is equal to $1\tens I\pi_Y\tens 1$, and the $I\pi_Y$
assembles with the $R\Delta_Y$ to make $\eta_Y$.  

At this point we have constructed a functor $\cC_{(\piS)}\ra
\cD^{\fad}$.  It is readily seen to induce isomorphisms on the
Hom-sets, and so it is an isomorphism of categories. Finally, pair
this with Proposition~\ref{pr:recon-1} to get the desired result.
\end{proof}

\begin{example}
\label{ex:recon-ab}
Let $\cD$ be the category of finitely-generated free abelian groups,
and let $\cC$ be the category of finite sets.  Let $\Theta\colon \cC \ra
\cD$ be the free abelian group functor, given the structure of a Gysin
schema as in Example~\ref{ex:Gysin-schema}.  
The associated Gysin functor $\piS$ is precisely the one of
Example~\ref{ex:Gysin}(a).  By Theorem~\ref{th:recon} we conclude that
$\cC_{(\piS)}$ is isomorphic to the category of finitely-generated free
abelian groups.
\end{example}

%%%%%%%%%%%%%%%%%%%%%%%%%%%%%%%%%%%%%%%%%%%%%%%%%%%%%%%%%%%%%%%%%%%%%%%%%%%%

%\newpage

\section{The structure of correspondence categories}
\label{se:struc}

Suppose $E\colon \cC\ra \CRing$ is a Gysin functor.  Our goal is to
better understand how the category of correspondences
$\cC_E$ relates to the original category $\cC$.  Given objects $X$ and
$Y$ in $\cC$, every element $f\in \cC(X,Y)$ gives rise to maps $R_f$
and $I_f$ in $\cC_E$.  In addition, we will see that every element
$a\in E(X)$ gives an endomorphism $D_a$ of $X$ in $\cC_E$.  We will
prove that every map in $\cC_E$ may be written in the form $R_f\circ
D_a\circ I_g$, and we will explain rules for rewriting the composition
of  two such expressions into the same form.

These results do not give a simple picture for the structure of
$\cC_E$, but they do give a reasonable prescription for working with
these categories in specific examples.  In Section~\ref{se:Gysin-Galois} we
explore this in a general ``Galoisien'' setting (where the category
$\cC$ has properties formally similar to the category of $G$-sets,
$G$ is a finite group).

%\item Finally, we establish some basic results about the structure of 
%morphisms in $\cC_E$.  Specifically, there are three special classes
%of morphisms: the $R$-morphisms, the $I$-morphisms, and the
%$D$-morphisms.  Every morphism in $\cC_E$ can be written as a
%composite $R_f\circ
%D_a\circ I_g$, and there are various rules for composing two such things.

\subsection{The diagonal structure}
We will need an extra piece of structure in $\cC_E$ coming  from the
diagonal maps in $\cC$.  
For an object $X$ in $\cC$ let $\Delta\colon X\ra
X\times X$ be the diagonal.  This induces a map of abelian groups
\[ D=\Delta_!\colon E(X) \ra E(X\times
X)=\cC_E(X,X).
\]
The target has two ring structures: it has the generic ring structure
that any $E(Z)$ has, and it has the circle product coming from
composition in $\cC_E$.  It is the latter that we wish to consider:

\begin{prop}
\label{pr:D}
$D\colon E(X)\ra \cC_E(X,X)$ is a ring map, and for any $a\in E(X)$
one has $(Da)^*=Da$.
\end{prop}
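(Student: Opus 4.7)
The plan is to dispatch both statements by direct computations with the Push-Pull axiom and the projection formula, since the diagonal map interacts very cleanly with itself and with the swap.

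For the unit, note that $D(1)=\Delta_!(1)=i_X$ by the definition of $i_X$, so nothing to do. For multiplicativity I want to show $Da\circ Db=D(ab)$. Unwinding the definition of the circle product,
\[
Da\circ Db=(\pi_{13})_!\bigl((\pi_{12})^*\Delta_!(a)\cdot(\pi_{23})^*\Delta_!(b)\bigr)
\]
inside $E(X\times X\times X)$. The key is to simplify each factor by Push-Pull applied to the pullback squares
\[
\xymatrix{
X\times X \ar[r]^-{(\Delta,\id)}\ar[d]_{\pi_1} & X\times X\times X\ar[d]^{\pi_{12}} & X\times X\ar[r]^-{(\id,\Delta)}\ar[d]_{\pi_2} & X\times X\times X\ar[d]^{\pi_{23}} \\
X \ar[r]^\Delta & X\times X, & X \ar[r]^\Delta & X\times X,
}
\]
giving $(\pi_{12})^*\Delta_!(a)=(\Delta,\id)_!(a\tens 1)$ and $(\pi_{23})^*\Delta_!(b)=(\id,\Delta)_!(1\tens b)$. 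Then I use the projection formula together with one more Push-Pull square, namely the pullback of $(\Delta,\id)$ against $(\id,\Delta)$, which is simply $X$ mapping in by $\Delta$ on each side. After simplification, $\Delta^*(a\tens 1)=a$ and $\Delta^*(1\tens b)=b$ combine to $ab\in E(X)$, and the two pushforwards assemble into a single pushforward along the triple diagonal $X\to X\times X\times X$. Composing with $(\pi_{13})_!$ collapses this to $\Delta_!(ab)=D(ab)$.

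For the self-adjoint statement, I compute $(Da)^*=t^*\Delta_!(a)$, where $t\colon X\times X\to X\times X$ is the swap. Since $t\circ\Delta=\Delta$, the square
\[
\xymatrix{
X \ar[r]^\Delta \ar[d]_{\id} & X\times X \ar[d]^{t} \\
X \ar[r]^\Delta & X\times X
}
\]
is a pullback (the fiber of $t$ over $(x,x)$ is a single point). Push-Pull gives $t^*\Delta_!=\Delta_!\,\id^*=\Delta_!$, so $(Da)^*=\Delta_!(a)=Da$.

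Neither part is hard; the main obstacle is purely bookkeeping in the first computation, where one has to coordinate two Push-Pull rewrites, the projection formula, and one final Push-Pull collapse. Once the pullback diagrams are drawn, all the pieces fit together without further subtlety.
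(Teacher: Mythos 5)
Your proof is correct and follows essentially the same route as the paper's: both statements are handled by direct computation with the push-pull axiom and the projection formula, using the swap-compatibility $t\circ\Delta=\Delta$ for the self-duality claim. The only cosmetic differences are that you rewrite \emph{both} factors $(\pi_{12})^*\Delta_!(a)$ and $(\pi_{23})^*\Delta_!(b)$ as pushforwards and then cross them via the pullback of $(\Delta\times\id)$ against $(\id\times\Delta)$ (the paper rewrites only one factor, after which $\pi_{23}\circ(\Delta\times\id)=\id$ and $\pi_{13}\circ(\Delta\times\id)=\id$ make everything collapse without that extra square, though note your route silently uses the projection formula twice), and for $(Da)^*=Da$ you apply push-pull directly to the square involving $t$, whereas the paper invokes $t^*=t_!$ for the isomorphism $t$ --- itself just an instance of push-pull.
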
  

\begin{proof}
Let $a,b\in E(X)$.  We calculate
\begin{align*}
Db\circ Da &= (\pi_{13})_!\bigl [ \pi_{12}^*(\Delta_!b) \cdot
\pi_{23}^*(\Delta_!a) \bigr ] \\
&= (\pi_{13})_!\bigl [ (\Delta\times \id)_!(\pi_1^*b) \cdot
\pi_{23}^*(\Delta_!a) \bigr ] \qquad\qquad\qquad(\text{push-pull})\\
&= (\pi_{13})_!(\Delta\times \id)_!\bigl [ (\pi_1^*b) \cdot
(\Delta\times\id)^*\pi_{23}^*(\Delta_!a) \bigr ] \qquad (\text{proj. formula})\\
&= \pi_1^*b \cdot \Delta_!(a) \\
&= \Delta_!\bigl ( (\Delta^* \pi_1^*b) \cdot a\bigr ) \\
&= \Delta_!(b\cdot a) \\
&= D(ba).
\end{align*}

The second statement in the proposition is proven by 
\begin{align*}
(Da)^*=t^*(Da)=(t_!)^{-1}(Da)=t_!(Da)=t_!(\Delta_! a)=\Delta_!a=Da.
\end{align*}
The second equality is from Proposition~\ref{pr:Gysin->Cat}(i), and
the third equality is because $t=t^{-1}$.    
\end{proof}

\begin{notation}
We will usually write $Da$, or if really necessary $D(a)$, but
sometimes we will write $D_a$ for the same thing.  
\end{notation}

\begin{prop}
\label{pr:R-D}
Suppose given $f\colon X\ra Y$ and $a\in E(Y)$.  Then $Da\circ
R_f=R_f\circ D(f^*a)$ and $I_f\circ Da=D(f^*a)\circ I_f$.  
\end{prop}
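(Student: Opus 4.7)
My plan is to unravel both sides using the translation formulas from Proposition~\ref{pr:Gysin->Cat}(e) and then compare them via the push-pull axiom together with functoriality of the Gysin maps.

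For the first identity, by Proposition~\ref{pr:Gysin->Cat}(e)(i) with $\alpha = Da = \Delta^Y_!(a) \in \cC_E(Y,Y)$ and $R_f$, I would compute
\[ Da \circ R_f = (\id_Y \times f)^*\bigl(\Delta^Y_!(a)\bigr) \in E(Y\times X). \]
By Proposition~\ref{pr:Gysin->Cat}(e)(ii) with $R_f$ and $\alpha = D(f^*a) = \Delta^X_!(f^*a)$,
\[ R_f \circ D(f^*a) = (f\times \id_X)_!\bigl(\Delta^X_!(f^*a)\bigr). \]
Now I would apply the push-pull axiom to the pullback square
\[ \xymatrix{ X \ar[r]^{f} \ar[d]_{\gamma} & Y \ar[d]^{\Delta^Y} \\ Y\times X \ar[r]^{\id_Y\times f} & Y\times Y } \]
where $\gamma = (f, \id_X) \colon X \ra Y\times X$ is the evident map; this gives $(\id_Y\times f)^* \circ \Delta^Y_! = \gamma_! \circ f^*$. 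Finally I would observe that $(f\times \id_X)\circ \Delta^X = \gamma$, so by functoriality $(f\times \id_X)_! \circ \Delta^X_! = \gamma_!$. Chaining these gives
\[ Da \circ R_f = \gamma_!(f^*a) = R_f \circ D(f^*a). \]

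For the second identity, I would repeat the same pattern with (e)(iii) and (e)(iv):
\[ I_f \circ Da = (f\times \id_Y)^*\bigl(\Delta^Y_!(a)\bigr), \qquad D(f^*a) \circ I_f = (\id_X \times f)_!\bigl(\Delta^X_!(f^*a)\bigr), \]
and then use push-pull on the pullback square
\[ \xymatrix{ X \ar[r]^{f} \ar[d]_{\delta} & Y \ar[d]^{\Delta^Y} \\ X\times Y \ar[r]^{f\times \id_Y} & Y\times Y } \]
with $\delta = (\id_X, f)$, combined with the factorization $(\id_X\times f)\circ \Delta^X = \delta$.

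There is no serious obstacle here: the entire argument is a bookkeeping exercise in finding the correct pullback square and applying push-pull plus functoriality. The mildly fiddly part is simply keeping track of the ``right-to-left'' vs.\ ``left-to-right'' conventions so that, in each computation, $\id \times f$ versus $f \times \id$ appears on the correct side (domain/range swap) to match the diagonal factorization; once that is sorted out, the two chains of equalities fall into place immediately.
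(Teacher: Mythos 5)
Your proof is correct, and for the first identity it is essentially the paper's own argument: translate $Da\circ R_f$ and $R_f\circ D(f^*a)$ via Proposition~\ref{pr:Gysin->Cat}(e), apply push-pull to the pullback square comparing $\Delta^Y$ with $\id_Y\times f$, and use functoriality of $(\blank)_!$ through the factorization $(f\times\id_X)\circ\Delta^X=(f,\id_X)$ (note only that, as you drew the squares, the push-pull axiom is being applied to the transposed square, pushing forward along $\gamma$, resp.\ $\delta$, and pulling back along $f$ --- harmless, since pullback squares are symmetric in their two legs). The only divergence is in the second identity: you reprove it by a mirror-image direct computation, whereas the paper deduces it from the first identity in one line by applying the anti-automorphism $(\blank)^*$ of Proposition~\ref{pr:Gysin->Cat}(d), using $(Da)^*=Da$ from Proposition~\ref{pr:D} and $(R_f)^*=I_f$. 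Both routes are sound; the duality argument is slightly shorter and avoids identifying a second pullback square, while your symmetric computation is self-contained and does not invoke Proposition~\ref{pr:D}.
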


\begin{proof}
We compute
\begin{align*}
Da\circ R_f = \Delta_! a\circ R_f = (\id\times f)^*(\Delta_! a)
& = \bigl ((f\times \id)^{X}_{Y\times X}\bigr )_! (f^*a) \\
&=  (f\times \id)_!\Delta_! (f^*a) \\
&= R_f\circ D(f^*a).
\end{align*}
The second and fifth equalities are by
Proposition~\ref{pr:Gysin->Cat}(e), and the third equality is by
push-pull.

To conclude,
the second statement in the proposition follows by applying
$(\blank)^*$ to the first and using Proposition~\ref{pr:D}.
\end{proof}

\begin{remark}
\label{re:twisted-endo}
Let $G=\Aut_\cC(X)$, with the group structure coming from composition.
Note that there is a map $G^{op}\ra \Aut(E(X))$ given by $f\mapsto
f^*$.  
Let $E(X)[\widetilde{G}]$ be the twisted group ring defined as follows: it is spanned by
elements $a[f]$ for $a\in E(X)$ and $f\in G$, and the multiplication is
induced by
\[ a[f]\cdot b[g]=a\bigl ({(f^{-1}})^*b\bigr )[fg].
\]
Then Proposition~\ref{pr:R-D} shows that there is a map of rings
\[ E(X)[\widetilde{G}] \lra \cC_E(X,X), \qquad a[f]\mapsto Da\circ R_f.
\]
In good cases this is an isomorphism: see Proposition~\ref{pr:Galois-endo}
below.  
\end{remark}

\subsection{Initial results on the structure of $\cC_E$}

If we have maps $Y\llla{f} Z \llra{g} X$ and $a\in E(Z)$ then
$R_f\circ D_a\circ I_g$ is a morphism from $X$ to $Y$ in $\cC_E$.
We will refer to such an expression as an $RDI$ formula for the
composite morphism.  
Here are some useful facts that relate these $RDI$ formulas
in $\cC_E$ to pushforwards in $E$:

\begin{prop}
\label{pr:triple}
Suppose given maps $Y\llla{f} Z\llra{g} X$ and $a\in E(Z)$.
Then:
\begin{enumerate}[(a)]
\item
$R_f\circ D_a\circ I_g=\bigl((f\times
g)^{Z}_{YX}\bigr )_!(a)$.  
\item $R_f\circ D_a\circ I_f=D(f_!a)$.
\item $R_f\circ I_f=D(f_!1)$.  
\end{enumerate}
\end{prop}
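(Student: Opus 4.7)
The plan is to prove (a) directly by a two-step application of the composition rules from Proposition~\ref{pr:Gysin->Cat}(e), then to deduce (b) as a specialization of (a), and finally to deduce (c) as a specialization of (b).

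For (a), I would first compute $R_f\circ D_a$ using part (ii) of Proposition~\ref{pr:Gysin->Cat}(e): with $\alpha=D_a=\Delta_!(a)\in E(Z\times Z)=\cC_E(Z,Z)$, part (ii) gives
\[
R_f\circ D_a=(f\times \id_Z)_!(\Delta_!a)=\bigl[(f\times \id_Z)\circ\Delta_Z\bigr]_!(a),
\]
using functoriality of the covariant part of $E$. The composite $(f\times\id_Z)\circ\Delta_Z\colon Z\to Y\times Z$ is $z\mapsto (f(z),z)$. Next, I would post-compose with $I_g$ using part (iii) of Proposition~\ref{pr:Gysin->Cat}(e): with the element $R_f\circ D_a$ playing the role of $\alpha$ and $g\colon Z\to X$ playing the role of $f'$, this yields
\[
(R_f\circ D_a)\circ I_g=(\id_Y\times g)_!\bigl[(f\times \id_Z)\circ\Delta_Z\bigr]_!(a)=\bigl[(\id_Y\times g)\circ(f\times \id_Z)\circ\Delta_Z\bigr]_!(a).
\]
The total composite $Z\to Y\times X$ is $z\mapsto(f(z),g(z))$, which by the notational convention of the introduction is exactly $(f\times g)^{Z}_{YX}$. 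This establishes (a).

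For (b), I would specialize (a) to the case $g=f$, obtaining $\bigl((f\times f)^{Z}_{YY}\bigr)_!(a)$. The map $(f\times f)^{Z}_{YY}\colon Z\to Y\times Y$ factors as $\Delta_Y\circ f$, so by functoriality
\[
\bigl((f\times f)^{Z}_{YY}\bigr)_!(a)=(\Delta_Y)_!\bigl(f_!(a)\bigr)=D(f_!a),
\]
which is (b). For (c), I would take $a=1\in E(Z)$ in (b). Since $D_1=\Delta_!(1)=i_Z$ is the identity of $Z$ in $\cC_E$ by definition, the left side becomes $R_f\circ i_Z\circ I_f=R_f\circ I_f$, and the right side is $D(f_!1)$.

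There is no real obstacle here: once (a) is in hand, (b) and (c) are essentially one-line deductions. The only care needed is bookkeeping of the compositional conventions (domain on the right, target on the left) when invoking parts (ii) and (iii) of Proposition~\ref{pr:Gysin->Cat}(e), and identifying the composite of the three maps $\Delta_Z$, $f\times\id_Z$, $\id_Y\times g$ as the pair-map $(f\times g)^{Z}_{YX}$.
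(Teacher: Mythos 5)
Your proposal is correct and follows essentially the same route as the paper: part (a) is obtained from Proposition~\ref{pr:Gysin->Cat}(e) (parts (ii) and (iii)) together with functoriality of the pushforward, (b) follows by specializing $g=f$ and using $(f\times f)^{Z}_{YY}=\Delta_Y\circ f$, and (c) is the case $a=1$ with $D_1=i_Z$. The only difference is the order in which you compose with $R_f$ and $I_g$ when reducing to pushforwards, which is immaterial.
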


\begin{proof}
For (a) we use Proposition~\ref{pr:Gysin->Cat}(e) to write
\[ R_f\circ D_a\circ I_g=(f\times \id)_! (\id\times g)_!(\Delta_!a) =
\bigl( (f\times g)^{Z}_{YX}\bigr )_!(a).
\]
Part (b) follows from (a) together with $(f\times
f)^{Z}_{YY}=\Delta^Y\circ f$.
Finally, (c) is just the special case of (b) where we take $a=1$ (so $Da=i_X$).
\end{proof}

In fact {\it every\/} morphism in $\cC_E$ can be expressed as an
$RDI$ composition.
This is actually a triviality, but it is 
nevertheless important:

\begin{lemma}
Every element of $\cC_E(X,Y)$ may be written as $R_f\circ D_a\circ
I_g$ for some object $Z$, some maps $f\colon Z\ra Y$, $g\colon Z\ra
X$, and some $a\in E(Z)$.  
\end{lemma}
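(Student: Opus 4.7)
The plan is to exploit Proposition~\ref{pr:triple}(a), which gives the explicit $RDI$ composite formula
\[
R_f \circ D_a \circ I_g = \bigl((f\times g)^Z_{YX}\bigr)_!(a).
\]
The right-hand side lives in $E(Y\times X) = \cC_E(X,Y)$, so to hit an arbitrary element $\alpha \in E(Y\times X)$ we simply need to make the pushforward map $\bigl((f\times g)^Z_{YX}\bigr)_!$ the identity. The natural choice is $Z = Y \times X$ with $f = \pi^{YX}_Y$ and $g = \pi^{YX}_X$ the two projections, since then by construction $(f\times g)^Z_{YX}\colon Y\times X \to Y\times X$ is the identity map.

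Given $\alpha \in \cC_E(X,Y)$, I would therefore take $Z = Y\times X$, $f = \pi^{YX}_Y$, $g = \pi^{YX}_X$, and $a = \alpha \in E(Z)$. Applying Proposition~\ref{pr:triple}(a) and observing that the induced map $(f \times g)^Z_{YX}$ is $\id_{Y\times X}$ (because its two components are already the two projections of $Y \times X$), I obtain
\[
R_f \circ D_a \circ I_g = (\id_{Y\times X})_!(\alpha) = \alpha,
\]
using Lemma~\ref{le:iso-shriek} to identify $(\id)_!$ with $\id$. This realizes $\alpha$ in the required form.

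There is no real obstacle here; the content of the lemma is essentially tautological once Proposition~\ref{pr:triple}(a) is on the table. The only mild subtlety is making sure the notational convention $(f \times g)^Z_{YX}$ is read correctly — the superscript indicating domain $Z$ and subscript the target $Y \times X$ — so that the map built from the two projections is literally the identity rather than the ``external'' product map $Z \times Z \to Y \times X$.
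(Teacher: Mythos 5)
Your proposal is correct and is essentially identical to the paper's proof: the paper also takes $Z=Y\times X$ with $f,g$ the two projections and $a=\alpha$, and cites Proposition~\ref{pr:triple}(a) to conclude. You have simply spelled out the (correct) observation that $(f\times g)^Z_{YX}=\id_{Y\times X}$ in this case, which the paper leaves implicit.
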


\begin{proof}
Let $a\in \cC_E(X,Y)=E(Y\times X)$.  Set $Z=Y\times X$.  We claim that
$a = R_{\pi_1}\circ D_a\circ I_{\pi_2}$
in $\cC_E$.  This is immediate from Proposition~\ref{pr:triple}(a).
\end{proof}

Now suppose that we have two maps in $RDI$ form, and that we wish to
compose them; that is, consider a composition of the form
\[ [R_f\circ D_a\circ I_g]\circ [R_{f'}\circ D_{a'}\circ I_{g'}].
\]
There are three rules that allow us to rewrite this in $RDI$ form
once again.  We indicate these schematically as:
\addtocounter{subsection}{1}
\begin{align}
\label{eq:RDI}
D\circ R &\leadsto  R\circ D \qquad \text{[Proposition~\ref{pr:R-D}]} \\
I\circ D &\leadsto  D\circ I \qquad
\notag
\,\text{[Proposition~\ref{pr:R-D}]} \\ \notag
I\circ R &\leadsto  R\circ I \qquad \ \text{[Proposition~\ref{pr:Gysin->Cat}(h)].} 
\end{align}
To use these in our problem, we start by forming the pullback in the
following diagram:
\[ \xymatrix{
&& P\ar[dl]_s\ar[dr]^t \\
& A\ar[dl]_f \ar[dr]^g && A'\ar[dl]_{f'}\ar[dr]^{g'} \\
Y && B && X.
}
\]
Then
\addtocounter{subsection}{1}
\begin{align}
\label{eq:RDI-comp}
R_f\circ D_a \circ I_g\circ R_{f'}\circ D_{a'}\circ I_{g'} &=
R_f\circ D_a \circ R_{s}\circ I_t \circ D_{a'}\circ I_{g'} \\ \notag
&= R_f\circ R_s\circ D_{s^*a} \circ D_{t^*a'} \circ I_t\circ I_{g'} \\
\notag
&= R_{fs}\circ D_{(s^*a)\cdot(t^*a')}\circ I_{g't}.\notag
\end{align}

The above discussion has proven Corollary~\ref{co:intro-RDI} from the
introduction.  We also note that we have proven
Theorem~\ref{th:intro-RDI} along the way:

\begin{proof}[Proof of Theorem~\ref{th:intro-RDI}]
Parts (a) and (b) are Proposition~\ref{pr:R-D}, whereas (c) is
Proposition~\ref{pr:Gysin->Cat}(h).
Part (d) is Proposition~\ref{pr:triple}(b).  
\end{proof}

\subsection{A detailed example of the Burnside functor}

Consider the category $\cC_\cA$, where $\cA$ is the Burnside functor
for $\cC$.  Given the role of $\cA$ as the universal Gysin functor,
it is useful to have a particularly good handle on how to work with
$\cC_\cA$.  Recall that a map in $\cC_\cA$ from $X$ to $Y$ is an
element of $\cA(Y\times X)$, and so is represented by a map $h\colon S\ra
Y\times X$ in $\cC$.  It is often useful to represent this data as a
span, by writing
\[ \xymatrixrowsep{1pc}\xymatrixcolsep{1pc}\xymatrix{ & S \ar[dr]^{\pi_2h}\ar[dl]_{\pi_1h} \\
Y&& X.}
\]

The following list gives a ``dictionary'' for how certain structures
are represented in $\cC_\cA$.

\begin{itemize}
\item[(1)]
$\xymatrixcolsep{1pc}\xymatrixrowsep{0.1pc}\xymatrix{ &X \ar[ddl]_f
\ar[ddr]^\id \\
 &&& = & Rf, \\ A && X}
\qquad\qquad\qquad
\xymatrixcolsep{1pc}\xymatrixrowsep{0.1pc}\xymatrix{ &X \ar[ddl]_\id
\ar[ddr]^g \\
 &&& = & Ig \\ X && B}
$ \\[0.1in]
\item[(2)]
$\xymatrixcolsep{1pc}\xymatrixrowsep{0.1pc}\xymatrix{ &X \ar[ddl]_f
\ar[ddr]^g \\ 
 &&& = & Rf\circ Ig\\
A && B}$ 
\\[0.1in]
\item[(3)]
$
\xymatrixcolsep{1pc}\xymatrixrowsep{0.1pc}\xymatrix{ &X \ar[ddl]_\id
\ar[ddr]^\id \\
 &&& = & i_X \\ X && X}
\qquad\qquad\qquad
\xymatrixcolsep{0.7pc}\xymatrixrowsep{0.1pc}\xymatrix{ &X\times X \ar[ddl]_{\pi_1}
\ar[ddr]^{\pi_2} \\
 &&& = & 1_X \\ X && X}
$
\\[0.1in]
\item[(4)]
$\xymatrixcolsep{0.6pc}\xymatrixrowsep{0.1pc}\xymatrix{ 
&X \ar[ddl]_f \ar[ddr]^g &&&&X'\ar[ddl]_{f'} \ar[ddr]^{g'} &&&&
X\times X'\ar[ddl]_>>>>>{f\times f'}\ar[ddr]^{g\times g'} \\
&&& \tens &&&& =  \\ 
A && B && A' && B' && A\times A' && B\times B'}
$
\\[0.1in]
\item[(5)] unit of $\tens$ is $S=*$ \\[0.1in]
\item[(6)] The adjunction $\Hom(X,F(Y,Z))\ra \Hom(X\tens Y,Z)$ is
\[
\xymatrixcolsep{0.6pc}\xymatrixrowsep{0.1pc}\xymatrix{ 
& W\ar[ddl]_{f\times g} \ar[ddr]^h &&&& W\ar[ddl]_g\ar[ddr]^{h\times f} \\
&&& \longleftrightarrow \\
Y\times Z && X && Z && X\times Y
}
\]
\\[0.1in]
\item[(8)] The identity $\mathbbm{1}_X\colon S\ra F(X,X)$ is
\[
\xymatrixcolsep{0.6pc}\xymatrixrowsep{1pc}\xymatrix{ 
& X \ar[dl]_\Delta \ar[dr]\\
X\times X && {}\phantom{X}{*}\phantom{X}
}
\]
\\
\item[(9)] The evaluation and coevaluation morphisms are
\[
\xymatrixcolsep{0.6pc}\xymatrixrowsep{0.1pc}\xymatrix{ 
 & X\ar[ddl]\ar[ddr]^\Delta &&&&&&& X\ar[ddl]_\Delta \ar[ddr] \\
&&& = & \ev_X, &&&&&& = & \cev_X \\
{}\phantom{X}{*}\phantom{X} && X\times X &&&&& X\times X && {}\phantom{X}{*}\phantom{X}
}
\]
\\
\item[(10)] The transposition $t_{X,Y}\colon X\tens Y\ra Y\tens X$ is
\[
\xymatrixcolsep{0.6pc}\xymatrixrowsep{1pc}\xymatrix{ 
& X\times Y \ar[dl]_t \ar[dr]^\id \\
Y\times X && X\times Y
}
\]
\item[(11)]
Given $a\colon T\ra X$, one has $
\xymatrixcolsep{1pc}\xymatrixrowsep{0.1pc}\xymatrix{ &T \ar[ddl]_a
\ar[ddr]^a \\ 
&&& = & Da. \\
X && X}$ 
\end{itemize}

\subsection{Gysin categories in the Galois setting}
\label{se:Gysin-Galois}

We now add some extra hypotheses to the category $\cC$, all of which
are satisfied in the cases of interest.  First, say that an object $X$
in $\cC$ is \dfn{atomic} if $X\neq \emptyset$ and $X$ 
is not isomorphic to a coproduct
$A\amalg B$ where both $A$ and $B$ are different from the initial
object.  We will assume that

\begin{itemize}
\item If $X$ is atomic and $Y$ and $Z$ are any objects, then the
natural map $\cC(X,Y)\amalg \cC(X,Z)\ra \cC(X,Y\amalg Z)$ is a
bijection.
\item For every atomic object $X$ in $\cC$, the set $\Aut(X)$ is
finite.
\item If $X\neq\emptyset$ then $\cC(X,\emptyset)=\emptyset$.
\end{itemize}

\noindent
If $\cC$ is finitary lextensive and satisfies the above properties, we
will say that $\cC$ is a \dfn{Galoisien} category.

\begin{example}
Let $G$ be a finite group, and let $\cC$ be the category of finite
$G$-sets.  Then $\cC$ is Galoisien, and the atomic objects are the
transitive $G$-sets.  
\end{example}

Let $Y$ be an object of $\cC$, and let $G(Y)=\Aut(Y)$.  There is
an evident map
\[ \coprod_{\sigma \in G(Y)} Y_\sigma \ra Y\times Y \]
(where $Y_\sigma$ denotes a copy of $Y$ labelled by $\sigma$), where
the map $Y_\sigma \ra Y\times Y$ is $\id\times \sigma$.  We say that
$Y$ is \dfn{Galois} if the displayed map is an isomorphism.  To
generalize this somewhat, if $p\colon X\ra Y$ is a map then let
$G(X/Y)=\{\alpha\in \Aut(X)\,|\,p\alpha=p\}$.  Say that $X\ra Y$ is
Galois if the evident map
\[ \coprod_{\sigma \in G(X/Y)} X_\sigma \ra X\times_Y X
\]
is an isomorphism.

The results in the following lemma can be proven by elementary category theory:

\begin{lemma}
\label{le:Galois-lemma}
Suppose that $X$ and $Y$ are atomic.
\begin{enumerate}[(a)]
\item If $Y$ is Galois then $\cC(X,Y)$ is either empty or else it is a
$G(Y)$-torsor.
\item If $Y$ is Galois then every endomorphism of $Y$ is an
isomorphism.
\item If $X$ and $Y$ are Galois and $f,g\colon X\ra Y$, then the
evident map
\[ \coprod_{\{\sigma\in G(X)\,|\, f=g\sigma\}} X_\sigma \lra
\text{\rm pullback}[X\llra{f} Y \llla{g} X]
\]
is an isomorphism.
\item If $X$ and $Y$ are Galois then so is every map $X\ra Y$.
\item Suppose that $Y$ and $Z$ are both Galois, and assume given $f\colon X\ra
Y$ and $g\colon Z\ra Y$.   If there exists a map $u\colon X\ra Z$ such
that $gu=f$, then the evident map
\[ \coprod_{\sigma \in G(Z/Y)} X_\sigma \ra X\times_Y Z
\]
is an isomorphism.
\item If $f\colon X\ra Y$ is a map and $Y$ is Galois, then 
$\coprod_{\sigma\in G(Y)} X_\sigma
\ra X\times Y$ given by $\id\times \sigma f\colon X_\sigma \ra X\times
Y$ is an isomorphism.

\item If $X$ and $Y$ are both Galois and $f\colon X\ra Y$, then for
every $\alpha\in \Aut(X)$ there is a unique $\alpha_f\in \Aut(Y)$ such
that $f\alpha=\alpha_f f$.  Moreover, the map $G(X)\ra G(Y)$ given by
$\alpha\ra \alpha_f$ is a group homomorphism.
\end{enumerate}
\end{lemma}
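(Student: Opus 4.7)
The common thread in all seven parts is the Galois decomposition $Y\times Y\cong \coprod_{\sigma\in G(Y)} Y_\sigma$ together with the Galoisien axioms, which ensure that a map out of an atomic object into a finite coproduct factors through a unique summand, and with the fact (built into the definition of a finitary lextensive category) that pullbacks distribute over finite coproducts. My plan is to derive (a) and (b) directly from the Galois condition, then to deduce (c), (d), (f) from (a) combined with pullback-coproduct manipulations, to reduce (g) to (a), and to save (e) for last as it combines several of these ingredients.

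For (a), let $f,g\colon X\to Y$ with $X$ atomic. The pair $(f,g)\colon X\to Y\times Y$ composes with $Y\times Y\cong \coprod_\sigma Y_\sigma$; atomicity together with the first Galoisien axiom forces this to factor through a unique summand $Y_\sigma$, and tracing the inclusion $Y_\sigma \hookrightarrow Y\times Y$ (which is $\id\times\sigma$) yields $g=\sigma\circ f$. Thus $G(Y)$ acts freely and transitively on $\cC(X,Y)$ whenever this set is nonempty. Part (b) is then immediate: any endomorphism of $Y$ is $\sigma\circ \id_Y$ for some $\sigma\in G(Y)$, hence invertible. For (g), the two maps $f,f\alpha\colon X\to Y$ differ by a unique $\alpha_f\in G(Y)$ with $f\alpha=\alpha_f f$, and the identity $f\alpha\beta = \alpha_f f\beta = \alpha_f\beta_f f$ combined with the uniqueness assertion in (a) forces $(\alpha\beta)_f=\alpha_f\beta_f$.

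For (c), I would realize $X\times_Y X$ as the pullback of $\Delta_Y\colon Y\to Y\times Y$ along $(f,g)\colon X\times X\to Y\times Y$, then substitute the decomposition $X\times X\cong \coprod_{\sigma\in G(X)} X_\sigma$ and use distributivity of pullback over coproducts. The summand indexed by $\sigma$ contributes a copy of $X$ precisely when $(f,g\sigma)$ factors through $\Delta_Y$, i.e.\ when $f=g\sigma$, producing the claimed decomposition. Part (d) is the special case $f=g$ of (c). For (f), I would identify $X\times Y \cong X\times_Y (Y\times Y)$ (using $f$ on the left and $\mathrm{pr}_1$ on the right) and pull back the Galois decomposition of $Y\times Y$ summand by summand; since $\mathrm{pr}_1$ restricted to each $Y_\sigma$ is $\id_Y$, each summand $X\times_Y Y_\sigma$ collapses to a copy of $X$, and tracking the inclusion into $X\times Y$ shows the resulting map $X_\sigma\to X\times Y$ is $\id\times \sigma f$.

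The main obstacle will be (e), because one must produce an isomorphism
\[
\coprod_{\sigma\in G(Z/Y)} X_\sigma \lra X\times_Y Z,
\]
where the $\sigma$-summand is included via $(\id_X,\sigma u)$. My plan is to verify the universal property on atomic test objects $T$ (using the Galoisien axioms to reduce to this case). Given $(a,b)\colon T\to X\times_Y Z$ with $fa=gb$, apply (a) to the two maps $ua,b\colon T\to Z$ to get a unique $\sigma\in G(Z)$ with $b=\sigma\circ ua$. The remaining task is to show $\sigma\in G(Z/Y)$, i.e.\ that $g\sigma=g$ as maps $Z\to Y$. For this I would again invoke (a), this time for the maps $g,g\sigma\colon Z\to Y$: they are related by some $\tau\in G(Y)$ with $g=\tau\circ g\sigma$, and the relation $fa=gb=g\sigma ua=\tau^{-1}gua=\tau^{-1}fa$ together with freeness of the $G(Y)$-action on $\cC(T,Y)$ forces $\tau=e$, hence $g=g\sigma$. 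The inverse assignment to the displayed map is then $(a,b)\mapsto a$ landing in the $\sigma$-summand, and uniqueness of $\sigma$ makes this well-defined.
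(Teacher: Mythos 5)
Your treatment of (a)--(d), (f), and (g) is correct and essentially the paper's argument: (a) proceeds, as in the paper, by factoring $(f,g)\colon X\ra Y\times Y$ through a unique summand of $\coprod_{G(Y)}Y$ (your freeness argument via injectivity in the first Galoisien axiom replaces the paper's argument that a map factoring through two distinct summands would factor through their pullback $\emptyset$ --- both are fine); (c) is the paper's computation, pulling $\Delta_Y$ back along $f\times g\colon X\times X\ra Y\times Y$ and distributing over $X\times X\iso\coprod_{\sigma\in G(X)}X_\sigma$; (d) and (g) are handled as in the paper, with your derivation of (g) from the torsor statement being a clean way to fill in details the paper leaves to the reader. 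Your (f) is proved directly, by rewriting $X\times Y$ as $X\times_Y(Y\times Y)$, rather than as the paper's special case of (e); this is the same kind of manipulation and is perfectly valid.

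The genuine gap is in (e). You propose to show that $\coprod_{\sigma\in G(Z/Y)}X_\sigma\ra X\times_Y Z$ is an isomorphism by checking bijectivity on maps from atomic test objects $T$, ``using the Galoisien axioms to reduce to this case.'' No such reduction is available: the Galoisien axioms only govern maps \emph{out of} atomic objects into coproducts, finiteness of their automorphism groups, and maps into $\emptyset$; they do not assert that every object is a finite coproduct of atomics, nor that the atomic objects form a strong generator, so a morphism inducing bijections on atomic points need not be an isomorphism, and your proposed inverse ``$(a,b)\mapsto a$ into the $\sigma$-summand'' is only defined on atomic points and cannot be promoted to a morphism of $\cC$ without exactly such a generation hypothesis. (Your torsor arguments for a fixed atomic $T$ are fine, though note you also apply (a) to the pair $g,g\sigma\colon Z\ra Y$, which requires $Z$ atomic --- an implicit hypothesis the paper's proof shares.) The paper's proof of (e) avoids test objects entirely: since $gu=f$, pasting of pullback squares identifies $X\times_Y Z$ with the pullback of $X\llra{u}Z\llla{\pi_1}Z\times_Y Z$; by (d) the map $g\colon Z\ra Y$ is Galois, so $Z\times_Y Z\iso\coprod_{\sigma\in G(Z/Y)}Z_\sigma$, and distributivity of pullbacks over coproducts together with $\pi_1\circ(\id\times\sigma)=\id_Z$ makes each summand pull back to a copy of $X$, yielding the decomposition directly (and making (f) a literal special case). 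Replacing your argument for (e) with this formal manipulation closes the gap.
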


\begin{proof}
For (a),
suppose that $\cC(X,Y)\neq \emptyset$ and let $f\colon X\ra Y$ be a
map.  
We need to show that the map $G(Y) \ra
\cC(X,Y)$ given by $\sigma \mapsto \sigma f$ is a bijection.
Let $g\colon X\ra Y$ be any map, and consider $f\times g\colon X\ra
Y\times Y$.  Composing with the isomorphism $\coprod_{G(Y)} Y\ra
Y\times Y$, the fact that $X$ is atomic shows that the resulting map
factors through a map $u\colon X\ra Y_\sigma$, for some $\sigma$.  One
then obtains the commutative diagram
\[ \xymatrixcolsep{3pc}\xymatrix{
X\ar[d]_u \ar[dr]^{f\times g} \\
Y_\sigma \ar[r]^-{\id\times \sigma} & Y\times Y
}
\]
which shows that $u=f$ and $g=\sigma u=\sigma f$.  So the action of
$G(Y)$ on $\cC(X,Y)$ is transitive.  

Now suppose that $\alpha,\beta\in G(Y)$ and $\alpha f=\beta f$.  Then
$f\times \alpha f\colon X\ra Y\times Y$ factors through both
$Y_\alpha$ and $Y_\beta$ (under the isomorphism $\coprod_{G(Y)} Y\ra
Y\times Y$).  Therefore it factors through the pullback $Y_\alpha \ra
\coprod_{G(Y)} Y \la Y_\beta$.  But if $\alpha \neq \beta$ then this
pullback is $\emptyset$, by our standing hypotheses that $\cC$ is
finitary lextensive.  Since
the map $f\times \alpha f$ cannot factor through $\emptyset$, this is
a contradiction; so we must have $\alpha=\beta$.

Part (b) is an immediate consequence of (a).
For (c), the pullback in question is isomorphic to the pullback of
\[ Y\llra{\Delta} Y\times Y \llla{f\times g} X\times X.
\]
Use the decomposition $X\times X\iso \coprod_{\sigma \in G(X)}
X_\sigma$ and the fact that pullbacks distribute over finite coproducts to see
that our pullback is isomorphic to
\[ \coprod_{\sigma \in G(X)} \text{\rm{pullback}}[Y\llra{\Delta}
Y\times Y \llla{f\times g\sigma} X].
\]
Next use the decomposition $Y\times Y\iso \coprod_{\alpha\in G(Y)}
Y_\alpha$, together with the fact that $\Delta\colon Y\ra Y\times Y$
factors through the summand $Y_{\id}$.  
Since $X$ is atomic, we deduce that
the pullback inside the above coproduct is either $\emptyset$ (when $f\neq
g\sigma$) or $X$ (when $f=g\sigma$).  This finishes off part (c).

Part (d) is a direct consequence of (c), applied in the case $f=g$.

For (e), the existence of $u$ implies that $X\times_Y Z$ is isomorphic to the
pullback of
\[ X\llra{u} Z \llla{\pi_1} Z\times_Y Z.
\]
Next use that $Z\times_Y Z\iso \coprod_{\sigma\in G(Z/Y)} Z_\sigma$
and use the fact that pullbacks distribute over coproducts.  

Part (f) is a special case of (e).
Finally, the proof of (g) uses the same techniques that have been demonstrated
in the preceding parts: consider $f\times f\alpha \colon X\ra Y\times Y$
and factor this through some $Y_\sigma$.  Details are left to the reader.
\end{proof}

Before proceeding, let us establish some
notation.  If $R$ is a ring and $S$ is a set, then $R\langle S\rangle$
denotes the set of all formal finite sums $\sum r_is_i$ where $r_i\in
R$ and $s_i\in S$.  This is the free left $R$-module with basis $S$.
Similarly, let $\langle S\rangle R$ be the set of all formal finite
sums $\sum s_ir_i$ with $s_i\in S$ and $r_i\in R$.  When $R$ is
commutative these are of course isomorphic $R$-modules, but the
difference in notation will be useful to us below.

When $X$ is  Galois we can now determine the ring $\cC_E(X,X)$
precisely:

\begin{prop}
\label{pr:Galois-endo}
If $X$ is Galois then the map $E(X)[\widetilde{\Aut(X)}]\ra
\cC_E(X,X)$ from Remark~\ref{re:twisted-endo} is an isomorphism of rings.
\end{prop}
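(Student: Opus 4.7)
The map in question sends $a[g] \mapsto Da \circ Rg$, and it is already shown to be a ring homomorphism in Remark~\ref{re:twisted-endo} (built on Proposition~\ref{pr:R-D}). So the plan is simply to prove bijectivity, by using the Galois structure to decompose $\cC_E(X,X) = E(X \times X)$ into a direct sum indexed by $G = \Aut(X)$.

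The first step is to apply Lemma~\ref{le:Galois-lemma}(f) (with $Y = X$, $f = \id$), or equivalently the definition of $X$ being Galois: the map $\coprod_{\sigma \in G} X_\sigma \to X \times X$, whose restriction to $X_\sigma$ is $\id \times \sigma$, is an isomorphism in $\cC$. By Lemma~\ref{le:Gysin-sum} applied inductively, pushforward along these inclusions yields an isomorphism
\[
\bigoplus_{\sigma \in G} E(X) \xrightarrow{\;\cong\;} E(X \times X), \qquad (a_\sigma)_\sigma \mapsto \sum_\sigma (\id \times \sigma)_!(a_\sigma).
\]
As an abelian group, $E(X)[\widetilde{G}]$ is also a direct sum of $|G|$ copies of $E(X)$, one per element of $G$, so both sides are already isomorphic as abelian groups; the task is to check that our specific map implements this decomposition.

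The central computation is to show $Da \circ Rg = (\id \times g^{-1})_!(a)$ for each $g \in G$ and $a \in E(X)$. Here is how I would do it: use Proposition~\ref{pr:R-D} to rewrite $Da \circ Rg = Rg \circ D(g^*a)$; then Proposition~\ref{pr:Gysin->Cat}(e)(ii) gives
\[
Rg \circ D(g^*a) = (g \times \id_X)_!\bigl(\Delta_!(g^*a)\bigr) = \bigl((g \times \id)\circ \Delta\bigr)_!(g^*a).
\]
The map $(g \times \id)\circ \Delta \colon X \to X \times X$ sends $x \mapsto (gx,x)$, and a direct check shows this equals $(\id \times g^{-1}) \circ g$ as maps $X \to X \times X$. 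Functoriality of $(\blank)_!$ and Lemma~\ref{le:iso-shriek} (whence $g_! \circ g^* = \id$) then give $Da \circ Rg = (\id \times g^{-1})_! g_!(g^*a) = (\id \times g^{-1})_!(a)$.

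Combining these, the composite $E(X)[\widetilde{G}] \to E(X \times X) \xrightarrow{\cong} \bigoplus_{\sigma \in G} E(X)$ sends $a[g]$ to $a$ placed in the summand $\sigma = g^{-1}$. This is just the reindexing isomorphism $g \leftrightarrow g^{-1}$ applied to a free $E(X)$-module basis, hence a bijection. Combined with the ring-map property from Remark~\ref{re:twisted-endo}, this completes the argument. The only subtle point I anticipate is keeping the direction of the permutation $g \mapsto g^{-1}$ straight when matching up indices; once the computation $Da\circ Rg = (\id \times g^{-1})_!(a)$ is in hand, everything else is routine unwinding of the axioms.
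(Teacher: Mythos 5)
Your proposal is correct and follows essentially the same route as the paper: both use the Galois isomorphism $\coprod_{\sigma}X_\sigma\to X\times X$ to split $E(X\times X)$ as $\bigoplus_{\sigma\in G}E(X)$ and then identify $Da\circ Rg$ with the element $a$ sitting in the summand indexed by $g^{-1}$, so that the map from the twisted group ring is a reindexed sum of identities. The only (immaterial) difference is your computation of $Da\circ Rg$ via Proposition~\ref{pr:R-D}, Proposition~\ref{pr:Gysin->Cat}(e)(ii) and Lemma~\ref{le:iso-shriek}, where the paper gets the same identity in one step from Proposition~\ref{pr:Gysin->Cat}(e)(iii) together with $I_\sigma=R_{\sigma^{-1}}$.
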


\begin{proof}
Since $X$ is Galois, the usual map $\coprod_{\sigma \in G(X)} X\ra X\times
X$ is an isomorphism.   So
\[ B\colon \bigoplus_{\sigma\in G(X)} E(X) \ra E(X\times X)
\]
is an isomorphism, where on component $\sigma$ the map $B$ equals $(\id\times
\sigma)_!\Delta_!$.   
If $a\in E(X)$ then we have a copy of $a$ in the component of the
domain indexed by $\sigma$.  The image of this class in $E(X\times X)$
is precisely 
\begin{align*}
(\id\times \sigma)_!\Delta_!(a)=D_a\circ I_{\sigma}=D_a\circ R_{\sigma^{-1}}.
\end{align*}
This implies that the map $E(X)\langle
\Aut(X)\rangle \ra \cC_E(X,X)$ given by $a.\sigma\mapsto D_aR_\sigma$
is an isomorphism of abelian groups.  We already saw in
Remark~\ref{re:twisted-endo} that it is a ring homomorphism, where we
give the domain the appropriate structure of twisted group ring.
\end{proof}

\begin{remark}
In concrete terms, Proposition~\ref{pr:Galois-endo} says that every
map in $\cC_E(X,X)$ may be uniquely written as a finite sum of terms
$D_a R_\alpha$ where $a\in E(X)$ and $\alpha\in \Aut_\cC(X)$.
Composition is done according to the rule
\[ D_a R_\alpha \circ D_b R_\beta = D_a D_{(\alpha^{-1})^*b}R_\alpha
R_\beta = D_{a\cdot (\alpha^{-1})^*b} R_{\alpha\beta}
\]
where in the first equality we have used Proposition~\ref{pr:R-D}
(together with the fact that $\alpha$ is an isomorphism).
The awkwardness of this formula stems from our
representation of elements of $\cC_E(X,X)$ in the form $D_aR_\alpha$.  As
we have remarked before, it is better to use the $RDI$ system and
represent the elements as $R_\alpha\circ D_a$.  If we do this, then
the composition law is
\[ R_\alpha D_a\circ R_\beta D_b = R_{\alpha\beta} D_{(\beta^*a\cdot b)},
\]  
which is a little simpler.  We will always use this formulation from
now on.  
\end{remark}

We next turn to the case of two objects.
Assume that $f\colon X\ra Y$ is a map in $\cC$, where both $X$ and
$Y$ are assumed to be atomic and Galois.  Our goal is to describe the full
subcategory of $\cC_E$ containing $X$ and $Y$.  If $f$ is an
isomorphism then this problem reduces to the case of one object, which
we handled above.  So let us further 
assume that $f$ is not an
isomorphism.  Note that this implies that there cannot exist a map in
$\cC$ from $Y$ to $X$: if there were such a map, then the post- and pre-composites
with $f$ would be isomorphisms by Lemma~\ref{le:Galois-lemma}(b),
and so $f$ would itself be an isomorphism.

Write
$\Aut(X)=\{\alpha_1,\ldots,\alpha_r\}$ and
$\Aut(Y)=\{\beta_1,\ldots,\beta_s\}$.  
Note that $\cC(X,Y)=\{\beta_1f,\ldots,\beta_s f\}$ by
Lemma~\ref{le:Galois-lemma}(a), and $X\times Y\iso \coprod_{\sigma\in
  \Aut(Y)} X$ by Lemma~\ref{le:Galois-lemma}(f).  Then
$\cC_E(X,Y)=E(Y\times X)\iso \oplus_{\Aut(Y)} E(X)$, and one can check
that the isomorphism is the one that represents each map in
$\cC_E(X,Y)$
as a sum of maps $R_{\beta_i}D_a$ where $a\in E(X)$.  A similar
analysis works for $\cC_E(Y,X)$, and so
the full subcategory of $\cC_E$ containing $X$ and $Y$ may be depicted
as follows:

\[ \xymatrixcolsep{8pc}\xymatrix{
X \ar@/^5ex/[r]^{\langle R_{\beta_1f},\ldots,R_{\beta_sf}\rangle E(X)}
\ar@(ul,dl)[]_{\langle R_{\alpha_1},\ldots,R_{\alpha_r}\rangle E(X)}
& Y
\ar@(ur,dr)[]^{\langle R_{\beta_1},\ldots,R_{\beta_s}\rangle E(Y)} 
\ar@/^5ex/[l]^{E(X)\langle I_{\beta_1f},\ldots,I_{\beta_s f}\rangle}
}
\]
The labels on the arrows depict the abelian group of maps in $\cC_E$; e.g., the
label on the arrow from $X$ to $Y$ depicts $\cC_E(X,Y)$.  The diagram
indicates that every map
from $X$ to $Y$ may be uniquely written as a sum of terms
$R_{\beta_if}D_{a_i}$ where $a_i\in E(X)$ (and similarly for other
choices of domain and range).  

Compositions of maps are determined via the $RDI$ rules outlined in
(\ref{eq:RDI}).  Here are some examples:

\begin{enumerate}[(1)]
\item {}[$X\ra X\ra Y$ compositions.] Here one uses
\[ 
R_{\beta_i f}D_a\circ R_{\alpha_j}D_b= R_{\beta_i f}
R_{\alpha_j}D_{\alpha_j^*(a)}D_b = R_{\beta_i
  f\alpha_j}D_{(\alpha_j^*a)b}.
\]
\item {}[$Y\ra Y\ra X$ compositions.]  Here one uses
that $\beta_i$ is invertible and so we have $R_{\beta_i}=I_{\beta_i}^{-1}$:
\begin{align*}
D_aI_{\beta_j f}\circ R_{\beta_i}D_u= D_aI_{\beta_j f}\circ I_{\beta_i}^{-1}
D_u = D_a I_{\beta_j f}I_{\beta_i^{-1}}D_u & = D_a
I_{\beta_i^{-1}\beta_j f} D_u\\
& =D_{(a\cdot (\beta_i^{-1}\beta_jf)^*(u))}
I_{\beta_i^{-1}\beta_j f}.
\end{align*}
\item {}[$Y\ra X\ra Y$ compositions.]  In this case
we consider
\begin{align*}
 R_{\beta_j f}D_a\circ D_bI_{\beta_i f}= R_{\beta_j} \circ R_f \circ
D_{ab} \circ I_f\circ I_{\beta_i} & = R_{\beta_j}\circ D_{f_!(ab)} \circ
I_{\beta_i} \\
& = R_{\beta_j}\circ I_{\beta_i} \circ D_{(\beta_i^{-1})^*(f_!(ab))}\\
& = R_{\beta_j}\circ R_{\beta_i^{-1}} \circ
D_{(\beta_i^{-1})^*(f_!(ab))}\\
&= R_{\beta_j\beta_i^{-1}}D_{(\beta_i^{-1})^*(f_!(ab))}.
\end{align*}
In the second equality we have used Proposition~\ref{pr:triple}(b)
 and in the third equality we
have used Proposition~\ref{pr:R-D} (which applies because $\beta_i$ is invertible).
\item {}[$X\ra Y\ra X$ compositions.]  Let $T=\{\sigma\in G(X)\,|\,
\beta_jf=\beta_if\sigma\}$.  Observe that by Lemma~\ref{le:Galois-lemma}(c) 
since $X$ and $Y$ are Galois
we have a pullback
diagram
\[ \xymatrix{
\coprod_{\sigma\in T}
X_\sigma \ar[r]\ar[d] & X \ar[d]^{\beta_i f} \\
X\ar[r]^{\beta_j f} & Y
}
\]
where the vertical map $X_\sigma \ra X$ is the identity and the
horizontal map $X_\sigma\ra X$ is $\sigma$.  We then write
\begin{align*}
 D_{a'}I_{\beta_j f}\circ R_{\beta_i f}D_a=\sum_\sigma D_{a'}
I_{\sigma} D_a & = \sum_\sigma I_{\sigma} D_{(\sigma^{-1})^*(a')}D_a\\
& = 
\sum_\sigma R_{\sigma^{-1}} D_{((\sigma^{-1})^*(a')\cdot a )}.
\end{align*}
The second equality is by Proposition~\ref{pr:R-D}, using that
$\sigma$ is an isomorphism.

\item {}[Remaining cases.]  The cases that have not been treated so far are all very
similar to (1) or (2).
\end{enumerate}

As the reader can see from the above analysis, a complete description
of the maps between Galois objects is relatively simple.  But the
description of compositions becomes unwieldy, although in practice it
is a purely mechanical process to work out any given composition.  
%%%%%%%%%%%%%%%%%%%%%%%%%%%%%%%%%%%%%%%%%%%%%%%%%%%%%%%%%%%%%%%%%%%%%%%%%

%\newpage

\section{Grothendieck-Witt categories over a field}
\label{se:GW-field}

Let $k$ be a field of characteristic not equal to $2$, 
and consider the Grothendieck-Witt category
$\GWC(k)$ over $k$.  

Let $\fEt/k$ be the full subcategory of $\Aff/\Spec k$ consisting of
the objects $\Spec E$ where $k\ra E$ is finite \'etale.  Let
$\cA_{\fEt}$ be the Burnside Gysin functor, and let $\chi\colon
\cA_{\fEt}\ra \GW$ be the natural transformation from Proposition~
\ref{pr:Gysin-universal}.  

The following result is essentially \cite[Appendix B, Theorem 3.1]{Dr}.  We
include the proof for completeness.  For the proof, recall 
that if $a\in E$ then $\langle
a\rangle$ denotes the quadratic space $(E,b_a)$ where $b_a(x,y)=axy$,
and $\langle a,b\rangle=\langle a\rangle\oplus \langle b\rangle$.

\begin{prop}
\label{pr:A->GW}
For any finite separable field extension $k\ra E$, the map $\chi\colon
\cA_{\fEt}(E)\ra
\GW(E)$ is surjective.
\end{prop}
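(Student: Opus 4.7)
\medskip

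The plan is to show that the image subgroup $I \subseteq \GW(E)$ of $\chi$ is all of $\GW(E)$.  Since $\chi$ is a map of Gysin functors, its restriction $\cA_{\fEt}(E) \to \GW(E)$ is a ring homomorphism, so $I$ is actually a \emph{subring}, not just an additive subgroup; this extra structure will be essential.  Because $\chara(k) \neq 2$ every nondegenerate quadratic space over $E$ diagonalizes, so $\GW(E)$ is generated as an abelian group by the rank-one forms $\qf{a}$ with $a \in E^\times$.  It therefore suffices to produce each $\qf{a}$ inside $I$.

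By construction $\chi[f] = f_!(1)$ for $f\colon \Spec S \to \Spec E$ a morphism in $\fEt/k$, and under the Gysin functor $\GW$ the pushforward $f_!$ is the Scharlau transfer; so $\chi[f]$ is the quadratic space $S$ over $E$ equipped with $(x,y) \mapsto \tr_{S/E}(xy)$.  Taking $S = E(\sqrt{a})$ for a non-square $a \in E^\times$ and diagonalizing the trace form in the basis $\{1,\sqrt{a}\}$ gives $\qf{2, 2a} \in I$.  Taking $f = \id$ gives $\qf{1} \in I$.

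The main obstacle is that the transfers from quadratic extensions naturally produce rank-two forms, whereas the generators we must hit are rank one; the subring structure of $I$ is what bridges this gap.  The pivotal intermediate step is to show $\qf{2} \in I$.  If $2$ is already a square in $E$, then $\qf{2} = \qf{1} \in I$ immediately.  Otherwise $E(\sqrt{2})/E$ is a nontrivial quadratic extension, and the preceding formula with $a = 2$ yields $\qf{2, 4} = \qf{2, 1} \in I$, from which $\qf{2} = \qf{2,1} - \qf{1} \in I$.

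With $\qf{2} \in I$ in hand, we finish using the multiplication in $\GW(E)$, which obeys $\qf{c}\qf{d} = \qf{cd}$.  For a non-square $a \in E^\times$, compute $\qf{2,2a} \cdot \qf{2} = \qf{4, 4a} = \qf{1, a} \in I$ since $4$ is a square; subtracting $\qf{1}$ gives $\qf{a} \in I$.  For $a$ a square, $\qf{a} = \qf{1} \in I$ trivially.  Hence every rank-one generator of $\GW(E)$ lies in $I$, proving that $\chi$ is surjective.
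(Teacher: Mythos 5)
Your proposal is correct and follows essentially the same route as the paper: compute the transfer form of the quadratic extension $E(\sqrt{a})$ to get $\qf{2,2a}$ in the image, handle $\qf{2}$ first by the case $a=2$, and then use that the image is a subring (since $\chi$ is a ring map) to extract $\qf{a}$. The only difference is a trivial reordering of the final manipulation (you multiply $\qf{2,2a}$ by $\qf{2}$ and subtract $\qf{1}$, while the paper subtracts $\qf{2}$ and then multiplies), which changes nothing of substance.
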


\begin{proof}
Recall that $\GW(E)$ is generated as an abelian group by the classes
$\langle a\rangle$ for $a\in E^*$.  We will show that each of these
classes is in the image of $\chi$. 

If $a$ is not a square in $E$ then consider the field extension
$E_a=E[x]/(x^2-a)$.  Then $E_a$ is a separable field extension of $E$,
and $\chi(E_a)$ is simply $E_a$ (regarded as an
$E$-vector space) equipped with
the trace form.  An easy computation shows this is
isomorphic to $\langle 2,2a\rangle=\langle 2\rangle + \langle
2a\rangle$.    So we have $\langle 2\rangle + \langle
2a\rangle=\chi(E_a)$.

We claim that $\langle 2\rangle \in \im \chi$.  If $2$ is a square in
$E$ then this is clear, since $\langle 2\rangle=\langle 1\rangle$.  
If $2$ is not a square in $E$ then we may apply the above analysis
with $a$ replaced by $2$ to find that $\langle 2\rangle + \langle
4\rangle \in \im \chi$.  Since $\langle 4\rangle=\langle 1\rangle \in
\im \chi$,
 we again have $\langle 2\rangle \in \im \chi$.

At this point we know that $\langle 2\rangle +\langle 2a\rangle\in \im
\chi$ and $\langle 2\rangle \in \im \chi$, and so $\langle 2a\rangle
\in \im \chi$.  But then $\langle 4a\rangle=\langle 2\rangle \cdot
\langle 2a\rangle \in \im \chi$.  Since $\langle 4a\rangle =\langle
a\rangle$, we are done.
\end{proof}

\begin{example}
The map $\chi$ is usually not an isomorphism.  To see this in one
example, let $k=\F_p$ where $p$ is odd.  Then $\Aet(k)$ is a free
abelian group on a countably-infinite set of generators, whereas
$\GW(k)\iso \Z\oplus \Z/2$.  In general, it would be interesting to
have a set of generators for the kernel of $\Aet(k)\ra \GW(k)$
together with some kind of geometric source for them.  See
Example~\ref{ex:euler-ffield} below.
\end{example}

\begin{remark}
If $f\colon R\ra S$ is a sheerly separable map of rings, we have the induced maps
$f_*\colon \GW(R)\ra \GW(S)$ and $f^!\colon \GW(S)\ra \GW(R)$ from
Section~\ref{se:GWcat}.   However, for most purposes it
 is more convenient to use the
geometric setting of affine schemes: there we would write $f^*\colon
\GW(\Spec R)\ra \GW(\Spec S)$ and $f_!\colon \GW(\Spec S)\ra \GW(\Spec
R)$.  The disadvantage here is that it becomes tedious to write
$\Spec$ repeatedly.  We will tend to mix the two notations and write
$f^*\colon \GW(R)\ra \GW(S)$ and $f_!\colon \GW(S)\ra \GW(R)$.  In
effect, this is basically  just dropping the ``$\Spec$'' and letting it
be understood.  In practice there is never any confusion here.
\end{remark}

Our goal is to be able to analyze pieces of the categories $\GWC(k)$ for some
explicit choices of $k$.  Galois theory gives an equivalence of
categories between sheerly separable extensions of $k$ and 
continuous $\Gal(k^{sep}/k)$-sets, and this is a useful tool to
exploit.

Fix a finite-dimensional Galois extension $L/k$, and set
$G=\Gal(L/k)$.  Say that a separable $k$-algebra $A$ is
\mdfn{$L$-constructible} if it is isomorphic to a product $\prod_i A_i$
where each $A_i$ is an algebraic field extension of $k$ that admits an
embedding into $L$.  
For each finite $G$-set $S$, let $\cF(S,L)$ be the set of $G$-maps from $S$ to $L$, with ring structure
given by pointwise addition and multiplication.  Clearly 
$\cF(G/H,L)\iso L^H$ and $\cF(S\amalg T, L)\iso \cF(S,L)\times
\cF(T,L)$,
 hence each $\cF(S,L)$ is
$L$-constructible.  In the opposite direction, given a sheerly separable
$k$-algebra $A$ the set of $k$-algebra maps $\alg{k}(A,L)$ inherits an
action of $G$.  
Galois theory says that we have an equivalence of categories
\[ \fGset \adjoint \fEt_k^{L-con}
\]
where the upper arrow is $S\mapsto \cF(S,L)$ and the lower arrow is $\Spec A\mapsto
\alg{k}(A,L)$.

The Grothendieck-Witt functor on $\fEt_k$ restricts, via the above
Galois equivalence, to a Gysin functor on finite $G$-sets.  Let us
write
\[ \GW_L(S)=\GW(\cF(S,L))
\]
for this restricted Gysin functor.  Clearly the correspondence
category
$\fGset_{(\GW_L)}$ 
is the full subcategory of $(\fEt_k)_{\GW}$ whose
objects are the $L$-constructible $k$-algebras.  

The universality of the Burnside functor gives a natural
transformation $\cA_G \ra \GW_L$, and therefore a functor between
correspondence categories
\[ \fGset_{(\cA_G)} \ra \fGset_{(\GW_L)}.
\]
Putting everything together, we have constructed a functor from the
Burnside category of $G$ to the Grothendieck-Witt category over $k$.  

We now look at several examples:

\begin{example}
The category $\GWC(\R)$ has two objects: $\Spec \R$ and $\Spec \C$.  Let
$\pi\colon \Spec \C \ra \Spec \R$ be the unique map, and $\sigma\colon
\Spec \C\ra \Spec \C$ be the nontrivial automorphism.  
Since $\GW(\C)=\Z$ and $\GW(\R)=\Z\langle  \langle 1\rangle, \langle
-1\rangle \rangle$, the category $\GWC(\R)$ is readily computed to be
as shown in the diagram below.  One only needs check that $I_\pi\circ
R_\pi=1+\sigma$ and $R_\pi\circ I_\pi=\langle 1\rangle + \langle
-1\rangle$.  

Similarly, the Burnside category for $\Z/2$ has two objects: $*$ and $\Z/2$.
We write $\pi\colon \Z/2\ra *$ and $\sigma\colon \Z/2\ra \Z/2$ for the
evident maps.  Then $\cA_{\Z/2}(\Z/2)=\Z$ and $\cA_{\Z/2}(*)=\Z\langle
[*],[\Z/2]\rangle$.  Here one computes that $I_\pi\circ
R_\pi=1+\sigma$ and $R_\pi\circ I_\pi=[\Z/2]$.  

\vspace{0.2in}

\[ \xymatrix{
\Z/2 \ar@(ul,ur)^{\Z\langle 1,\sigma\rangle} \ar@/^3ex/[d]^{\Z\langle R\pi\rangle} &&&& \Spec \C
\ar@/^3ex/[d]^{\Z\langle R\pi\rangle} \ar@(ul,ur)^{\Z\langle 1,\sigma\rangle} \\
{*} \ar@/^3ex/[u]^{\Z\langle I\pi\rangle} \ar@(dl,dr)_{\Z\langle
  [*],[\Z/2]\rangle} 
&&&& \Spec \R. \ar@/^3ex/[u]^{\Z\langle I\pi\rangle}
\ar@(dl,dr)_{\Z\langle\, \langle 1\rangle,\langle -1\rangle\,\rangle}
}
\]
The map from the Burnside category to the Grothendieck-Witt category
has the evident behavior (in particular, it sends $[\Z/2]$ to
$\langle 1\rangle +\langle -1\rangle$), and by inspection is an isomorphism.
\end{example}

Before considering our next example we need to recall some facts about
finite fields.  If $F$ is a finite field of odd characteristic 
then $F^\times$ is cyclic of even order and
so $(F^\times)/(F^\times)^2=\Z/2$.  Thus when we partition $F^\times$
into the squares and the non-squares, any two non-squares are
equivalent: if $a$ and $b$ are non-squares then $a=\lambda^2 b$ for
some $\lambda$.  
A little work shows when $\chara(F)\neq 2$ that $\GW(F)$ is generated by
$\langle 1\rangle$ and $\langle g\rangle$, where $g\in F^\times$ is
any choice of non-square.  Moreover, $2\langle g\rangle=2\langle
1\rangle$ and $\GW(F)\iso \Z\oplus \Z/2$ with corresponding generators
$\langle 1\rangle$ and $\langle g\rangle-\langle 1\rangle$.  See
\cite{S} or \cite[Appendix A]{D1} for details.  
It is useful to write $\alpha=\qf{g}-\qf{1}$.

Note that the calculation of $\GW(F)$ gives a classification of all
non-degenerate quadratic spaces over $F$: in each dimension there are
exactly two, namely $n\langle 1\rangle$ and $(n-1)\langle 1\rangle +
\langle g\rangle=n\qf{1}+\alpha$.  The discriminant of the form,
regarded as an element of $F^\times/(F^\times)^2$, distinguishes the
two isomorphism types.

The following lemma calculates the behavior of the Grothendieck-Witt
group under a quadratic extension.

\begin{lemma}
\label{le:GW1}
  Let $q=p^e$ where $p$ is an odd prime.  Fix a non-square $g\in
  \F_q$, and fix a non-square $h\in \F_{q^2}$.  If $j\colon \F_q\inc
  \F_{q^2}$ is a fixed embedding then the pullback and pushforward
  maps for $\GW(\blank)$ are given by the formulas
\[ j^*(\langle 1\rangle)=j^*(\qf{g})=\qf{1}, \qquad 
j_!(\qf{1})=\qf{1}+\qf{g}, \quad j_!(\qf{h})=2\qf{1}.
\]
Every automorphism of $\F_q$ induces the identity on $\GW(\F_q)$ (both
via pullback and pushforward).  
\end{lemma}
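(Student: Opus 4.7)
The proof of each equation reduces to a Gram-matrix calculation followed by a discriminant check, using the classification of nondegenerate quadratic forms over $\F_q$ recalled just above the statement: in each dimension there are exactly two, distinguished by the class of the discriminant in $\F_q^\times/(\F_q^\times)^2$.

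For the pullbacks, $j^*\qf 1=\qf 1$ is trivial. For $j^*\qf g$, since $g$ is a non-square the polynomial $x^2-g$ is irreducible over $\F_q$; the unique quadratic extension $\F_{q^2}$ therefore contains a square root of $g$, so $g$ becomes a square in $\F_{q^2}$ and $j^*\qf g=\qf 1$. For the transfers I would pick the basis $\{1,\sqrt g\}$ of $\F_{q^2}$ over $\F_q$; the nontrivial Galois element is Frobenius and sends $\sqrt g\mapsto -\sqrt g$, so $\tr(a+b\sqrt g)=2a$. Writing a general $h\in \F_{q^2}$ as $a+b\sqrt g$, a direct calculation shows the Gram matrix of $(x,y)\mapsto \tr(hxy)$ in this basis is
\[
\begin{pmatrix} 2a & 2bg \\ 2bg & 2ag\end{pmatrix},
\]
with determinant $4g(a^2-b^2g)=4g\cdot N_{\F_{q^2}/\F_q}(h)$. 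Specialising to $h=1$ gives discriminant class $g$, a non-square, so $j_!\qf 1\cong \qf 1+\qf g$.

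For $j_!\qf h$ with $h$ a non-square, the same formula reduces the claim to showing that $N(h)$ is a non-square in $\F_q$: then $g\cdot N(h)$ is a square, the discriminant class is trivial, and the classification forces $j_!\qf h\cong 2\qf 1$. This norm calculation is the one substantive step; I would handle it by choosing a generator $\zeta$ of the cyclic group $\F_{q^2}^\times$, observing that $\zeta^{q+1}$ has order $q-1$ and is therefore a generator of $\F_q^\times$ (hence a non-square, since $|\F_q^\times|=q-1$ is even), and computing $N(\zeta)=\zeta\cdot\zeta^q=\zeta^{q+1}$. Writing $h=\zeta^m$ with $m$ odd, $N(h)=(\zeta^{q+1})^m$ is then an odd power of a non-square generator of $\F_q^\times$, hence a non-square. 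Finally, any $\sigma\in\Aut(\F_q)$ has the form $x\mapsto x^{p^i}$; pullback of a form over $\F_q$ acts by applying $\sigma$ to its Gram matrix, so $\sigma^*\qf a=\qf{\sigma(a)}=\qf a$ because $\sigma(a)/a=a^{p^i-1}$ is an even power of $a$ (as $p^i$ is odd), hence a square. Lemma~\ref{le:iso-shriek} then gives $\sigma_!=(\sigma^*)^{-1}=\id$.
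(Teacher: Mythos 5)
Your proof is correct, and for most of its length it follows the paper's own computation step for step: the same identification $\F_{q^2}=\F_q[x]/(x^2-g)$ (your $\sqrt g$ is the paper's $x$), the same Gram matrix for $(x,y)\mapsto \tr(hxy)$ with determinant $4g(a^2-b^2g)$, and the same appeal to the rank-plus-discriminant classification of forms over $\F_q$. The one genuine divergence is the substantive final step, showing that the transfer of a non-square class $\qf{h}$ has square discriminant. The paper does not prove that the norm of a non-square is a non-square; instead it invokes the fact that every element of a finite field is a sum of two squares (cited from Scharlau), writes $g^{-1}=b^2+r^2$, computes $j_!(\qf{1+bx})=2\qf{1}$ directly, and then deduces \emph{a posteriori} that $1+bx$ must be a non-square because its pushforward differs from $j_!(\qf{1})$. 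You instead prove the cleaner general statement that $N_{\F_{q^2}/\F_q}(h)=h^{q+1}$ is a non-square in $\F_q$ whenever $h$ is a non-square in $\F_{q^2}$, via a generator $\zeta$ of the cyclic group $\F_{q^2}^\times$ and the observation that $\zeta^{q+1}$ generates $\F_q^\times$; combined with the identity $\mathrm{disc}=4g\cdot N(h)$ this handles all $h$ at once. Your route is somewhat more self-contained (no sum-of-two-squares input) and identifies exactly which rank-one classes push forward to which rank-two classes, while the paper's route trades that for an explicit witness and a short indirect argument. The automorphism statement is handled the same way in both proofs (automorphisms preserve squares, and $\sigma_!=(\sigma^*)^{-1}$ by Lemma~\ref{le:iso-shriek}); your remark that $\sigma(a)/a=a^{p^i-1}$ is itself a square is a mild strengthening of the paper's observation.
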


\begin{proof}
First note that if $\alpha$ is an automorphism of $\F_q$ then $\alpha$
  preserves the property of being a square or non-square; consequently,
  $\alpha^*$ is the identity since $\alpha^*(\qf{g})=\qf{g}$.  Since $\alpha_!$
  is the inverse of $\alpha^*$ (Lemma~\ref{le:iso-shriek}), this is also the
  identity.  So we have verified the last sentence of the lemma.

  Observe that $\F_{q^2}$ may be identified with the extension
  $\F_q[x]/(x^2-g)$, and we may assume that $j$ is the evident
  inclusion of $\F_q$ (using the previous paragraph).  Since $g=x^2$
  in $\F_{q^2}$ we have $j^*(\qf{g})=\qf{1}$.

To compute $j_!(\qf{1})$ we must analyze the trace form on
$\F_{q^2}$.  This is represented by the $2\times 2$ matrix
\[ 
\begin{bmatrix}
\tr(1) & \tr(x) \\
\tr(x) & \tr(x^2)
\end{bmatrix} =
\begin{bmatrix}
2 & 0 \\
0 & 2g
\end{bmatrix}.
\]
The discriminant is $4g$, which is equivalent to $g$ modulo squares.
So $j_!(\qf{1})=\qf{1}+\qf{g}$.  

The above work readily generalizes to compute $j_!(\qf{a+bx})$ for any
$a,b\in \F_q$.  This form is represented by the matrix
\[ 
\begin{bmatrix}
\tr(a+bx) & \tr(ax+bx^2) \\
\tr(ax+bx^2) & \tr(ax^2+bx^3)
\end{bmatrix} =
\begin{bmatrix}
2a & 2bg \\
2bg & 2ag
\end{bmatrix}.
\]
The discriminant is $4a^2g-4b^2g^2=4g(a^2-b^2g)$, and so 
$j_!(\qf{a+bx})=\qf{1}+\qf{g(a^2-b^2g)}$.  

In a finite field every element can be written as a sum of two squares
\cite[Lemma 2.3.7]{S}, so we can write $g^{-1}=b^2+r^2$ for some
$b,r\in \F_q$.  Neither $b$ nor $r$ is zero, since $g$ is not a
square.  Then
\[
j_!(\qf{1+bx})=\qf{1}+\qf{g(1-b^2g)}=\qf{1}+\qf{g(r^2g)}=\qf{1}+\qf{1}=2\qf{1}.
\]
Hence $\qf{1+bx}\neq \qf{1}$ (since their images under $j_!$ are
different), and so $1+bx$ is a non-square class;
i.e. $\qf{1+bx}=\qf{h}$ in $\GW(\F_{q^2})$.  So we have in fact proven
that $j_!(\qf{h})=2\qf{1}$.
\end{proof}

\begin{prop}
\label{pr:GW-ffield}
Let $q$ be a power of an odd prime, and consider a field extension
$j\colon \F_q\inc \F_{q^e}$.  Let $g$ and $g'$ be non-squares in
$\F_q$ and $\F_{q^e}$, respectively.
Then the induced maps $j^*$ and $j_!$
are given by
\[ j^*(\qf{1})=\qf{1},\qquad
\j^*(\qf{g})=\begin{cases}
\qf{g'} & \text{if $e$ is odd,}\\
\qf{1} & \text{if $e$ is even,}
\end{cases}
\]
\[ j_!(\qf{1})=
\begin{cases}
e\qf{1} & \text{$e$ odd,}\\
(e-1)\qf{1}+\qf{g}  & \text{$e$  even,}
\end{cases}
\qquad
j_!(\qf{g})=
\begin{cases}
(e-1)\qf{1} + \qf{g} & \text{$e$ odd,}\\
e\qf{1} & \text{$e$ even.}
\end{cases}
\]
These formulas can also be written as:
\[ j^*(\qf{1})=\qf{1}, \qquad j^*(\alpha)=\begin{cases} 
\alpha & \text{$e$ odd},\\
0 & \text{$e$ even},
\end{cases}
\]
\[
j_!(\qf{1})=\begin{cases}
e\qf{1} & \text{$e$ odd} \\
e\qf{1}+\alpha & \text{$e$ even}
\end{cases}, \qquad \qquad
j_!(\alpha)=\alpha.
\]
\end{prop}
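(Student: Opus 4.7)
The plan is to compute $j^*$ and $j_!$ separately, exploiting the fact that a nondegenerate quadratic form over a finite field of odd characteristic is determined up to isometry by its rank and by its discriminant class in $\F_q^\times/(\F_q^\times)^2$.

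First I would handle $j^*$. The identity $j^*(\qf{1})=\qf{1}$ is immediate, so the content is in deciding when a non-square $g\in\F_q^\times$ becomes a square in $\F_{q^e}^\times$. Since $\F_{q^e}^\times$ is cyclic with some generator $\zeta$, the subgroup $\F_q^\times$ is generated by $\eta:=\zeta^{(q^e-1)/(q-1)}$, and the integer $(q^e-1)/(q-1)=1+q+\cdots+q^{e-1}$ has the same parity as $e$ because $q$ is odd. Writing the non-square $g$ as $\eta^{2k+1}=\zeta^{(2k+1)(q^e-1)/(q-1)}$, this is a square in $\F_{q^e}$ iff the exponent is even iff $e$ is even, which settles the $j^*$ formulas.

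Next, for $j_!(\qf{1})$, which is by definition the trace form of $\F_{q^e}/\F_q$: the rank is $e$, so I only need the discriminant class in $\F_q^\times/(\F_q^\times)^2$. I would pick a primitive element $\alpha\in\F_{q^e}$ with Galois conjugates $\alpha_i=\phi^i(\alpha)$ and use the Vandermonde formula to write the discriminant as $V^2$ (up to a square) for $V=\prod_{0\le i<j\le e-1}(\alpha_i-\alpha_j)\in\F_{q^e}^\times$. Frobenius $\phi$ shifts indices cyclically; pairs $(i,j)$ with $j<e-1$ shift to valid ordered pairs, while the $e-1$ pairs $(i,e-1)$ with $i=0,\ldots,e-2$ each shift to $(i+1,0)$ and require reordering, contributing a sign of $-1$ each. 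Hence $\phi(V)=(-1)^{e-1}V$. For $e$ odd this forces $V\in\F_q$, so $V^2$ is a square in $\F_q$; for $e$ even, $V\notin\F_q$ (else $V=-V=0$, impossible in odd characteristic), but $V^2\in\F_q$ cannot be a square (else $V=\pm w\in\F_q$). Combined with rank $e$, this yields the stated formulas for $j_!(\qf{1})$.

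Lastly, for $j_!$ applied to the non-square class in $\GW(\F_{q^e})$, I will use the observation that for any $c\in\F_{q^e}^\times$, the Gram matrix of $j_!(\qf{c})$ in a chosen $\F_q$-basis equals $M_c^T A$, where $M_c$ is the matrix of multiplication by $c$ viewed as an $\F_q$-linear endomorphism and $A$ is the Gram matrix of $j_!(\qf{1})$; hence the two discriminants differ by $\det M_c=N_{\F_{q^e}/\F_q}(c)$. A parity analysis analogous to the first step shows that for a non-square $g'=\zeta^{2k+1}$ in $\F_{q^e}$, $N(g')=g'^{(q^e-1)/(q-1)}=\eta^{2k+1}$ is a non-square in $\F_q$, so the two discriminants lie in opposite square classes. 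Combined with rank $e$ this gives the remaining formulas, and the alternative presentation in terms of $\alpha=\qf{g}-\qf{1}$ follows by subtraction. The main obstacle is the sign analysis for the Vandermonde product: the crucial factor $(-1)^{e-1}$ must be extracted from careful bookkeeping of the index wraparound modulo $e$, though once this sign is in hand the rest is routine.
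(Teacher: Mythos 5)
Your proof is correct. For $j^*$ and for $j_!(\qf{1})$ you follow essentially the same path as the paper: the square/non-square criterion in the cyclic group $\F_{q^e}^\times$ (the paper phrases it as "$\F_{q^e}$ contains $\F_{q^2}$ iff $e$ is even", which is the same fact), and for the trace form the identification of its discriminant with the Vandermonde square $V^2$ followed by the observation that the Frobenius, acting as an $e$-cycle on the conjugates, multiplies $V$ by $(-1)^{e-1}$ — your bookkeeping of the wraparound pairs is exactly the sign computation the paper leaves implicit, and the deduction "$V\in\F_q$ iff $e$ odd, hence $V^2$ is a square in $\F_q$ iff $e$ odd" is the paper's argument. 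Where you genuinely diverge is the pushforward of the non-square class (which you correctly read as $\qf{g'}$, $g'$ a non-square of $\F_{q^e}$, consistent with the formulation $j_!(\alpha)=\alpha$): you use the identity $\mathrm{Gram}(j_!\qf{c})=M_c^{T}A$, so that the discriminants of $j_!\qf{c}$ and $j_!\qf{1}$ differ by $N_{\F_{q^e}/\F_q}(c)$ modulo squares, together with the fact that the norm of a non-square is a non-square; combined with the rank-and-discriminant classification this settles both parities uniformly. The paper instead splits cases: for $e$ odd it applies the projection formula, $j_!(\qf{g'})=j_!(j^*\qf{g}\cdot 1)=\qf{g}\cdot j_!(\qf{1})$, and for $e$ even it factors through the intermediate field $\F_{q^{e/2}}$ and invokes Lemma~\ref{le:GW1} to see that $j_!(\qf{g'})$ is divisible by $2$, forcing $e\qf{1}$. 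Your route is more self-contained and parity-uniform (it in fact computes $j_!\qf{c}$ for arbitrary $c$ in terms of $N(c)$), at the cost of an explicit matrix identity; the paper's route is shorter given the structural tools (projection formula, Lemma~\ref{le:GW1}) it has already established. Both are complete proofs.
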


\begin{proof}
The statement about $j^*$ is immediate: the extension $\F_{q^e}$
contains a square root of $g$ if and only if it contains $\F_{q^2}$,
which happens precisely when $e$ is even.

To compute $j_!(\qf{1})$ it suffices to analyze the discriminant of
the trace form on $\F_{q^e}$.  A classical computation says this
coincides with the discriminant of the minimal polynomial of any
primitive element for the extension $\F_{q^e}/\F_q$.  
If $r_1,\ldots,r_e$ are the roots of this minimal polynomial, then
this discriminant is $\Delta=Q^2$ where
\[ Q=\prod_{i<j} (r_i-r_j).
\]
If the roots are indexed appropriately then the Galois group of
$\F_{q^e}/\F_q$ acts by cyclic permutation.  It follows that $Q$ is
invariant under the Galois action if and only if $e$ is odd. 
So we see that $\Delta$ is a square in $\F_{q}$ if and only if $e$ is
odd.  The former condition is equivalent to
$j_!(\qf{1})=e\qf{1}$.

Finally, we analyze $j_!(\qf{g})$.  When $e$ is odd this is easy, as
we can write
\[ j_!(\qf{g})=j_!(j^*(\qf{g})\cdot 1)=\qf{g}\cdot
j_!(\qf{1})=\qf{g}\cdot e\qf{1}=e\qf{g}=(e-1)\qf{1}+\qf{g}.
\]
When $e$ is even the pushforward $\GW(\F_{q^{e}})\ra
\GW(\F_{q^{e/2}})$ sends $\langle g\rangle$ to $2\langle 1\rangle$
by Lemma~\ref{le:GW1}.  It follows that $j_!(\langle g\rangle)$ is a
multiple of $2$, and of course it also has rank $e$.  The only such
element of $\GW(\F_q)$ is $e\langle 1\rangle$.  
\end{proof}

\begin{example}[The Euler characteristic of a finite field extension]
\label{ex:euler-ffield}
Our goal is to explicitly compute the map $\chi\colon \Aet(\F_q)\ra
\GW(\F_q)\iso \Z\oplus \Z/2$.  
Given a finite field extension $j\colon \F_q\inc \F_{q^e}$, 
the Euler characteristic is 
another name for $j_!(1)$.
Using Proposition~\ref{pr:GW-ffield},
this is equal to $\chi(\F_{q^e})=e\qf{1}+\epsilon_e \alpha \in \GW(\F_q)$ where
\[ \epsilon_e=\begin{cases}
0 & \text{if $e$ is odd,}\\
1 & \text{if $e$ is even.}
\end{cases}
\]
It is an amusing exercise to use the above computation to check
the
multiplicativity formula
\[ \chi(\F_{q^e}\tens_{\F_q} \F_{q^f})=\chi(\F_{q^e})\cdot \chi(\F_{q^f}),
\]
which is the analog in the present context of the topological formula 
$\chi(X\times Y)=\chi(X)\times
\chi(Y)$.  

We can use the above computation to give generators for the kernel of
$\chi\colon \Aet(\F_q)\ra \GW(\F_q)$.  If we set $E_n=[\F_{q^n}]$
then by inspection a complete set of relations is
\[ E_{n+3}=E_{n+2}+E_{n+1}-E_{n}\ \  (n\geq 1), \quad 2E_2=E_1+E_3, \quad
  E_3=3E_1.
\]
It would be interesting to find an explicit  geometric explanation for these
relations.  For example, one might try to produce a degree $4$ \'etale map
$f\colon X\ra
Y$ of $\F_q$-schemes 
where $Y$ is $\A^1$-connected and where one fiber of $f$ is $\Spec
\F_{q^2}\amalg \Spec \F_{q^2}$ and another fiber is $\Spec \F_q\amalg
\Spec \F_{q^3}$.  
\end{example}

\begin{example}
We next explore a small piece of $\GWC(\F_p)$, where $p$ is odd.  
Specifically, consider
the full subcategory whose objects are $\Spec \F_{q}$ for $q=p^{2^i}$
and $0\leq
i\leq 3$.  Set $G=\Gal(\F_{p^8}/\F_p)=\Z/8$.  
 Let $g_{2^i}$ denote some specific
choice of non-square element in $\F_{p^{2^i}}$, and write
$\alpha_{2^i}=\langle g_{2^i}\rangle -\langle 1\rangle$.    
Also write $J_{2^i}=\GW(\F_{p^{2^i}})$; this is isomorphic to
$\Z\oplus \Z/2$ with corresponding generators $1$ and $\alpha_{2^i}$,
subject to the multiplicative relation
$\alpha_{2^i}^2=-2\alpha_{2^i}=0$.  Finally, let $\sigma$ always
denote the Frobenius $x\mapsto x^p$ and fix specific embeddings
$j_{2^i}\colon\F_{q^{2^i}}\inc \F_{q^{2^{i+1}}}$ and their induced
maps $\pi_{2^i}\colon \Spec \F_{q^{2^{i+1}}}\ra \Spec \F_{q^i}$.  

The following diagrams show the Burnside category for $\Z/8$ as well
as the relevant piece of $\GWC(\F_p)$.  Recall that if $R$ is a ring
and $S$ is a set then we write $R\langle S \rangle$ and $\langle
S\rangle R$ for the sets of
finite sums $\sum r_is_i$ and $\sum s_ir_i$ where $r_i\in R$, $s_i\in
S$. 
We let
$A_{2^i}=\cA_{\Z/8}(\Z/2^i)$, the Grothendieck ring of $\Z/8$-sets
over $\Z/2^i$.  So $A_{2^i}=\Z\langle
[\Z/2^i],[\Z/2^{i+1}],\ldots,[\Z/8]\rangle$.  In the $\Z/8$-set
context we let $\sigma$ always denote the map $x\mapsto x+1$.

\usetikzlibrary{matrix}
\begin{tikzpicture}
\matrix (m) [matrix of math nodes, row sep=2em, column sep=16em,
minimum width=2em]
{
\Z/8 & \F_{p^8} \\
\Z/4 & \F_{p^4} \\
\Z/2 & \F_{p^2} \\
{\phantom{A}\!\!\!\!\!*} & \F_{p} \\};
\path[-stealth]
(m-1-1) edge  [bend left] node[auto] 
{$\scriptstyle{\langle R\pi_4,\ldots,R\pi_4\sigma^3\rangle A_8}$}
(m-2-1)
(m-2-1) edge [bend left] node[auto]
{$\scriptstyle{
A_8\langle I\pi_4,\ldots,I(\sigma^3\pi_4)\rangle }$} (m-1-1)
(m-1-1) edge  [loop left] node[auto] 
{$
\scriptstyle{\langle 1,\sigma,\ldots,\sigma^7\rangle A_8}
$}
(m-1-1)
%%%%%%%%%%%%%%%%%%%
(m-2-1) edge  [bend left] node[auto] 
{$\scriptstyle{\langle R\pi_2,R\pi_2\sigma\rangle A_4
}$}
(m-3-1)
(m-3-1) edge [bend left] node[auto]
{$\scriptstyle{A_4\langle I\pi_2,I(\sigma\pi_2)\rangle
}$} (m-2-1)
(m-2-1) edge  [loop left] node[auto] 
{$
\scriptstyle{\langle 1,\sigma,\sigma^2,\sigma^3\rangle A_4
}
$}
(m-2-1)
%%%%%%%%%%%%%%%%%%%%%%%
(m-3-1) edge  [bend left] node[auto] 
{$\scriptstyle{\langle R\pi_1\rangle A_2
}$}
(m-4-1)
(m-4-1) edge [bend left] node[auto]
{$\scriptstyle{A_2\langle I\pi_1\rangle
}$} (m-3-1)
(m-3-1) edge  [loop left] node[auto] 
{$
\scriptstyle{\langle 1,\sigma\rangle A_2
}
$}
(m-3-1)
%%%%%%%%%%%%%%%%%%%%%%%%%%%%%%%%%
(m-1-2) edge  [bend left] node[auto] 
{$\scriptstyle{\langle R\pi_4,\ldots,R\pi_4\sigma^3\rangle J_8}$}
(m-2-2)
(m-2-2) edge [bend left] node[auto]
{$\scriptstyle{
J_8\langle I\pi_4,\ldots,I(\sigma^3\pi_4)\rangle }$} (m-1-2)
(m-1-2) edge  [loop left] node[auto] 
{$
\scriptstyle{\langle 1,\sigma,\ldots,\sigma^7\rangle J_8}
$}
(m-1-2)
%%%%%%%%%%%%%%%%%%%
(m-2-2) edge  [bend left] node[auto] 
{$\scriptstyle{\langle R\pi_2,R\pi_2\sigma\rangle J_4
}$}
(m-3-2)
(m-3-2) edge [bend left] node[auto]
{$\scriptstyle{J_4\langle I\pi_2,I(\sigma\pi_2)\rangle
}$} (m-2-2)
(m-2-2) edge  [loop left] node[auto] 
{$
\scriptstyle{\langle 1,\sigma,\sigma^2,\sigma^3\rangle J_4
}
$}
(m-2-2)
%%%%%%%%%%%%%%%%%%%%%%%
(m-3-2) edge  [bend left] node[auto] 
{$\scriptstyle{\langle R\pi_1\rangle J_2
}$}
(m-4-2)
(m-4-2) edge [bend left] node[auto]
{$\scriptstyle{J_2\langle I\pi_1\rangle
}$} (m-3-2)
(m-3-2) edge  [loop left] node[auto] 
{$
\scriptstyle{\langle 1,\sigma\rangle J_2
}
$}
(m-3-2);
\end{tikzpicture}
\end{example}

Notice that we have written $\sigma^i$ instead of $R\sigma^i$.  Also, 
note that
$\sigma$ acts trivially on each $J_n$ by
Lemma~\ref{le:GW1} and so the
endomorphism ring of $\F_{p^{n}}$ is the group ring $J_{n}[\Z/n]$.  
The analogous remark holds in the Burnside category.  Finally, note
that while the two categories clearly have very similar forms, the map
between them is not an isomorphism because $A_i\not\iso J_i$.  

Below we list the main relations in $\GWC(\F_p)$.  Recall that
$\alpha_n\in J_n$ is the unique element of order $2$.  We simplify
$D_a$ to just $a$, for $a\in J_n$.  

\begin{align*}
& R\pi_n\circ I\pi_n = \qf{2}+\alpha_n \in J_n\qquad \qquad  & I\pi_n\circ
R\pi_n=1+\sigma^n \\
& \alpha_n\circ R\pi_n=0 \qquad\qquad & I\pi_n\circ \alpha_n=0  \\
& R\pi_n \circ \alpha_{n+1}\circ I\pi_n=\alpha_{n}
\end{align*}
We leave the reader to derive these, as they are simple consequences
of using the $RDI$ rules together with the computations in
Proposition~\ref{pr:GW-ffield}.
Coupled with the obvious relations that come from the category of
fields, e.g. $R\pi_n \circ \sigma^n=R\pi_n$, the above relations allow
one to work out all compositions in $\GWC(\F_p)$.  

\medskip

\begin{example}
We describe one last example, this time concerning non-Galois
extensions.  Most of the
details will be left to the reader.  Write
$E_2=\Q(\sqrt[3]{2})$, $E_\mu=\Q(\mu_3)$ (the cyclotomic field), and
$E_{2,\mu}=\Q(\sqrt[3]{2},\mu_2)$.  Note that $[E_2:\Q]=3$,
$[E_3:\Q=2]$, and $[E_{2,\mu}:\Q=6]$.  The extensions $E_\mu/\Q$ and
$E_{2,\mu}/E_\mu$ are Galois, but $E_2/\Q$ is not.  Let $\pi_i$, $i\in
\{0,1,2,3\}$, be the maps of schemes induced by the evident inclusions
of fields:  
\[ \xymatrix{
\Spec E_2 \ar[d]_{\pi_0} & \Spec E_{2,\mu} \ar[l]_{\pi_1}\ar[d]^{\pi_3} \\
\Spec \Q & \Spec E_\mu.\ar[l]_{\pi_2}
}
\]
Finally, write $\GW_\mu=\GW(E_\mu)$, and so forth.  

Computing in the Grothendieck-Witt category $\GWC(\Q)$, maps between
$E_\mu$ and $\Q$, or between $E_{2,\mu}$ and $E_\mu$, are handled
exactly as the general case discussed at the end of Section~\ref{se:struc}.
For maps from $E_2$ to $E_\mu$, as an abelian group this is
$\GW_{2,\mu}$ since $E_2\tens_\Q E_\mu=E_{2,\mu}$.  A little thought
shows that the maps are all of the form $R\pi_3\circ Da_{2,\mu}\circ
I\pi_1$, where $a_{2,\mu}\in \GW_{2,\mu}$. 

To compute maps from $E_2$ to itself, we start with $E_2\tens_\Q
E_2\iso E_2\times E_{2,\mu}$.   As an abelian group we then have
$\GWC(\Q)(E_2,E_2)=\GW_2 \oplus \GW_{2,\mu}$.  The two summands
correpond to elements $Da_2$ for $a_2\in \GW_2$ and $R\pi_1\circ
Da_{2,\mu}\circ I\pi_1$ where $a_{2,\mu}\in \GW_{2,\mu}$.  The ring
structure is determined by the formulas
\begin{align*}
   Da_2\circ Db_2&=D(a_2b_2),\\
   Da_2\circ (R\pi_1 \circ D{a_2,\mu}\circ I\pi_1) &= R\pi_1\circ
  D(\pi_1^*(a_2)\cdot a_{2,\mu})\circ I\pi_1\\
   (R\pi_1 \circ Da_{2,\mu}\circ I\pi_1)\circ Da_2 &=
R\pi_1 \circ D(a_{2,\mu}\cdot \pi_1^*(a_2))\circ I\pi_1 \\
 (R\pi_1 \circ Da_{2,\mu} \circ I\pi_1)\circ
(R\pi_1 \circ Db_{2,\mu} \circ I\pi_1)& =
[R\pi_1\circ
D(a_{2,\mu}b_{2,\mu})\circ I\pi_1] +\\
&
\qquad [R\pi_1\circ
D(\sigma^*(a_{2,\mu})b_{2,\mu})\circ I\pi_1]. 
\end{align*}
These equations all follow from the rules in
Theorem~\ref{th:intro-RDI}.

To get a sense of the above computation, let us generalize things just
a bit.  Let $f\colon R\ra S$ be a homomorphism of commutative rings,
and let $\sigma \colon S\ra S$ be an automorphism such that
$\sigma^2=\id$ and $\sigma f=f$.  Define a product on $R\times S$ by
\[ (r,s)\cdot (r',s')=(rr',(f^*r)s'+s(f^*r')+ss'+\sigma(s)s').
\]
Check by brute force that this makes the abelian group $R\times S$
into a ring.  
Let $\alpha$ be the unique $E_2$-linear automorphism of $E_{2,\mu}$
that has  order $2$.
Applying the above construction to $\pi_1^*\colon \GW(E_2)\ra
\GW(E_{2,\mu})$, where $\sigma=\alpha^*$, yields the endomorphism ring
of $E_2$ in the Grothendieck-Witt category $\GWC(\Q)$.
\end{example}
%%%%%%%%%%%%%%%%%%%%%%%%%%%%%%%%%%%%%%%%%%%%%%%%%%%%%%%%%%%%%%%%%%%%%%%%%

%\newpage

\appendix

\section{Symmetric monoidal categories and duality}
\label{se:dual}

In this section we review some elements from the theory of closed, symmetric
monoidal categories.  Then we recall the notion of a dualizable object,
as well as some standard properties.  

\subsection{Basic conventions}
Let $(\cC,\tens,S,F(\blank,\blank))$ be a closed symmetric monoidal
category.  This means $\tens$ is the monoidal structure, $S$ is the
unit, and $X,Y\mapsto F(X,Y)$ is the cotensor.  

In this setting there are evident evaluation
maps
\[ F(A,B)\tens A\ra B 
\]
defined as the adjoint to the identity on $F(A,B)$.  Likewise, there
are certain canonical maps
\[ F(X,S)\tens Y \ra F(X,Y) \qquad\text{and}\qquad F(A,B)\tens
F(X,Y)\ra F(A\tens X,B\tens Y)
\]
defined to be the adjoints of evident compositions involving symmetry
isomorphisms and
evaluations.  In general, we will use $\psi$ to denote any such
canonical map that arises in a general closed symmetric monoidal
category.  It should always be clear from context exactly what map we mean.

There is one special case where it is useful to have a distinguished
name, rather than just the generic ``$\psi$''.
For any object $X$ in a closed symmetric monoidal category, set
$X^*=F(X,S)$.  Then we let
$\ev_X\colon X^*\tens X\ra S$ be the adjoint of the identity map
$X^*\ra F(X,S)$.

\subsection{Dualizable objects}

The theory of dualizable objects goes back to Dold and Puppe
\cite{DP}, 
but in modern
times has been used extensively by May and his collaborators (see
\cite{LMS} and \cite{Ma1}, for example).

\begin{defn}
\label{de:dual}
An object $X$ in a symmetric monoidal category 
is called \dfn{dualizable} if there is another object
$Y$ together with maps
\[ \eta\colon S\ra X\tens Y, \qquad \epsilon\colon Y\tens X\ra S
\]
such that the composite
\[ 
\xymatrixcolsep{3.3pc}\xymatrix{
X\ar@{=}[r] & S\tens X \ar[r]^-{\eta\tens \id_X} & X\tens Y\tens X
\ar[r]^-{\id\tens \epsilon} & X\tens S
\ar@{=}[r] & X
}
\]
is $\id_X$ and the composite
\[ 
\xymatrixcolsep{3.3pc}\xymatrix{
Y\ar@{=}[r] & Y\tens S \ar[r]^-{\id_Y\tens \eta} & Y\tens X\tens Y
\ar[r]^-{\epsilon \tens \id_Y } & S\tens Y
\ar@{=}[r] & Y
}
\]
is $\id_Y$.
We say that $Y$ is a \dfn{dual} for $X$, although it is
more precise to say that the dual is $(Y,\epsilon,\eta)$ since all
three pieces of structure are needed.
\end{defn}

\begin{remark}
If $Y$ is a dual for $X$, then there can be several choices for
$\epsilon$ and $\eta$ that serve as structure maps.  If one fixes $Y$
and $\epsilon$, however, then there is only one corresponding choice
for $\eta$; similarly, if one fixes $Y$ and $\eta$ then there is only
one choice for $\epsilon$.  This follows by the same argument that
shows that a functor can have at most one left (or right) adjoint.  
\end{remark}

The following result can be pulled out of the proof of \cite[Theorem III.1.6]{LMS}:

\begin{prop}
In a closed symmetric monoidal category
suppose that $X$ is dualizable with dual $(Y,\epsilon,\eta)$.  Then
the map $\tilde{\epsilon}\colon Y\ra X^*$, adjoint to $\epsilon$, is
an isomorphism.  Consequently,
$X^*$ is also a dual for $X$, with structure maps $\ev_X\colon
X^*\tens X\ra S$ and the composite
\[  S \llra{\eta} X\tens Y \llra{\id\tens \tilde{\epsilon}} X\tens
X^*. 
\]
\end{prop}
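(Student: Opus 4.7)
The plan is to construct an explicit two-sided inverse $\psi\colon X^*\to Y$ to $\tilde\epsilon$, then deduce the consequence by transporting structure. The natural candidate uses both duality data available: let
\[ \psi = (\ev_X \tens \id_Y) \circ (\id_{X^*} \tens \eta)\colon X^* = X^*\tens S \lra X^*\tens X \tens Y \lra S\tens Y = Y. \]

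To check $\psi \circ \tilde\epsilon = \id_Y$, I would expand $Y \to X^* \to X^*\tens X\tens Y \to Y$ by bifunctoriality of $\tens$ and apply the defining property $\ev_X \circ (\tilde\epsilon \tens \id_X) = \epsilon$ of $\tilde\epsilon$. This collapses the composite to
\[ Y = Y\tens S \xrightarrow{\id\tens \eta} Y\tens X\tens Y \xrightarrow{\epsilon\tens \id} S\tens Y = Y, \]
which is $\id_Y$ by one triangle identity for $(Y,\epsilon,\eta)$.

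For the other direction $\tilde\epsilon\circ\psi = \id_{X^*}$, I would use that maps into $X^*=F(X,S)$ are determined by their adjoints $X^*\tens X\to S$, so it suffices to show the adjoint of $\tilde\epsilon\circ\psi$ equals $\ev_X$. That adjoint unpacks to $\epsilon\circ(\psi\tens\id_X)$; after expanding $\psi$, the key move is to commute $\ev_X$ past $\epsilon$ since they act on disjoint tensor factors (pure bifunctoriality). This regroups the composite so that the other triangle identity $(\id_X\tens\epsilon)\circ(\eta\tens\id_X) = \id_X$ applies, leaving $\ev_X$. The main obstacle is bookkeeping in this second calculation: one must carefully track the tensor factor on which each morphism acts and choose which triangle identity to invoke at which step.

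For the consequence, once $\tilde\epsilon$ is known to be an isomorphism, I would transport the dual structure from $Y$ to $X^*$ along $\tilde\epsilon$. The transported evaluation $\epsilon\circ(\tilde\epsilon^{-1}\tens\id_X)$ is exactly $\ev_X$ by the defining property of $\tilde\epsilon$, and the transported coevaluation is $(\id_X\tens\tilde\epsilon)\circ\eta$. The triangle identities for the new triple follow at once by tensoring the old triangle identities with $\tilde\epsilon$ or $\tilde\epsilon^{-1}$ in the appropriate factor.
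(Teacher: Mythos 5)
Your proof is correct, but it runs along a different track than the paper's. The paper establishes that $\tilde{\epsilon}$ is an isomorphism by a representability argument: the duality axioms show that the map $\cC(W,Y)\ra \cC(W\tens X,S)=\cC(W,X^*)$, $f\mapsto \epsilon\circ(f\tens\id_X)$, is a bijection for every $W$, this bijection is identified with postcomposition by $\tilde{\epsilon}$, and the Yoneda Lemma finishes the job; the compatibility $\ev_X\circ(\tilde{\epsilon}\tens\id_X)=\epsilon$ and the verification of the new structure maps are then left as routine. You instead exhibit the explicit two-sided inverse $\psi=(\ev_X\tens\id_Y)\circ(\id_{X^*}\tens\eta)$ and check both composites by hand, using one triangle identity together with $\epsilon=\ev_X\circ(\tilde{\epsilon}\tens\id_X)$ for $\psi\circ\tilde{\epsilon}=\id_Y$, and the adjunction (maps into $F(X,S)$ are determined by their adjoints) together with the other triangle identity for $\tilde{\epsilon}\circ\psi=\id_{X^*}$; both verifications go through as you describe. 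Your $\psi$ is exactly what the paper's bijection produces at $W=X^*$ applied to $\ev_X$, so the two arguments have the same underlying content, but yours buys an explicit formula for $\tilde{\epsilon}^{-1}$ and makes the ``routine'' steps concrete (including a clean transport-of-structure argument for the second half of the statement), whereas the paper's Yoneda route is shorter and avoids the tensor-factor bookkeeping you rightly flag as the main hazard.
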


\begin{proof}
The duality axioms imply that the composite
\[ \cC(W,Y) \ra \cC(W\tens X,Y\tens X) \ra \cC(W\tens X,S)=\cC(W,X^*)
\]
is a bijection, for all objects $W$.  One readily checks that this
composite is induced by post-composition with the map
$\tilde{\epsilon}$ from the statement of the proposition.  
The Yoneda Lemma then yields that $\tilde{\epsilon}$
is an isomorphism.  Finally, one must check that 
\[ Y\tens X \llra{\tilde{\epsilon}\tens \id} X^*\tens X \llra{\ev_X}
S\]
equals $\epsilon$, but this is routine.  
\end{proof}

If $X$ is dualizable and $\ev_X\colon X^*\tens X\ra S$ and
$\cev_X\colon S\ra X\tens X^*$ satisfy the conditions of
Definition~\ref{de:dual} then we call $\cev_X$ the \dfn{coevaluation
  map} for $X$ (it is uniquely determined, of course).  The following
two results are standard:

\begin{prop}
In a closed symmetric monoidal category, an object $X$
is dualizable if and only if 
there exists a map $c$ that makes the following diagram commute:
\[ \xymatrix{
S\ar[d]_{\id_X}\ar[r]^-{c} & X\tens X^* \ar[d]^t \\
F(X,X) & X^*\tens X.\ar[l]^-\psi
}
\]
If $c$ exists, it is unique; and moreover, it is
precisely the coevaluation map for $X$.  
\end{prop}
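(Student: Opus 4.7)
The plan is to translate the diagram's commutativity into a triangle identity from Definition~\ref{de:dual}, handle both directions of the equivalence, and then deduce uniqueness.

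First I would unpack the diagram. Under the adjunction $\cC(S, F(X,X)) \iso \cC(X, X)$, the arrow $\id_X\colon S \ra F(X,X)$ corresponds to the identity map $X \ra X$. The canonical map $\psi\colon X^* \tens X \ra F(X,X)$ is by definition the adjoint of a composite $X^* \tens X \tens X \ra X$ built from $\ev_X$ and the symmetry on the two $X$-factors. Chasing naturality of the adjunction, $\psi \circ t \circ c\colon S \ra F(X,X)$ corresponds to the composite
\[ X \iso S \tens X \llra{c \tens \id_X} X \tens X^* \tens X \llra{\id_X \tens \ev_X} X \tens S \iso X.
\]
So the diagram commutes if and only if this composite equals $\id_X$, which is precisely the first triangle identity of Definition~\ref{de:dual} with $(Y, \eta, \epsilon) = (X^*, c, \ev_X)$.

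For the forward direction, if $X$ is dualizable then the preceding proposition gives $(X^*, \ev_X, \cev_X)$ as a dual, and setting $c := \cev_X$ makes the triangle identity hold. For the reverse direction, given $c$ making the diagram commute, I would verify that $(X^*, c, \ev_X)$ satisfies Definition~\ref{de:dual}. The first triangle identity is the content of the diagram; the second, $(\ev_X \tens \id_{X^*}) \circ (\id_{X^*} \tens c) = \id_{X^*}$, is derived from the first as follows. Via the adjunction bijection $\cC(X^*, X^*) \iso \cC(X^* \tens X, S)$ given by pairing with $\ev_X$, both sides of the desired equality correspond to maps $X^* \tens X \ra S$, and the claim reduces to verifying
\[ \ev_X \circ (\ev_X \tens \id_{X^*} \tens \id_X) \circ (\id_{X^*} \tens c \tens \id_X) = \ev_X.
\]
Using the symmetry of $\tens$ to reorganize the four-fold tensor and then applying the first triangle identity to collapse the $X$-strand reduces the left-hand side to $\ev_X$.

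The main obstacle will be the coherence bookkeeping in the derivation of the second triangle identity from the first; this step genuinely uses the symmetry of $\tens$ (in a merely monoidal category the two triangles are not equivalent), and I expect the graphical calculus to provide the most transparent argument. Finally, uniqueness of $c$ follows immediately from the remark after Definition~\ref{de:dual}: once $(X^*, c, \ev_X)$ is known to satisfy the definition, that remark's observation that a dual pair is determined by $(Y, \epsilon)$ up to its $\eta$ forces $c = \cev_X$.
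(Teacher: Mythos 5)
Your argument is correct. Note, though, that the paper does not actually prove this proposition: it simply cites \cite[Theorem III.1.6]{LMS} for the equivalence and deduces uniqueness of $c$ from the fact (from \cite[Proposition III.1.3]{LMS}) that $\psi\colon X^*\tens X\ra F(X,X)$ is an isomorphism once $X$ is dualizable, so that $c$ is forced by $c=t^{-1}\psi^{-1}(\text{adjoint of }\id_X)$. What you supply is essentially the LMS-style argument written out: unpacking the square via the adjunction into the first triangle identity for $(X^*,c,\ev_X)$, and then deriving the second triangle identity by applying the injective pairing $f\mapsto \ev_X\circ(f\tens\id_X)$ on $\cC(X^*,X^*)$ and collapsing with the first identity. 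Your uniqueness argument is different from the paper's and equally valid, and arguably more economical: once the reverse direction shows any such $c$ completes $(X^*,\ev_X)$ to a duality datum, the remark after Definition~\ref{de:dual} (uniqueness of $\eta$ given $(Y,\epsilon)$) pins down $c=\cev_X$ without needing to know that $\psi$ is invertible. One small imprecision: the key reduction in your second-triangle step does not really hinge on the symmetry of $\tens$; it only needs bifunctoriality of $\tens$ and unit coherence, namely the identity $\ev_X\circ(\ev_X\tens\id_{X^*}\tens\id_X)=\ev_X\circ(\id_{X^*}\tens\id_X\tens\ev_X)$ on the four-fold tensor (both equal $\ev_X\tens\ev_X$ followed by $S\tens S\iso S$), after which functoriality gives $\ev_X\circ\bigl(\id_{X^*}\tens[(\id_X\tens\ev_X)(c\tens\id_X)]\bigr)=\ev_X$. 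Symmetry enters only through the definition of $\psi$ and the one-sidedness conventions, not through this manipulation; this does not affect the correctness of your proof.
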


\begin{proof}
See \cite[Theorem III.1.6]{LMS}.  The uniqueness of $c$ follows from
\cite[Proposition III.1.3]{LMS}, which shows that the horizontal map
$\psi$ is an isomorphism.
\end{proof}

\begin{prop} 
\label{pr:dualizable}
If $X$ and $Y$ are dualizable objects in a closed symmetric
monoidal category then the following are true:
\begin{enumerate}[(a)]
\item $X\tens Y$ and $X^*$ are dualizable;
\item $\psi\colon X\ra X^{**}$ is an isomorphism;
\item $\psi\colon X^*\tens Y^*\ra (X\tens Y)^*$ is an isomorphism.
\item $\cev_X\colon S\ra X\tens X^*$ is the composite
\[ \xymatrix{S\ar@{=}[r] & S^* \ar[r]^-{\ev_X^*} & (X^*\tens X)^* & X^{**}\tens
X^*\ar[l]^-\psi_-\iso
& X\tens X^* \ar[l]_-{\psi\tens \id}^-\iso
}
\]
\end{enumerate}
\end{prop}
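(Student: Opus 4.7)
The plan is to use the preceding proposition (which says any dual $(Y,\epsilon,\eta)$ of $X$ is canonically isomorphic to $X^*$ via the adjoint of $\epsilon$) as the main lever, handling (a), (b), (c), (d) in order.

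For (a), I would first exhibit $Y^*\tens X^*$ as a dual of $X\tens Y$, with evaluation
\[
(Y^*\tens X^*)\tens (X\tens Y) \to Y^*\tens (X^*\tens X)\tens Y \xrightarrow{1\tens \ev_X\tens 1} Y^*\tens Y \xrightarrow{\ev_Y} S
\]
and coevaluation built analogously from $\cev_X$ and $\cev_Y$ by assembling $S \to X\tens X^* \to X\tens(Y\tens Y^*)\tens X^* \to (X\tens Y)\tens (Y^*\tens X^*)$. The two triangle identities follow by a mechanical diagram chase from those for $X$ and $Y$ individually. For the dualizability of $X^*$, observe that Definition~\ref{de:dual} is symmetric between $X$ and its dual $Y$: from the data $(X^*,\ev_X,\cev_X)$ witnessing $X$ as dualizable, one reads off data $(X,\, t\circ \cev_X,\, \ev_X\circ t)$ that witnesses $X^*$ as dualizable with dual $X$.

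For (b), the preceding paragraph has exhibited $X$ as a dual of $X^*$, so the preceding proposition produces a canonical isomorphism $X \to (X^*)^* = X^{**}$. The remaining point is to check that this canonical map agrees with the generic map $\psi$ in the statement: both are characterized as the adjoint of $\ev_X \circ t \colon X\tens X^* \to S$ under the closed structure, so they coincide. Part (c) is analogous: (a) gives $Y^*\tens X^*$ as a dual of $X\tens Y$, the preceding proposition gives a canonical isomorphism to $(X\tens Y)^*$, and precomposition with the symmetry $X^*\tens Y^* \cong Y^*\tens X^*$ yields an isomorphism that must be identified with $\psi$ via a routine adjoint computation.

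For (d), I would adjoin both sides under the duality $(X^*,\ev_X,\cev_X)$, transforming the proposed identity into the assertion that a composite built out of $\ev_X^*$, the symmetry $t$, and the applications of $\psi$ reduces to the identity on $X\tens X^*$. The triangle identities for $X$ together with (b), which identifies $\psi \colon X \to X^{**}$ with a duality-induced isomorphism, collapse this composite. The main obstacle throughout is the bookkeeping required to identify the ``canonical'' maps produced by the preceding proposition with the generic maps labelled $\psi$; all of these verifications are most efficiently carried out using the graphical calculus for closed symmetric monoidal categories alluded to in Section~\ref{se:cat->Gysin}.
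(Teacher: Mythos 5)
Your proposal is correct, but it takes a somewhat different route from the paper. For (b) and (c) the paper simply cites \cite[Proposition III.1.3]{LMS}, whereas you derive them from the preceding proposition (the uniqueness-of-duals statement that $\tilde{\epsilon}$ is an isomorphism): having observed in (a) that $X$ serves as a dual of $X^*$ and that $Y^*\tens X^*$ serves as a dual of $X\tens Y$, you get canonical isomorphisms $X\ra X^{**}$ and $X^*\tens Y^*\ra (X\tens Y)^*$ and then identify them with the generic maps $\psi$ by computing adjoints (e.g.\ both candidates for (b) are adjoint to $\ev_X\circ t$); this makes the appendix self-contained where the paper leans on the literature. For (d) the paper verifies directly that $\ev_X$ and the displayed composite satisfy both conditions of Definition~\ref{de:dual}, drawing the two explicit diagrams and disposing of their commutativity by the coherence device of checking in finite-dimensional vector spaces (via \cite{HHP}); you instead exploit uniqueness by passing to mates under the duality, reducing the claimed equality of two maps $S\ra X\tens X^*$ to showing that a single composite collapses to an identity via the triangle identities and (b), with the graphical calculus doing the bookkeeping. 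The two arguments for (d) are of comparable length and rigor—both ultimately rest on a routine coherence verification—so the trade-off is mainly stylistic: your version avoids external citations, the paper's version exhibits the diagrams explicitly. One small slip worth fixing: in your data $(X,\ t\circ\cev_X,\ \ev_X\circ t)$ witnessing $X^*$ as dualizable, the entries are listed in the order opposite to the convention $(Y,\epsilon,\eta)$ of Definition~\ref{de:dual} (the map $t\circ\cev_X\colon S\ra X^*\tens X$ is the unit $\eta$, and $\ev_X\circ t\colon X\tens X^*\ra S$ is the counit $\epsilon$); the underlying data is the right one, so this does not affect the argument.
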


\begin{proof}
Part (a) is elementary, while
parts (b) and (c) are from \cite[Proposition III.1.3]{LMS}.  For
part (d), perhaps the easiest method is to check that $\ev_X$ and the
given composite satisfy the properties of Definition~\ref{de:dual}.
To this end, consider the following diagram:
\[
\xymatrixcolsep{2.6pc}\xymatrix{
S^*\tens X \ar[r]^-{\ev_X^*\tens 1_X}\ar[dr]_\psi 
& (X^*\tens X)^*\tens X\ar[d]_\psi  & (X^{**}\tens X^*)\tens X
\ar[l]_-{\psi\tens 1}^-\iso &
(X\tens X^*)\tens X\ar[l]_-{\psi\tens 1\tens 1}^-\iso\ar[d]^{1\tens \ev_X} \\
& X^{**} && X\tens S\ar[ll]^\psi
}
\]
The vertical map labelled $\psi$ is the adjoint to the composite
\[\xymatrixcolsep{3pc}\xymatrix{
 (X^*\tens X)^*\tens X \tens X^* \ar[r]^{1\tens t}& (X^*\tens X)^*\tens
X^*\tens X \ar[r]^-{\ev_{X^*\tens X}} & S.
}
\]

We are required to show that the ``across-the-top, then down''
composition from $S^*\tens X$ to $X\tens S$ is the identity (after
canonical identifications of the domain and codomain with $X$).
But the triangle and the rectangle commute in any closed symmetric
monoidal category, by an easy verification (it suffices to check
commutativity in the category of finite-dimensional vector spaces over
a field, cf. \cite{HHP}). 
Since $\psi\colon X\ra X^{**}$ is an isomorphism by (b), this
completes the verification.  

The second condition from Definition~\ref{de:dual} is checked in a
similar manner.    The relevant diagram is a little easier:
\[ \xymatrixcolsep{2.2pc}\xymatrix{
X^*\tens S^* \ar[r]^-{1\tens \ev^*} \ar@/_3ex/[drrr]_{\id}& X^*\tens (X^*\tens X)^*
\ar[drr]_{\psi}
& X^*\tens (X^{**}\tens
X^*)\ar[l]_{1_{X^*}\tens \psi} & X^* \tens (X\tens X^*)\ar[l]_-{1\tens
\psi\tens 1} \ar[d]_{\ev_X\tens 1_{X^*}}\\
&&& S\tens X^*
}
\]  
The diagonal map labelled $\psi$ is the adjoint of the composite
\[\xymatrixcolsep{2.6pc}\xymatrix{
 X^*\tens (X^*\tens X)^* \tens X \ar[r]^-{t\tens 1}
& (X^*\tens X)^*\tens X^*\tens X \ar[r]^-{\ev_{X^*\tens X}} & S.
}
\]
The ``quadrilateral'' and ``triangle'' in the diagram again commute
in any closed symmetric monoidal category, and this completes the verification.
\end{proof}

%\newpage

\section{Leftover proofs}
\label{se:leftover}

\begin{proof}[Proof of Proposition~\ref{pr:Gysin->Cat}]
We include details
because several steps are a bit hard to remember, and this is the kind
of thing one wants to be able to just look up when needed.

For part (a), here is the check that $i_a$ is a right identity.  If
$x\in \cC_E(a,b)=E(b\times a)$ then 
\begin{align*}
x\circ i_a & = 
(\pi_{13})_!\Bigl (\pi_{12}^*x \cdot
\pi_{23}^*(\Delta^a_!(1))\Bigr ) \\
&=
(\pi_{13})_!\Bigl (\pi_{12}^*x \cdot
(\id_b\times \Delta^a)_!(1))\Bigr ) \qquad \ \qquad \qquad \ (\text{push-pull})\\
&= (\pi_{13})_! \Bigl ( (\id_b\times \Delta^a)_! \bigl ( (\id_b\times
\Delta^a)^*\pi_{12}^* x \cdot 1\bigr ) \Bigr ) \qquad\
(\text{projection formula}) \\
&= x.
\end{align*}
The last step used that $\pi_{13}\circ (\id_b\times
\Delta^a)=\id_{b\times a}$ and $\pi_{12}\circ (\id_b\times
\Delta^a)=\id_{b\times a}$.  
The verification that $i_a$ is a left identity is similar.

Write $\pi^{cba}_{ca}$ for the evident projection map $c\times b\times
a\ra c\times a$.  Let $x\in \cC_E(a,b)$, $y\in \cC_E(b,c)$, and $z\in
\cC_E(c,d)$.  
The proof of associativity proceeds by analyzing the
element
\[ \Omega= 
\bigl(\pi^{dcba}_{da}\bigr)_! \Bigl(
(\pi^{dcba}_{dc})^*(z)\cdot (\pi^{dcba}_{cb})^*(y)\cdot (\pi^{dcba}_{ba})^*(x)
\Bigr )
\]
in two different ways.  The first proceeds as follows:
\begin{align*}
\Omega &=
\bigl(\pi^{dca}_{da}\bigr)_! \bigl( \pi^{dcba}_{dca}\bigr)_! \Bigl[
(\pi^{dcba}_{dca})^*(\pi^{dca}_{dc})^*(z)\cdot 
(\pi^{dcba}_{cb})^*(y)\cdot (\pi^{dcba}_{ba})^*(x)
\Bigr ]   \\
&=
\bigl(\pi^{dca}_{da}\bigr)_!  \Bigl[
(\pi^{dca}_{dc})^*(z)\cdot 
(\pi^{dcba}_{dca})_! \bigl [
(\pi^{dcba}_{cb})^*(y)\cdot (\pi^{dcba}_{ba})^*(x) \bigr]
\Bigr ]  \qquad\text{(proj. form.)}\\
&=
\bigl(\pi^{dca}_{da}\bigr)_!  \Bigl[
(\pi^{dca}_{dc})^*(z)\cdot 
(\pi^{dcba}_{dca})_! (\pi^{dcba}_{cba})^*\bigl [
(\pi^{cba}_{cb})^*(y)\cdot (\pi^{cba}_{ba})^*(x) \bigr]
\Bigr ]  \\
&=
\bigl(\pi^{dca}_{da}\bigr)_!  \Bigl[
(\pi^{dca}_{dc})^*(z)\cdot 
(\pi^{dca}_{ca})^* (\pi^{cba}_{ca})_!\bigl [
(\pi^{cba}_{cb})^*(y)\cdot (\pi^{cba}_{ba})^*(x) \bigr]
\Bigr ]  \quad\text{(push-pull)}\\
&= z\cdot (y\cdot x).
\end{align*}
The first and third equalities just use functoriality.  For example,
in the third equality we use that
$\pi^{dbca}_{cb}=\pi^{cba}_{cb}\pi^{dcba}_{cba}$ and so forth.
We leave the reader to perform a similar series of steps to show that
$\Omega=(z\cdot y)\cdot x$.  This proves associativity, and so
finishes the proof of (a).

Part (b) is obvious.

For (c) we must show that if $f\colon a\ra b$ and $g\colon b\ra c$
then $R_g\circ R_f=R(gf)$.  That is, we must check the formula
\[ 
(\pi^{cba}_{ca})_!\Bigl [
(\pi^{cba}_{cb})^*(\id_c\times g)^*(i_c)\cdot
(\pi^{cba}_{ba})^*(\id_b\times f)^*(i_b)\Bigr ] = (\id_c\times
gf)^*(i_c).
\]
Note that the left side is $(\id_c\times g)^*(i_c)\circ (\id_b\times
f)^*(i_b)$.  

The first step is to use the two pullback squares 
\[\xymatrixcolsep{3.5pc}\xymatrix{
 c\times a \ar[d]_-{\pi_1\times f\times \pi_2} \ar[r]^{\pi_2} &
a \ar[d]^{f\times \id_a}\ar[r]^f & b\ar[d]^\Delta \\
c\times b\times a \ar[r]^{\pi^{cba}_{ba}} & b\times a
\ar[r]^{\id_b\times f} & b\times b
}
\]
to see that $(\pi^{cba}_{ba})^*(\id_b\times f)^*(i_b)=(\pi_1\times
f\times \pi_2)_!(1)$ (here we use that $\pi_2^*$ and $f^*$ are ring
maps and so send $1$ to $1$).  Next we compute that
\begin{align*}
R_g\circ R_f &=
(\pi^{cba}_{ca})_!\Bigl [
(\pi^{cba}_{cb})^*(\id_c\times g)^*(i_c)\cdot
(\pi^{cba}_{ba})^*(\id_b\times f)^*(i_b)\Bigr ] \\
&=
(\pi^{cba}_{ca})_!\Bigl [
(\pi^{cba}_{cb})^*(\id_c\times g)^*(i_c)\cdot 
(\pi_1\times f\times \pi_2)_!(1) \Bigr ]\\
&=(\pi^{cba}_{ca})_!(\pi_1\times f\times \pi_2)_!\Bigl [ (\pi_1\times
f\times \pi_2)^*(\pi^{cba}_{cb})^*(\id_c\times g)^*(i_c)\cdot 1\Bigr ]
\\
&= (\id_c\times gf)^*(i_c) \\
&= R(gf).
\end{align*}
In the second-to-last equality we have used that $\pi^{cba}_{ca}\circ
(\pi_1\times f\times \pi_2)=\id_{c\times a}$ and that
$(\id_c\times g)\pi^{cba}_{cb}(\pi_1\times f\times \pi_2)=\id_c\times
gf$.

To prove (d) 
we must verify that $i_a^*=i_a$ (for every
object $a$) and $(g\circ f)^*=f^*\circ g^*$ for every $f\in
\cC_E(a,b)$ and $g\in \cC_E(b,c)$. 
For the first of these, consider the twist map $t\colon a\times a\ra
a\times a$.
Since $t^2=\id_{a\times a}$ we have by Lemma~\ref{le:iso-shriek}
that $t_!=(t^*)^{-1}=t^*$.  So
\[ i_a^* = t^*(i_a)=t_!(i_a)=t_!(\Delta^a_!(1))=(t\circ
\Delta^a)_!(1)=\Delta^a_!(1)=i_a.\]

Write $t^{ab}_{ba}$ for the map $t\colon a\times b\ra b\times a$, and
similarly for other situations.  Then
\begin{align*} (g\circ f)^*&=
(t^{ac}_{ca})^*\Bigl[ (\pi^{cba}_{ca})_! [
(\pi^{cba}_{cb})^*(g)\cdot (\pi^{cba}_{ba})^*(f)]\Bigr ] \\
&=
(t^{ca}_{ac})_!\Bigl[ (\pi^{cba}_{ca})_! 
[
(\pi^{cba}_{cb})^*(g)\cdot (\pi^{cba}_{ba})^*(f)]\Bigr ] \\
&= (\pi^{cba}_{ac})_!\bigl [
(\pi^{cba}_{cb})^*(g)\cdot (\pi^{cba}_{ba})^*(f)\bigr ] \\
&= (\pi^{cba}_{ac})_! (t^{abc}_{cba})_! (t^{abc}_{cba})^* \bigl [
(\pi^{cba}_{cb})^*(g)\cdot (\pi^{cba}_{ba})^*(f)\bigr ] \\
&= (\pi^{abc}_{ac})_!   \bigl [
(\pi^{abc}_{cb})^*(g)\cdot (\pi^{abc}_{ba})^*(f)\bigr ] \\
&= (\pi^{abc}_{ac})_!   \bigl [
(\pi^{abc}_{bc})^*(t^{bc}_{cb})^*g\cdot
(\pi^{abc}_{ab})^*(t^{ab}_{ba})^*f\bigr ] \\
&= (\pi^{abc}_{ac})_!   \bigl [
(\pi^{abc}_{ab})^*(t^{ab}_{ba})^*f \cdot
(\pi^{abc}_{bc})^*(t^{bc}_{cb})^*g
\bigr ] \\
&= f^*\circ g^*.
\end{align*}
In the second and fourth equalities we have used
Lemma~\ref{le:iso-shriek}, but all of the other equalities use only
simple functoriality.

To prove the first part of (e) we argue as follows:
\begin{align*}
\alpha \circ R_f
& =
(\pi^{ZWY}_{ZY})_!\Bigl [ (\pi^{ZWY}_{ZW})^*(\alpha)
\cdot (\pi^{ZWY}_{WY})^*(\id_W\times f)^*(i_W)
\Bigr] \\
&= (\pi^{ZWY}_{ZY})_!\Bigl [ (\pi^{ZWY}_{ZW})^*(\alpha) \cdot
(\id_Z\times f\times \id_Y)_!(1) \Bigr] \\
&=(\pi^{ZWY}_{ZY})_! (\id_Z\times f\times \id_Y)_!\Bigl [
(\id_Z\times f\times \id_Y)^*(\pi^{ZWY}_{ZW})^*(\alpha) \cdot 1 \Bigr
] \\
&=\id_!\Bigl[ (\id_Z\times f)^*\alpha \cdot 1 \Bigr ]
\\
&= (\id_Z\times f)^*(\alpha).
\end{align*}
In the second equality we have used the push-pull axiom applied to the pullback diagram
\[ \xymatrixcolsep{3.5pc}\xymatrix{
Z\times W\times Y \ar[r]^{\pi^{ZWY}_{WY}} & W\times Y
\ar[r]^{\id_W\times f} & W\times
W \\
Z\times Y \ar[u]^{\id_Z\times f\times \id_Y} \ar[r]^{\pi_2} & Y
\ar[r]^f \ar[u]_{f\times\id_Y}& W.\ar[u]_\Delta
}
\]
In the third equality  we have used that $\pi^{ZWY}_{ZY}\circ (\id_Z\times
f\times \id_Y)=\id_{ZY}$ and $\pi^{ZWY}_{WY}\circ (\id_Z\times f\times
\id_Y)=\id_Z\times f$.  

The other parts of (e) are proven by similar arguments.  Part (f)
follows from (e) using
\[ I_f\circ R_q =I_f\circ i_B \circ R_q = (\id \times q)^*(I_f\circ
i_B)=(\id\times q)^*(f\times \id)^*(i_B)=(f\times q)^*(i_B).
\]  
The second part of (f) then follows using push-pull applied to the
square
\[ \xymatrix{
  A\times_B C \ar[r]\ar[d] & B\ar[d]^\Delta\\
 A\times C \ar[r]^{f\times q} & B\times B.
}
\]
Part (g) is similar to (f).

For (h), use that $R_p\circ I_g=\bigl( (p\times g)^Z_{XW}\bigr
)_!(1)=I_f\circ R_q$, by applying (f) and (g) together.  
Part (i) follows directly from (h) using the pullback diagram
\[ \xymatrix{
A \ar[r]^\id \ar[d]_\id & A \ar[d]^f \\
A \ar[r]^f & B.
}
\]
\end{proof}

%%%%%%%%%%%%%%%%%%%%%%%%%%%%%%%%%%%%%%%%%%%%%%%%%%%%%%%%%%%%%%%%%%%%%%%%%%%

%%%%%%%%%%%%%%%%%%%%%%%%%%%%%%%%%%%%%%%%%%%%%%%%%%%%%%%%%%%%%%%%%%%%%%%%%

\bibliographystyle{amsalpha}

\end{document}